\title{An existence result for the steady rotating Prandtl equation}
\date{March 2016}
\author{Anne-Laure \textsc{Dalibard} \& Matthew \textsc{Paddick}\footnote{Sorbonne Universit\'es, UPMC Univ Paris 06, UMR 7598, Laboratoire Jacques-Louis Lions, F-75005, Paris, France}~\footnote{CNRS, UMR 7598, Laboratoire Jacques-Louis Lions, F-75005, Paris, France}}
\begin{document}

\renewcommand{\labelitemi}{$\bullet$}
\newtheorem{theo}{Theorem}[section]
\newtheorem{lemma}[theo]{Lemma}
\newtheorem{propo}[theo]{Proposition}
\newtheorem{coro}[theo]{Corollary}
\newtheorem*{rmk}{Remark}
\newtheorem*{defi}{Definition}
\newtheorem*{nota}{Notations}

\newcommand{\preu}[1]{\textit{\underline{#1}}}
\newcommand{\thref}[1]{Theorem \ref{#1}}
\newcommand{\lemref}[1]{Lemma \ref{#1}}
\newcommand{\propref}[1]{Proposition \ref{#1}}
\newcommand{\cororef}[1]{Corollary \ref{#1}}

\newcommand{\rplus}{\mathbb{R}^{+}}
\renewcommand{\div}{\mathrm{div}~}
\newcommand{\curl}{\mathrm{curl}~}
\newcommand{\eps}{\varepsilon}
\newcommand{\norme}[2]{\left\| #2 \right\| _{#1}}
\newcommand{\derp}[1]{\partial_{#1}}
\newcommand{\dOmega}{\partial\Omega}
\newcommand{\Lip}{\mathrm{Lip}}
\newcommand{\mathand}[1]{~~\mathrm{#1}~~}
\newcommand{\vphi}{\varphi}
\newcommand{\bb}[1]{\mathbb{#1}}
\newcommand{\Rr}{\mathbb{R}}
\newcommand{\Ac}{{\cal A}}
\newcommand{\Cc}{{\cal C}}
\newcommand{\Dc}{{\cal D}}
\newcommand{\Lc}{{\cal L}}
\newcommand{\Mc}{{\cal M}}
\newcommand{\Nc}{{\cal N_\tau}}
\newcommand{\Oc}{{\cal O}}
\newcommand{\gap}{\vspace{10pt}}
\newcommand{\be}{\begin{equation}}
\newcommand{\ee}{\end{equation}}
\newcommand{\ba}{\begin{aligned}}
\newcommand{\ea}{\end{aligned}}
\newcommand{\p}{\partial}
\newcommand{\psbl}{\psi^{BL}}
\newcommand{\wme}{\underline{w}^-_\eps}

\maketitle

\begin{abstract}
We consider a steady, geophysical 2D fluid in a domain, and focus on its western boundary layer, which is formally governed by a variant of the Prandtl equation.
 By using the von Mises change of variables, we show that this equation is well-posed under the assumption that the trace of the interior stream function has large variations, and that the variations in the coastline profile are moderate.
\gap

{\bf Key words:} steady Prandtl equation, geophysical fluids, boundary layer theory
\end{abstract}

\section{Introduction}


The goal of this article is to prove the global existence of solutions of the stationary Prandtl-like equation
\begin{equation} \displaystyle \left\{ \begin{array}{rcl} \lambda_0\nu(y)(u\derp{\xi}v+v\derp{y}v) + \psi - \nu(y)^2\derp{\xi}^2 v & = & \psi^0(y) \\
(u,v) & = & \nabla^\bot \psi := (-\derp{y}\psi,\derp{\xi}\psi) \\
v|_{y=0} & = & v_0(\xi) \\
(u,v)|_{\xi=0} & = & 0 \\
\lim_{\xi\rightarrow +\infty} \psi(\xi,y) & = & \psi^0(y),  \end{array} \right. \label{pr} \end{equation}
in the domain $\Omega = \{(\xi,y)\in\Rr^2~|~y>0, \ \xi>0\}$. The function $\psi^0$ is given and smooth, $\lambda_0$ is a strictly positive parameter, and the function $\nu$ is a smooth function with $\nu\geq 1$.

This equation arises in geophysical models to describe the behaviour of western boundary oceanic currents in certain regimes.
 We describe the physical assumptions and scaling leading to \eqref{pr} in paragraph \ref{ssec:physique} after the statement of our main result, but let us merely mention that the role of the function $\nu$ is to take into account the geometry of the western coast.
  
Note that equation \eqref{pr} is similar to the stationary Prandtl equation, which is\footnote{Note that here $y$ is the tangential variable and $\xi$ is the rescaled normal variable,
 in contrast with the usual convention in the study of the Prandtl equation.
 We have made this choice to stick to the geophysical setting, in which $u$ is the East-West component of the velocity and $v$ its North-South component.}
\be\label{pr-usual}
 \displaystyle \left\{ \begin{array}{rcl} u\derp{\xi}v+v\derp{y}v  - \derp{\xi}^2 v & = & -\frac{dp_E}{dy} \\
(u,v) & = & \nabla^\bot \psi := (-\derp{y}\psi,\derp{\xi}\psi) \\
v|_{y=0} & = & v_0(\xi) \\
(u,v)|_{\xi=0} & = & 0 \\
\lim_{\xi\rightarrow +\infty} v(\xi,y) & = & v_E(y),  \end{array} \right.
\ee
where the functions $v_E$ and $p_E$ are given (they are the trace of some outer Euler flow) and satisfy $v_E v_E'=-\frac{dp_E}{dy} $. Notice that here we have $\nu(y)=1$, which corresponds to a flat boundary.
 The main differences between the usual Prandtl equation \eqref{pr-usual} and equation \eqref{pr} lie in the presence of the additional term $\psi$ in the equation
 (which is due to rotation, as we will explain in paragraph \ref{ssec:physique}) and in the condition at infinity, which bears on the stream function $\psi$ rather than on the velocity $v$.
 These two differences will result in significant changes in the analysis of equation \eqref{pr}.

The mathematical analysis of the stationary Prandtl equation \eqref{pr-usual} goes back to the seminal work of Ole\u{i}nik \cite{Oo63,OSbook},
 who proved that if $v_0$ is smooth such that $v_0\geq 0$, $v_0'(0)>0$ and $v_0(\xi)>0$ for $\xi>0$, then solutions of \eqref{pr-usual} exist, at least locally in $y$ and globally if $\frac{dp_E}{dy}\leq 0$.
 Her idea was to consider \eqref{pr-usual} as a non-local and non-linear evolution equation, with the variable $y$ playing the role of time.
 With this point of view, as long as $v$ remains non-negative, the equation has a parabolic structure, and is therefore locally well-posed.
 The method of proof relies on the use of a nonlinear change of variables due to von Mises, which transforms \eqref{pr-usual} into a local non-linear diffusion equation.

The aim of this paper is to adapt these ideas to equation \eqref{pr}, and to exhibit mathematical conditions on $\lambda_0, \nu, \psi^0$  that would ensure
 the existence and uniqueness of a solution such that $v$ remains positive. In the course of the proof, we will focus on the differences between equations \eqref{pr} and \eqref{pr-usual}.
 We will also discuss the physical meaning of our conditions.

\subsection{Short review of previous works on equation \eqref{pr}}

Some elements of analysis are given in the book by Pedlosky \cite{Pjbook}, following the analysis of Ierley and Ruehr \cite{IerleyRuehr}.
 In these works, it is assumed that $\nu(y)\equiv 1$ and that $\psi^0(y)=\alpha y+1$ for some $\alpha>0$ and that $\chi\equiv 0$ in order to simplify matters.
 In this case, it is natural to seek a solution in which the variables are separated, that is $\psi(\xi,y) = \phi(\xi)\psi^0(y)$ (see \cite{Pjbook}). Plugging this into (\ref{pr}), we get the ODE on $\phi$
\begin{equation} \phi''' = \lambda_0\alpha((\phi')^2-\phi\phi'')+\phi-1 \label{ode} . \end{equation}
The equation has an equilibrium at $\phi\equiv 1$, and we now examine the  linearised equation around this equilibrium, which reads
\be\label{linearized}
\varphi^{(3)}=- \lambda_0  \alpha \varphi'' + \varphi.
\ee
The solutions of this equation are of the form $\varphi=C\exp(-a\xi)$, with $a$ satisfying
\begin{equation} p_\alpha(a) := \lambda_0\alpha a^2 - a^3 - 1=0 \label{fpn} . \end{equation}
A quick study of the function $a\in \Rr\mapsto p_\alpha(a)$ shows that $p_\alpha$ always has exactly one negative root, and two strictly positive roots if and only if 
\begin{equation} \lambda_0\alpha > \sqrt[3]{\frac{27}{4}} \label{psizero} . \end{equation}
If $\lambda_0\alpha<\sqrt[3]{\frac{27}{4}}$, then $p_\alpha$ has two complex conjugate roots with positive real part and non-zero imaginary part. 
In the case where $\psi^0$ is affine, we therefore expect solutions of \eqref{ode} to behave as $\xi\to \infty$ as linear combinations of $\exp(-a_\pm \xi)$, where $a_{\pm}$ are the two roots of $p_\alpha$ with positive real part
 (putting aside the degenerate case $\lambda_0\alpha= \sqrt[3]{\frac{27}{4}}$).

At this stage, let us recall that we look for solutions such that $v>0$, or in other words, such that $\psi$ is strictly increasing with respect to $\xi$.
 This choice is made necessary by the Prandtl-like (or diffusion-like) structure of equation \eqref{pr}: indeed, the theoretical construction of solutions of \eqref{pr-usual}
 when $v$ takes negative values is widely open, even though numerical schemes relying on the theory of ``interactive boundary layers'' or ``triple deck solutions'' exist (see, for example, P-Y. Lagr\'ee \cite{Lpy}).
 Therefore we want to consider solutions of \eqref{ode} such that $\phi$ is increasing in $\xi$. The linearised equation suggests that a necessary condition is \eqref{psizero}:
 indeed, if $\lambda_0$ is small, then we expect the solutions in a neighbourhood of $\phi=1$ to behave like
$$
1 - C\cos (\Im(a_+)\xi) \exp(-\Re(a_+)\xi),
$$
and therefore to be non-monotonous.

This heuristic analysis is confirmed by the study of the case where $\lambda_0=0$ (linear) on one hand, and by the numerical simulations and analytic computations of Ierley and Ruehr \cite{IerleyRuehr}
 on the other. Indeed, in the linear case, the solution of \eqref{ode} endowed with the conditions $\phi(0)=\phi'(0)=0$ is exactly
$$
\phi(\xi)= 1 + \frac{4}{\sqrt{3}}\cos \left(\frac{\sqrt{3}}{2} \xi + \frac{\pi}{6} \right) \exp\left(-\frac{\xi}{2}\right).
$$ 
Hence it is easily checked that $\phi$ changes sign. In the case when $\lambda_0$ is small but non zero, the numerical simulations  and analytic computations of \cite{IerleyRuehr},
 confirmed by the later study of Smith and Mallier \cite{SmithMallier}, suggest that there exists a critical value of $\lambda$ under which $v$ may change sign, and above which $v$ remains positive.
 The purpose of this article is to give some theoretical foundation to these observations: we will indeed prove that if $\lambda_0$ is large enough, under suitable assumptions on $\psi^0$,
 then solutions of \eqref{pr} exist with $v>0$.

\subsection{Main result}

Throughout the article, the polynomial
$$
P_y(t)= \nu(y)^2 t^3 - \nu(y)\lambda_0 (\psi^0)'(y) t^2 +1,\quad t\in \Rr
$$
will play a fundamental role. Studying its variations closely, it can be checked that if $\lambda_0 (\psi^0)'(y)/\nu(y)^{1/3}>\sqrt[3]{\frac{27}{4}}$, $P_y$ has exactly two positive roots.
 We denote $a(y)$ the smallest of these roots, so that $P_y$ is decreasing in the vicinity of $a(y)$.

The existence result is the following.

\begin{theo} Let $\psi^0\in \mathcal C^2([0,Y])$ such that $\inf\psi^0>0,~ \inf \psi_0'>0$.  
 Assume that $\lambda_0 (\psi^0)'(y)/\nu(y)^{1/3} > 2+\eta>\sqrt[3]{\frac{27}{4}}$ for every $y\geq 0$ and for some $\eta>0$.
 Let $v_0 \in W^{2,\infty}(\rplus)$, positive on $]0,+\infty[$, with $v(0)=0$, $v_0'(0)>0$,
 such that $\int_0^{+\infty} v_0(\xi)~d\xi = \psi^0(0)$. We assume that $v_0$ satisfies the corner compatibility condition
 \begin{equation} \nu(0)^2   v_0''(\xi) + \psi^0(0) = \Oc (\xi^2)  \label{comppr} \end{equation}
 as $\xi \rightarrow 0$, and the decay constraint
 \be \label{comp-infty} v_0(\xi) \sim a(0) (\psi^0(0)-\psi_0(\xi)) \ee
 as $\xi\rightarrow +\infty$, in which $\psi_0(\xi)=\int_0^\xi v_0(s)~ds$ is the initial stream function.

 Then, for every $Y>0$, there exists a unique classical Lipschitz solution to the steady rotating Prandtl equation
 \begin{equation} \displaystyle \left\{ \begin{array}{rcl} \lambda_0\nu(y)\left[ u \p_{\xi } v + v \p_{y} v  \right] + \psbl - \nu(y)^2\p_\xi^2 v & = & \psi^0(y) \\
(u,v) & = & \nabla^\bot \psi \\
v|_{y=0} & = & v_0(\xi) \\
(u,v)|_{\xi=0} & = & 0 \\
\lim_{\xi\rightarrow +\infty} \psi(\xi,y) & = & \psi^0(y),  \end{array} \right. \label{full-pr} \end{equation}
 with $v>0$ in $\Omega_Y$. Furthermore, $v$ satisfies the following properties.
 \begin{itemize}
 \item Behaviour close to $\xi=0$: for some $\xi_0,~m>0$, depending on $Y$, we have $\derp{\xi}v>m$ for $\xi\leq \xi_0$.
 \item Behaviour close to $\xi=\infty$: for every $y$, $v(\xi,y)\sim a(y) (\psi^0(y)-\psi(\xi, y))$ as $\xi\to \infty$.
\end{itemize}
 
 \label{exist}
\end{theo}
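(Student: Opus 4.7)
Our strategy follows Ole\u{\i}nik: apply the von Mises change of variables to reduce \eqref{full-pr} to a single quasilinear parabolic equation in which $y$ plays the role of time. Since we seek $v>0$, the map $\xi\mapsto \psi(\xi,y)$ is an increasing diffeomorphism from $[0,+\infty)$ onto $[0,\psi^0(y))$ for each fixed $y$. Setting $w(\psi,y):=v^2(\xi(\psi,y),y)$ and using the identities $\p_\xi v=\tfrac12 \p_\psi w$, $\p_\xi^2 v = \tfrac{\sqrt w}{2}\p_\psi^2 w$, and $u\p_\xi v+v\p_y v=\tfrac12 \p_y w$ (the cancellation in the transport term being the crux of the method), equation \eqref{full-pr} becomes
\be\label{vM}
\p_y w-\frac{\nu(y)}{\lambda_0}\sqrt{w}\,\p_\psi^2 w=\frac{2}{\lambda_0\nu(y)}\bigl(\psi^0(y)-\psi\bigr)
\ee
on the $y$-dependent domain $\{0<\psi<\psi^0(y),\,0<y<Y\}$, with $w(0,y)=0$, $w(\psi^0(y),y)=0$ and $w(\psi,0)=v_0^2(\xi_0(\psi))$, where $\xi_0$ is the inverse of $\xi\mapsto \psi_0(\xi)$. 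The rotation term has been absorbed into the source and no longer interferes with the parabolic structure, but it resurfaces through the Dirichlet condition at the moving right endpoint $\psi^0(y)$.

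Second, I would construct $w$ by approximation. After flattening the domain via $\tilde\psi=\psi/\psi^0(y)$ (at the cost of additional convective terms involving $(\psi^0)'$), I would regularise the degeneracy by replacing $\sqrt w$ with $\sqrt{w+\eps}$ and smoothing the boundary data, yielding uniformly parabolic problems whose classical theory produces smooth nonnegative solutions $w^\eps$. The compatibility condition \eqref{comppr} is exactly what is needed to ensure $C^2$ regularity at the corner $(\psi,y)=(0,0)$. Uniform $L^\infty$ bounds and interior Schauder estimates then extract a limit $w$ satisfying \eqref{vM}, from which the Lipschitz pair $(u,v)$ solving \eqref{full-pr} is recovered by inversion of the von Mises map. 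The lower bound $\p_\xi v>m$ near $\xi=0$ follows from $v_0'(0)>0$ and a Hopf-type lemma applied to $w$ at $\psi=0$, where \eqref{vM} is strictly parabolic by virtue of the positivity of the source $\tfrac{2}{\lambda_0\nu}\psi^0(y)>0$; uniqueness is obtained from an $L^2$ estimate on the difference of two solutions of \eqref{vM} against a suitable weight.

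The main obstacle is to control $w$ near the right boundary $\psi=\psi^0(y)$, i.e.\ as $\xi\to+\infty$. I would seek barriers of the form $c(y)^2(\psi^0(y)-\psi)^2$: substituting into \eqref{vM} and identifying the leading order in $\psi^0-\psi$ yields the cubic
\be\label{bar-cubic}
\nu(y)^2 c(y)^3-\lambda_0\nu(y)(\psi^0)'(y)\,c(y)^2+1=0,
\ee
which is precisely $P_y(c)=0$. The standing hypothesis $\lambda_0(\psi^0)'/\nu^{1/3}>2+\eta>\sqrt[3]{27/4}$ guarantees that this cubic has two positive roots separated by a gap uniform in $y$, the smaller being $a(y)$, and $P_y$ is strictly decreasing across $a(y)$; one can then construct strict sub- and super-barriers $\underline w,\overline w$ asymptotic to $a(y)^2(\psi^0(y)-\psi)^2$ and propagate the asymptotics \eqref{comp-infty} from $y=0$ to all $y\in[0,Y]$ via the comparison principle. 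The delicate point, which I expect to drive the bulk of the technical work, is making these barriers strict despite the simultaneous vanishing of $\sqrt w$ and of the source $(\psi^0-\psi)$ at $\psi=\psi^0(y)$: the quantitative gap $\eta>0$ must be exploited through next-order corrections, using the strict monotonicity of $P_y$ near $a(y)$ to ensure the perturbation of $c(y)^2(\psi^0-\psi)^2$ by lower-order terms keeps $\underline w$ and $\overline w$ on the correct side of the equation.
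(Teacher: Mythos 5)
Your overall route is the paper's: von Mises transform to $\lambda_0\nu\p_y w-\nu^2\sqrt w\,\p_\psi^2 w=2(\psi^0-\psi)$, approximation by non-degenerate problems, and quadratic barriers $c(y)^2(\psi^0(y)-\psi)^2$ at the right boundary whose leading coefficient is pinned down by the cubic $P_y$, propagated by comparison. But there is a genuine gap: you never obtain estimates on the \emph{derivatives} of $w$, and this is where the real work and the hypothesis $\lambda_0(\psi^0)'\nu^{-1/3}>2+\eta$ actually enter. Contrary to what you write, the condition $>2+\eta$ is not needed to get ``two positive roots separated by a uniform gap'' (the weaker bound $>\sqrt[3]{27/4}$ already gives that); it is needed when one bounds $p^\eps=\p_\psi w^\eps$ near $\psi=\psi^0(y)$ by barriers $-E^\pm(y)(\psi^0-\psi-\eps)$: the sub-barrier requires a value $E^-$ with $2a(y)^2<E^-<2/(a(y)\nu(y)^2)$, i.e. $a(y)<\nu(y)^{-2/3}$, which is equivalent to $\lambda_0(\psi^0)'(y)\nu(y)^{-1/3}>2$. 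Without this gradient bound, and without the companion estimate $|\p_y w|=\Oc(\psi^0(y)-\psi)$ near the right boundary (obtained in the paper via a maximum principle for $r^\eps=-\p_y w^\eps/\p_\psi w^\eps$) and $|\p_y w|=\Oc(\psi^{1-\beta})$ near the left, you cannot (i) pass to the limit with a classical Lipschitz solution, nor (ii) invert the von Mises map: recovering $u$ requires the convergence of $\int_0^{\psi}\p_y w\,w^{-3/2}$, which hinges precisely on these decay rates at both degenerate boundaries. Your proposal stops at $L^\infty$ barriers and thus omits the step that makes the theorem (and its threshold) what it is.

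Two smaller but real problems. First, your argument for $\p_\xi v>m$ near $\xi=0$ invokes a Hopf-type lemma ``where the equation is strictly parabolic by positivity of the source'': positivity of the source has nothing to do with parabolicity, and at $\psi=0$ the diffusion coefficient $\sqrt w$ vanishes, so Hopf's lemma does not apply directly; the correct mechanism (as in Ole\u{\i}nik and in the paper) is an explicit sub-solution of the form $w^\eps_l+\underline A\psi^{4/3}+\underline B\psi$ giving a linear lower bound on $w$ at $\psi=0$. Second, your uniqueness sketch (``an $L^2$ estimate against a suitable weight'') glosses over the fact that the coefficient $(\sqrt{w_1}+\sqrt{w_2})^{-1}\p_\psi^2 w_2$ in the difference equation is unbounded at both degenerate boundaries; the paper has to work in a class where $\sqrt{w_i}\,\p_\psi^2 w_i$ is bounded and compare $|w_1-w_2|$ with a crude super-solution decaying like $(\psi^0-\psi)^{5/6}$, truncating the bad coefficient near the boundaries. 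Finally, note that the refined right-boundary barriers can only be made to work on $y$-intervals of uniform length, so the propagation from $y=0$ to $[0,Y]$ requires the global ``blanket'' bounds plus an iteration in $y$, a structural point your sketch does not anticipate.
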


\begin{rmk}
\begin{itemize}
\item The condition on the lower bound of $\lambda_0 (\psi^0)' \nu^{-1/3}$ is more stringent than the one suggested by the analysis of the linearised equation \eqref{linearized}.
 We will see that this condition stems from the necessity of having a good control on the derivatives of $v$ (see Proposition \ref{alterd}).

\item Conditions \eqref{comppr} and \eqref{comp-infty} are compatibility conditions on the initial data close to $\xi=0$ and $\xi=\infty$. Our result shows that these properties are propagated by the equation.
 Notice also that the decay condition at infinity is the one that is expected from the analysis of the linearised equation \eqref{linearized}, and it implies rapid (exponential) decrease in the variable $\xi$.
 This last fact is explained in the proof of \cororef{equivcoro}.

 \item Notice that our theorem requires both $\psi^0$ and $(\psi^0)'$ to be strictly positive. This is consistent with the analysis of \cite{Pjbook} in which $\psi^0$ is linear with a positive slope.
  Once again, we believe that such an assumption is necessary to obtain the positivity of $v$. Indeed, the important quantity is really $\lambda_0 (\psi^0)'$,
  so that having a negative $(\psi^0)'$ and a positive $\lambda_0$ is more or less equivalent to having a positive $(\psi^0)'$ and a negative $\lambda_0$,
  and the work of Ierley and Ruehr \cite{IerleyRuehr} gives numerical and analytic evidence that when $\lambda_0$ is negative and $\psi^0$ is linear, with $\nu\equiv 1$, the function $v$ takes negative values.
 
 \end{itemize}
 We will make further remarks on the physical meaning of our conditions in the next paragraph.

\end{rmk}

Looking for a positive solution to the problem implies, by definition of the stream function, that $\psi$ is monotonous, increasing in the variable $\xi$.
 Thus, the von Mises transform, used originally on the ordinary steady Prandtl equation (\ref{pr-usual}) by O. Ole\u{i}nik \cite{Oo63}, fits the setting we have introduced.
 The change turns the unknown function $\psi$ into an independent variable, and the Prandtl equation becomes a PDE for $w(\psi,y)=v^2(\psi(\xi,y),y)$ on the finite domain
 $$ D = \{(\psi,y)\in \Rr^2 ~|~ 0<y<Y \mathand{and} 0<\psi<\psi^0(y) \} . $$
 The parabolic equation we obtain is degenerate, in the sense that the square root of the unknown $w$ appears as a factor in the diffusive term,
 and this function is assumed to vanish on the left- and right-hand boundaries of $D$.
 An approximate problem is considered, and we construct explicit super- and sub-solutions in order to get uniform Lipschitz bounds,
 allowing us to get a solution of the degenerate problem by taking the limit of these approximate solutions.

The proof given in sections 2-4  follows the plan of O. Ole\u{i}nik and V. Samokhin \cite{OSbook}, and what happens near the left-hand boundary of $D$, the set
 $$ \{(\psi,y)~|~0\leq y\leq Y \mathand{and} \psi=0\} , $$
 is identical. \textit{The new difficulty for our case is the presence of a right-hand boundary,
 $$ \{(\psi,y)~|~0\leq y\leq Y~~\text{and}~~ \psi = \psi^0(y)\} , $$
 and having to control what happens there.} Indeed, we need $v$ to be exponentially decreasing in $\xi$, and $\psi$ itself should converge exponentially towards $\psi^0$; we therefore expect $w(\psi,y)$ to be
 $\Oc((\psi^0-\psi)^2)$. The link between these behaviours is shown below.

\subsection{Physical derivation of equation \eqref{pr}}
\label{ssec:physique}

The starting point is the 3D homogeneous, incompressible Navier-Stokes-Coriolis equations with anisotropic viscosity, which describe the motion of oceanic currents at mid-latitudes on large horizontal scales
 ($100-1000 \; \mathrm{km}$). In an appropriate scaling (fast rotation, thin layer domain, small vertical viscosity), it can be proved that the fluid behaves in the limit like a 2D fluid;
 we refer for instance to the study by Desjardins and Grenier \cite{DG}. The equation describing the motion then becomes
\be\label{NS2d}\begin{aligned}
(\p_t +  \bar u_1 \p_x +  \bar u_2\p_y) \left(\lambda\zeta + \beta y + \eta_B\right) =\beta \curl \tau + \frac{1}{Re}\Delta \zeta - \frac{r_0}{2}\zeta, \quad \text{in } (0,\infty)\times \Omega\\
\p_x \bar u_1 +\p_y \bar u_2=0 , \quad \text{in } (0,\infty)\times \Omega
\end{aligned}
\ee
where
\begin{itemize}
\item $\zeta= \curl (\bar u_1,\bar u_2)$ is the vorticity;
\item $\Omega$ is a two-dimensional domain representing the oceanic basin;
\item $\beta>0$ is a parameter characterising the beta-plane approximation (linearisation of the Coriolis factor around a given latitude);
\item $- \frac{r_0}{2}\zeta$ is the Ekman pumping term due to friction on the bottom, and $\beta \curl \tau$ is the Ekman pumping term due to the wind stress at the surface ($\tau $ is a given function);
\item $\eta_B$ describes the variations of topography of the bottom;
\item $Re$ is the Reynolds number.
\end{itemize}
Equation \eqref{NS2d} is endowed with no-slip boundary conditions:
$$
\bar u_{|\p \Omega}=0.
$$
Let us assume that the domain $\Omega$ is of the form
$$
\Omega=\{(x,y)\in \Rr^2,\ \chi(y)<x<\sigma (y), \ 0<y<Y\},
$$
where the functions $\chi$ and $\sigma$ are given and smooth. We now look at the asymptotic behavior of equation \eqref{NS2d} in the limit 
$$
\lambda\ll 1, \quad r_0\ll 1,\quad Re\gg 1. 
$$
In order to simplify the analysis, we assume that the bottom is flat, so that $\eta_B=0$. We look for stationary approximate solutions of \eqref{NS2d}.
 In the limit $\lambda, r_0, (Re)^{-1}\to 0$, we expect the solution in the interior to satisfy the Sverdrup relation,
$$
\beta  u_2^{int}= \beta \curl \tau ,~~ \text{i.e.}~~ \p_x  \psi^{int}= \curl \tau,
$$
where the stream function $\psi^{int}$ is defined by $u^{int}= \nabla^\bot \psi^{int}$.
 However, such a function cannot satisfy all the boundary conditions: it is well-known that a dissymmetry occurs between western and eastern boundaries
 (see \cite{Pjbook,DG} and the further analysis in \cite{DSR}) and that $\psi^{int}$ should vanish on the eastern boundary, so
$$
\psi^{int}(x,y)=-\int_{x}^{\sigma(y)} \curl \tau.
$$
However, in general $\psi^{int}$ does not vanish on the western boundary. Therefore a boundary layer is created close to the western coastline,
 in order to satisfy the no-slip boundary conditions $\psi_{|\p \Omega}=0, ~ \p_n \psi_{|\p\Omega}=0$. The nature of this boundary layer depends on the sizes of the parameters $\lambda, Re, r_0$.
 Notice that in the stationary case, equation \eqref{NS2d} can be re-written as
\be\label{eq:psi}
\lambda \nabla^\bot \psi \cdot \nabla \Delta \psi + \beta \p_x \psi + \frac{r_0}{2} \Delta \psi - \frac{1}{Re} \Delta^2 \psi= \beta \curl \tau.
\ee
An extensive discussion around the sizes of the parameters and the corresponding governing equations for the western boundary layer can be found in \cite{Pjbook}.
 So far, linear Munk layers, which formally correspond to the case $r_0\ll (Re)^{-1/3}\ll 1$, $\lambda \ll (Re)^{-2/3}\ll 1$ have been thoroughly studied (see for instance \cite{BC,DSR};
 we also refer to \cite{BGV} for the study of a non-linear version of this problem when $\lambda \sim (Re)^{-1}$ in the presence of a rough boundary; in this setting, the boundary layer equation remains elliptic,
 which will not be the case in our study). Stommel layers, which correspond to the case $(Re)^{-1}\ll r_0^3\ll 1$, $\lambda\ll r_0^2\ll 1$, have  been studied in \cite{DG};
 we also refer to \cite{BCT}, in which Barcilon, Constantin and Titi prove the existence of weak solutions and of stationary solutions of \eqref{NS2d} (in the vorticity formulation)
 in the regime $Re=+\infty$, $r_0\ll 1$.

\textit{Here, we want to study the influence of the nonlinear term in the case of a smooth western boundary.} The correct scaling to retain the nonlinear term is therefore $\lambda \varpropto (Re)^{-2/3}$.
 We therefore take $r_0=0$, $\beta=1$ and $\lambda=\lambda_0 (Re)^{-2/3}$ for some parameter $\lambda_0>0$ which we will keep throughout our study.
 A formal analysis shows that the boundary layer size is expected to be $(Re)^{-1/3}$. Hence we plug an ansatz of the form $\psi=\psi^{BL}((x-\chi(y))(Re)^{1/3}, y)$ into \eqref{eq:psi},
 so that, when $Re\rightarrow +\infty$,
\begin{eqnarray*}
\Delta \psi & \sim & (Re)^{2/3}(1+ \chi'(y)^2) \p_{\xi}^2 \psbl ((x-\chi(y))(Re)^{1/3}, y),\\ \Delta^2 \psi & \sim & (Re)^{4/3}(1+ \chi'(y)^2)^2 \p_{\xi}^4 \psbl ((x-\chi(y))(Re)^{1/3}, y).
\end{eqnarray*}
We find that the equation satisfied by $\psi^{BL}=\psi^{BL}(\xi, y)$ is
$$
\lambda_0 (1+\chi'(y)^2)\left[\p_{\xi} \psbl \psi \p_y \p_{\xi}^2 \psbl -  \p_y \psbl \p_{\xi}^3 \psbl \right] + \p_{\xi}\psbl - (1+\chi'(y)^2)^2\p_\xi^4 \psbl=0,
$$
with no-slip boundary conditions $\psbl_{|\xi=0}=\p_{\xi} \psbl_{|\xi=0}=0$. In the limit $\xi\to \infty$, i.e. at the end of the boundary layer,
 $\psbl\to \psi^0(y):= -\int_{\chi(y)}^{\sigma(y)} \curl \tau(x,y)\:dx$. Noticing that
$$
\p_{\xi} \psbl \psi \p_y \p_{\xi}^2 \psbl -  \p_y \psbl \p_{\xi}^3 \psbl=\p_{\xi}\left[\p_\xi \psbl \p_y\p_\xi \psbl - \p_y \psbl \p_{\xi}^2 \psbl\right],
$$
we infer that the equation becomes
$$
\lambda_0 (1+\chi'(y)^2)\left[\p_\xi \psbl \p_y\p_\xi \psbl - \p_y \psbl \p_{\xi}^2 \psbl \right] + \psbl - (1+\chi'(y)^2)^2\p_\xi^3 \psbl=\psi^0(y).
$$
Now, setting $(u,v)=(-\p_y \psbl, \p_{\xi} \psbl)$ (the scaled velocity in the boundary layer), we obtain
\be\label{pr-chi}
\lambda_0 (1+\chi'(y)^2)\left[ u \p_{\xi } v + v \p_{y} v  \right] + \psbl - (1+\chi'(y)^2)^2\p_\xi^2 v =\psi^0(y),
\ee
with the boundary conditions
\begin{eqnarray*}
v_{|\xi=0} = u_{|\xi=0} = \psbl_{|\xi=0} & = & 0,\\
\lim_{\xi\to \infty} \psbl(\xi,y) & = & \psi^0(y),\\
\lim_{\xi\to \infty} v(\xi,y) & = & 0.\end{eqnarray*}
Dropping the $BL$ superscript in $\psbl$, we obtain equation \eqref{pr} with $\nu(y):= (1+\chi'(y)^2)$. \textbf{The function $\nu $ therefore describes the geometry of the coastline.}

\begin{rmk}[Physical meaning of the conditions in Theorem \ref{exist}] $ $
\begin{itemize}
\item The condition $\lambda_0 (\psi^0)'(y)/\nu(y)^{1/3} > 2+\eta$  fails as $\chi'$ becomes infinite, for example when $y$ reaches the South and North extremities of the ocean basin.
 But it is well known that even the derivation of classical linear Munk layers fails in this case, and that the nature and the size of the boundary layer changes in the vicinity of these points (see \cite{DSR}).
\textbf{ Our result also shows that a boundary layer separation could be caused by a sudden change in the coastline},
 since the quantity $\lambda_0 (\psi^0)'\nu^{-1/3}$ would dip below the critical threshold which ensures that there is no recirculation.

\item From a physical point of view, the sign of $\psi^0$ is related to the direction of the rotation of winds and currents in the underlying gyre, since $\psi^0$ is the integral of $\curl \tau$.
 The positivity of $(\psi^0)'$ means that the rotation of winds is stronger at higher latitudes. The size of $\lambda_0$ is related to the intensity of the non-linear effects compared to the rotation.
 Therefore our result could be summarized as: ``in a rotating gyre, if the intensity of inertial forces is strong enough and the coastline has slow variations, there is no recirculation,
 and no separation phenomenon in the western boundary layer''.

\item We have formulated our result on a bounded interval $(0,Y)$, keeping in mind the physical picture of an ocean basin between two fixed latitudes, but from a mathematical point of view,
 we could have assumed that $Y=+\infty$, thereby constructing global solutions. Notice also that because of the condition on $\nu$, the $y$-interval is actually smaller than that of the actual ocean basin:
 we must exclude the zone close to the North and South extremities.
\end{itemize}
\end{rmk}

%
%
%
%

\section{Reformulation of the equation and statements in von Mises variables}

\label{ssec:transfo}

Fix $Y>0$ for the rest of the paper. Starting with equation (\ref{full-pr}), we follow \cite{OSbook} and make the von Mises change of variables,
\be\label{cgt-var} (\xi,y) \mapsto (\psi(\xi,y),y) , \ee
and choose a new unknown function, $w=w(\psi,y)$ such that $w(\psi, y) = v^2(\psi(\xi,y),y)$. The chain rule allows us to express the derivatives: we have
\begin{equation}
\begin{aligned}
\frac{\p v}{\p \xi}= \frac{1}{2}\frac{\p w}{\p\psi},\quad \frac{\p^2 v}{\p \xi^2}=\frac{\sqrt{w}}{2}\frac{\p^2 w}{\p \psi^2}, \\
\frac{\p v}{\p y}=\frac{1}{2\sqrt{w}}\frac{\p w}{\p y} + \frac{1}{2\sqrt{w}}\frac{\p w}{\p \psi}\frac{\p \psi}{\p y}.
\end{aligned} \label{chainrule}
\end{equation}
 Therefore the equation on $w$ is the degenerate parabolic PDE
\begin{equation} \left\{ \begin{array}{rcl} \lambda_0 \nu(y)\derp{y}w - \nu(y)^2\sqrt{w}\derp{\psi}^2 w & = & 2(\psi^0(y)-\psi) \\
w|_{y=0} & = & w_0(\psi) \\
w|_{\psi=0} = w|_{\psi=\psi^0(y)} & = & 0 \end{array} \right. \label{vm} \end{equation}
on the domain $D$, which, we recall, is
$$ D = \{(\psi,y)\in \Rr^2 ~|~ 0<y<Y \mathand{and} 0<\psi<\psi^0(y) \} . $$
The corner compatibility condition for the initial datum $w_0$ is deduced from (\ref{comppr}),
\begin{equation} \nu(0)^2\sqrt{w_0(\psi)}w_0''(\psi) + 2(\psi^0(0)-\psi) = \Oc(\psi)\quad \text{for } \psi\ll 1 . \label{compvml} \end{equation}
Concerning the behaviour of $w$ as  $\psi\rightarrow \psi^0(y)$, we have
\begin{equation} w_0(\psi) \sim a(0)^2 (\psi^0(0)-\psi)^2 \quad \text{as } \psi \to \psi^0(0) . \label{compvmr} \end{equation}

The goal of the next  sections is to prove the two following results: the first one deals with existence and uniqueness of solutions of \eqref{vm}, together with some qualitative properties.
\begin{propo} Let $\psi^0\in \mathcal C^2([0,Y])$ be a positive increasing function. Assume that $\lambda_0 (\psi^0)'(y) \nu(y)^{-1/3}> 2+\eta>\sqrt[3]{\frac{27}{4}}$ for every $y\geq 0$ and for some $\eta>0$.
 Let $w_0:[0,\psi^0(0)]\to \Rr_+$ be a $\mathcal{C}^2$ function satisfying \eqref{compvml} and \eqref{compvmr}.

Then there exists a unique classical solution $w$ of equation \eqref{vm} such that $w$ is strictly positive in the interior of $D$. Furthermore, $w$ satisfies the following properties.
\begin{itemize}
\item Behaviour near $\psi=0$: $\p_\psi w(y,0)>0$ for all $y\in [0,Y]$ and there exists $\beta \in (0,1/2)$, $C>0$ such that
$$
|\p_y w(\psi, y)|\leq C \psi^{1-\beta} 
$$
in a neighbourhood of $\psi=0$.
\item Boundedness of the derivatives: $\p_\psi w$ and $\p_y w$ are bounded in $\bar D$.
\item Behaviour near $\psi=\psi^0(y)$: as $\psi\to \psi^0(y)$,
$$
w(\psi,y)\sim a(y)^2 \left( \psi^0(y)-\psi\right)^2,
$$
and there exist $\Cc^1$ functions $E^\pm(y)>0$ and a constant $C_0>0$  such that
\begin{eqnarray*} -E^-(y)(\psi^0(y)-\psi) \leq & \p_\psi w(\psi,y) & \leq -E^+(y)(\psi^0(y)-\psi)  \\
\text{and}~ & |\p_y w(\psi,y)| & \leq C_0(\psi^0(y)-\psi)
\end{eqnarray*}
in a neighbourhood of the boundary $\{\psi = \psi^0(y)\}$.
\end{itemize}

\label{prop:exvm}

\end{propo}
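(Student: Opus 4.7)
The plan is to follow the strategy of Ole\u{i}nik and Samokhin for the classical stationary Prandtl equation in von Mises coordinates, adapting it to the presence of a right-hand boundary $\{\psi=\psi^0(y)\}$ at which $w$ must also vanish, and in fact vanish quadratically with a prescribed rate. First, I would regularise \eqref{vm} by replacing $\sqrt{w}$ with $\sqrt{w+\varepsilon^2}$ and perturbing the Dirichlet data by $\varepsilon$, so that on each side of the parabolic boundary the data stays bounded away from zero. This yields a uniformly parabolic quasilinear Cauchy--Dirichlet problem on $D$, for which classical theory provides a unique smooth solution $w_\varepsilon$. The heart of the matter is then to obtain estimates on $w_\varepsilon$ that are uniform in $\varepsilon$ and to pass to the limit.

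The uniform estimates come from suitable global and local sub- and super-solutions. A global $L^\infty$ bound comes from the maximum principle applied to constants adjusted to the data. Near the left boundary $\{\psi=0\}$, the standard Ole\u{i}nik--Samokhin barriers of the form $C\psi$ and $c\psi$ work verbatim, since the equation and the compatibility condition \eqref{compvml} are essentially unchanged there; the Hopf lemma then yields $\partial_\psi w_\varepsilon(0,y)>0$ uniformly. The delicate new point is the right-hand boundary: I would seek barriers of the shape
\[
\underline W(\psi,y)=A^-(y)\bigl(\psi^0(y)-\psi\bigr)^2,\qquad \overline W(\psi,y)=A^+(y)\bigl(\psi^0(y)-\psi\bigr)^2,
\]
valid in a strip $\{\psi^0(y)-\delta<\psi<\psi^0(y)\}$. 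Plugging such an ansatz into the operator
$\lambda_0\nu(y)\partial_y w-\nu(y)^2\sqrt{w}\,\partial_\psi^2 w$ and expanding in powers of $(\psi^0-\psi)$, the leading term is
\[
2(\psi^0-\psi)\Bigl(\lambda_0\nu(y)(\psi^0)'(y)\,A-\nu(y)^2 A^{3/2}\Bigr),
\]
which matches the right-hand side $2(\psi^0-\psi)$ precisely when $\nu(y)^2 A^{3/2}-\lambda_0\nu(y)(\psi^0)'(y)A+1=0$, i.e.\ $P_y(\sqrt{A})=0$. The assumption $\lambda_0(\psi^0)'(y)/\nu(y)^{1/3}>2+\eta>\sqrt[3]{27/4}$ guarantees that $P_y$ has two positive roots and that its derivative is bounded away from zero at the smaller root $a(y)$; hence one can pick smooth $A^\pm(y)$ with $A^-(y)<a(y)^2<A^+(y)$ for which the corresponding strict inequality holds, and absorb the remainder terms coming from $A'(y)$ and $(\psi^0-\psi)^2$ into the sign. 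Compatibility condition \eqref{compvmr} ensures that the barriers can be made to straddle $w_0$ at $y=0$, and on the outer line $\psi=\psi^0(y)-\delta$ one closes the comparison using the global $L^\infty$ bound together with a small $\varepsilon$-correction. The same barriers, now differentiated, yield the estimates $-E^-(y)(\psi^0-\psi)\le\partial_\psi w_\varepsilon\le -E^+(y)(\psi^0-\psi)$ and $|\partial_y w_\varepsilon|\le C_0(\psi^0-\psi)$ near the right boundary, and a Bernstein-type argument on the equation differentiated in $\psi$ (as in Ole\u{i}nik--Samokhin) upgrades these into the bound $|\partial_y w_\varepsilon|\le C\psi^{1-\beta}$ near $\psi=0$ and global Lipschitz bounds on $w_\varepsilon$.

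Once these estimates are in hand, I would pass to the limit $\varepsilon\to 0$ by Ascoli--Arzel\`a plus interior parabolic regularity (the equation is uniformly parabolic on any compact subset of $D$ where $w_\varepsilon\ge c>0$, which is ensured by the sub-solutions). The limit $w$ is a classical solution of \eqref{vm} in the interior of $D$, attains the boundary data continuously, and inherits all the barrier bounds, giving in particular the asymptotics $w(\psi,y)\sim a(y)^2(\psi^0(y)-\psi)^2$ as $\psi\to\psi^0(y)$ since $A^\pm$ can be chosen arbitrarily close to $a^2$. Uniqueness is obtained by a standard comparison argument: given two solutions $w_1,w_2$ with the same data, form $w_1-w_2$ and use that both solutions are bounded below by the same positive local sub-solutions away from $\{\psi=0\}\cup\{\psi=\psi^0(y)\}$, so that the equation satisfied by $w_1-w_2$ is uniformly parabolic in the interior and the degenerate coefficient $\sqrt{w}$ causes no issue at the boundary thanks to the quadratic vanishing on the right and the Hopf condition on the left.

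The main obstacle is unquestionably the construction and uniform control of the right-boundary barriers. Unlike the left boundary, where the simple linear barriers of Ole\u{i}nik--Samokhin suffice, the right boundary forces a quadratic vanishing whose prefactor is fixed by the algebraic relation $P_y(a)=0$, and the validity of the barriers along the whole interval $[0,Y]$ is exactly what demands the strict threshold condition $\lambda_0(\psi^0)'(y)/\nu(y)^{1/3}>2+\eta$; propagating these local barriers and the induced derivative bounds uniformly in $\varepsilon$ up to the corner $(\psi,y)=(\psi^0(0),0)$, while remaining compatible with the initial data via \eqref{compvmr}, is the most technical part of the argument.
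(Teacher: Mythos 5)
Your overall strategy (approximate problem, barriers built on the roots of $P_y$, comparison principle, passage to the limit) is the same family of argument as the paper's, but two of your steps would fail as written. The first is the closure of the comparison for the quadratic right-boundary barriers. The admissible amplitudes are constrained by the sign of $P_y$: for the upper barrier you need $-P_y(\sqrt{A^+})\geq 0$, so $\sqrt{A^+}$ must lie between the two positive roots of $P_y$, hence $A^+$ is bounded, and $\delta$ must be taken small to absorb the $(A^+)'(y)(\psi^0-\psi)^2$ remainder. Consequently, on the inner edge $\{\psi=\psi^0(y)-\delta\}$ the barrier has size $A^+\delta^2$, which is small, while the only a priori information on $w_\varepsilon$ there is the global bound $\overline{M}=O(\|w_0\|_\infty+Y)$; so the comparison cannot be closed ``using the global $L^\infty$ bound'' as you claim. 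The paper resolves this by gluing the quadratic profile to an order-one profile in the middle zone through a partition of unity, and, because the transition terms can only be dominated on a $y$-interval of fixed length, by iterating the refined construction on successive intervals $[Y_k,Y_{k+1}]$ (Propositions \ref{prop:blankets} and \ref{altersols} and the ensuing induction). Symmetrically, the lower quadratic barrier needs a strictly positive lower bound for $w_\varepsilon$ on $\{\psi=\psi^0(y)-\delta\}$, which does not follow from the $L^\infty$ bound but from a global ``blanket'' sub-solution that already contains a right-zone piece with small amplitude. Some device of this kind is unavoidable and is absent from your sketch.

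Second, the derivative estimates near $\{\psi=\psi^0(y)\}$ do not follow ``by differentiating the barriers'': a two-sided bound on $w_\varepsilon$ gives no pointwise control of $\partial_\psi w_\varepsilon$ or $\partial_y w_\varepsilon$. The paper writes the equation satisfied by $p^\varepsilon=\partial_\psi w^\varepsilon$ and constructs barriers $-E^\pm(y)(\psi^0-\psi-\varepsilon)$ for it (Proposition \ref{lem:p-eps}); the relevant algebra involves the polynomial $Q_y(E)=\frac{\nu^2}{2a}E^2-\lambda_0\nu(\psi^0)'E+2$, whose roots are $2a^2$ and $2/(a\nu^2)$, and the requirement $2a(y)^2<2/(a(y)\nu(y)^2)$ is precisely where the hypothesis $\lambda_0(\psi^0)'\nu^{-1/3}>2+\eta$ enters. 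Your proposal attributes this threshold to the propagation of the $L^\infty$ barriers, which in fact only need $\sqrt[3]{27/4}$; misplacing it hides the step that genuinely requires it. Likewise, the bound $|\partial_y w^\varepsilon|\leq C_0(\psi^0-\psi-\varepsilon)$ needs a separate idea: the paper studies the quotient $r^\varepsilon=-\partial_y w^\varepsilon/\partial_\psi w^\varepsilon$, which satisfies an equation with maximum and minimum principles in zone $III$ (Proposition \ref{alterd}); the Ole\u{\i}nik--Samokhin/Bernstein arguments you invoke only cover the regions away from the right boundary. Finally, in the uniqueness step the zeroth-order coefficient $(\sqrt{w_1}+\sqrt{w_2})^{-1}\partial_\psi^2 w_2$ is unbounded at both boundaries, so ``the degenerate coefficient causes no issue at the boundary'' is not an argument; the paper truncates this coefficient and uses a deliberately crude super-solution decaying like $(\psi^0-\psi)^{5/6}$ to absorb the resulting error (Proposition \ref{prop:uniqueness-vm}).
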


The second result ensures the existence of a solution to the rotating Prandtl equation \eqref{full-pr}.
\begin{coro}
Assume that there exists a solution of \eqref{vm} with the properties listed in Proposition \ref{prop:exvm}. Define $v=v(\xi,y)$ through the inverse change of variables to \eqref{cgt-var},
 and $u$ as the $y$-derivative of the stream function $\psi$. Then $(u,v,\psi)$ solve \eqref{full-pr} with the properties mentioned in \thref{exist}.
\label{equivcoro}
\end{coro}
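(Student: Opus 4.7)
The plan is to invert the von Mises change of variables and then verify by chain rule that the resulting $(u,v,\psi)$ solves \eqref{full-pr}. First, for each fixed $y\in[0,Y]$, the assumption $w>0$ in the interior of $D$ together with $\partial_\psi w(0,y)>0$ gives $w(\psi',y)\sim c(y)\psi'$ near $\psi'=0$, so $1/\sqrt{w(\cdot,y)}$ is integrable at the origin. I define
\[
\xi(\psi,y)=\int_0^\psi\frac{d\psi'}{\sqrt{w(\psi',y)}}.
\]
The asymptotic $w(\psi',y)\sim a(y)^2(\psi^0(y)-\psi')^2$ from Proposition \ref{prop:exvm} forces the integrand to behave like $(a(y)(\psi^0(y)-\psi'))^{-1}$ at the right endpoint, so the integral diverges there and $\xi(\cdot,y)$ is a $\mathcal{C}^1$ bijection from $[0,\psi^0(y))$ onto $[0,+\infty)$. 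Denote by $\psi(\xi,y)$ its inverse and set $v(\xi,y)=\sqrt{w(\psi(\xi,y),y)}$, $u(\xi,y)=-\partial_y\psi(\xi,y)$. By construction $\partial_\xi\psi=v$, so $(u,v)=\nabla^\bot\psi$.

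Second, plugging the chain-rule identities \eqref{chainrule}, namely $\partial_\xi v=\tfrac12\partial_\psi w$, $\partial_\xi^2 v=\tfrac12\sqrt{w}\,\partial_\psi^2 w$ and $\partial_y v=(2\sqrt{w})^{-1}(\partial_y w+\partial_\psi w\,\partial_y\psi)$, together with $u=-\partial_y\psi$ and $v=\sqrt{w}$, into the left-hand side of \eqref{full-pr} causes the two terms proportional to $\partial_y\psi$ to cancel, leaving $\tfrac12(\lambda_0\nu\partial_y w-\nu^2\sqrt{w}\,\partial_\psi^2 w)+\psi$, which equals $\psi^0(y)$ by \eqref{vm}. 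The boundary conditions follow directly from the definitions: $\xi(0,y)=0$ forces $\psi(0,y)=0$, hence $v|_{\xi=0}=\sqrt{w(0,y)}=0$ and $u|_{\xi=0}=-\partial_y\psi(0,y)=0$; the limit $\psi(\xi,y)\to\psi^0(y)$ as $\xi\to+\infty$ is built into the inversion; and the computation $\xi(\psi,0)=\psi_0^{-1}(\psi)$ (via the substitution $\psi'=\psi_0(s)$) gives $v(\xi,0)=v_0(\xi)$. Uniqueness of the Prandtl solution with $v>0$ reduces to that of Proposition \ref{prop:exvm}, since any such solution transforms, via von Mises, into a solution of \eqref{vm}.

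Third, the qualitative properties announced in \thref{exist} follow at once. The identity $\partial_\xi v=\tfrac12\partial_\psi w(\psi(\xi,y),y)$ combined with the strict positivity and continuity of $\partial_\psi w$ on $\{\psi=0\}\cap\bar D$ provided by Proposition \ref{prop:exvm} gives a uniform lower bound $\partial_\xi v\geq m>0$ on some strip $\{\xi\leq\xi_0\}$. At the other end, $v(\xi,y)=\sqrt{w(\psi(\xi,y),y)}\sim a(y)(\psi^0(y)-\psi(\xi,y))$, which is exactly the stated asymptotic; integrating the ODE $\partial_\xi\psi=v$ then gives $\psi^0(y)-\psi(\xi,y)\sim C(y)e^{-a(y)\xi}$, explaining the exponential decrease mentioned in the remarks.

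The main obstacle is to justify the existence, continuity and boundedness of $u=-\partial_y\psi$ up to the two boundaries $\{\psi=0\}$ and $\{\psi=\psi^0(y)\}$. Differentiating the defining integral in $y$ at fixed $\xi$ via the implicit function theorem yields
\[
\partial_y\psi(\xi,y)=\sqrt{w(\psi,y)}\int_0^{\psi(\xi,y)}\frac{\partial_y w(\psi',y)}{2\,w(\psi',y)^{3/2}}\,d\psi'.
\]
Near $\psi'=0$, the integrand is of size $\psi'^{-1/2-\beta}$ by $|\partial_y w|\leq C\psi'^{1-\beta}$ and $w\sim c\psi'$, and is integrable precisely because Proposition \ref{prop:exvm} supplies some $\beta<1/2$. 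Near $\psi'=\psi^0(y)$, the estimates $|\partial_y w|\leq C_0(\psi^0-\psi')$ and $w\gtrsim(\psi^0-\psi')^2$ make the integrand of order $(\psi^0-\psi')^{-2}$, but this is compensated by the prefactor $\sqrt{w(\psi,y)}\lesssim\psi^0(y)-\psi$, keeping $u$ bounded as $\xi\to+\infty$. Combined with the uniform bound on $\partial_\psi w$ in $\bar D$, this yields the Lipschitz regularity of $v$ required by \thref{exist}.
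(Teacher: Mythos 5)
Your proof is correct and follows essentially the same route as the paper: invert the von Mises map as in \eqref{reverse-cgt-var}, verify the equation through the chain-rule identities \eqref{chainrule}, and control $u$ via the formula \eqref{defu} using the boundary behaviours supplied by Proposition \ref{prop:exvm}. If anything, your treatment of the right-hand boundary is the more careful one: with $|\p_y w|\leq C_0(\psi^0-s)$ and $w\sim a^2(\psi^0-s)^2$ the integrand in \eqref{defu} is $\Oc((\psi^0(y)-s)^{-2})$ (not $(\psi^0(y)-s)^{-1/2}$ as asserted in the paper), so the integral grows like $(\psi^0(y)-\psi)^{-1}$ and the boundedness of $u$ indeed comes, as you argue, from the compensating prefactor $\sqrt{w}=\Oc(\psi^0(y)-\psi)$.
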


We will show the uniqueness part of \thref{exist} in section \ref{ssec:conclu}, after proving uniqueness for equation (\ref{vm}).

\begin{proof}[Proof of \cororef{equivcoro}]
The reverse change of variables is
\begin{equation} (\xi,y) = \left(\int_0^{\psi(\xi,y)} \frac{1}{\sqrt{w(s,y)}}~ds , y\right) . \label{reverse-cgt-var} \end{equation}
 We first notice that, given the behaviour of $w$ near $s=0$ and $s=+\infty$, the above integral is convergent for $\xi < +\infty$,
 and that the limit $\psi\rightarrow \psi^0(y)$ indeed corresponds to $\xi\rightarrow +\infty$.
\gap

Differentiating (\ref{reverse-cgt-var}) with respect to $\psi$, we notice that $\derp{\psi}\xi = 1/\sqrt{w(\psi,y)}$, which implies that $\derp{\xi}\psi = v$.
 The boundary and compatibility conditions are obviously satisfied, but let us explain why the decay condition (\ref{comp-infty}) implies
 that $v(\xi,y)$ is exponentially decreasing in $\xi$. For a fixed $y$ and a small $\delta>0$, there exist $0<c<a(y)<C$ such that for $\psi^0(y)-\delta<s<\psi^0(y)$,
 $$ c(\psi^0(y)-s) \leq \sqrt{w(s,y)} \leq C(\psi^0(y)-s) . $$
 Integrating from $\psi^0(y)-\delta$ to $\psi(\xi,y)$, we get that $\xi$ is logarithmic in $\psi$, and we obtain
 $$ K_\delta e^{-C\xi} \leq \psi^0(y)-\psi(\xi,y) \leq K_\delta e^{-c\xi}. $$
 Inserting this in the equivalent for $v$ as $\psi\rightarrow \psi^0(y)$, we see the rapid decrease property.
 \gap

Differentiating the expression of $\xi$ in (\ref{reverse-cgt-var}) with respect to $y$, we obtain a formula for $-\derp{y}\psi=u$:
\begin{equation} u(\xi,y) = -\derp{y}\psi = -\frac{1}{2} \sqrt{w} \int_{0}^{\psi(\xi,y)} \frac{\derp{y}w(s,y)}{w^{3/2}(s,y)}~ds. \label{defu} \end{equation}
With this, we can determine the limits as $\xi\rightarrow 0$ and $+\infty$ of $u$. Indeed, on one hand, we have the fact that for $0<s\ll 1$,
$$w^{-3/2}(s,\psi)\derp{y}w(s,\psi) = \Oc( s^{-3/2}\psi^{1-\beta})= \Oc(s^{-1/2-\beta}),$$ and $1/2+\beta < 1$.

On the other hand, by Proposition \ref{prop:exvm}, as $\xi\rightarrow +\infty$, we have $\derp{y}w(s,y) = \Oc( (\psi^0(y)-s))$ and $w = \Oc( (\psi^0(y)-s)^2)$, so
$$ \frac{\derp{y}w(s,y)}{w^{3/2}(s,y)} = \mathop{\Oc} ((\psi^0(y)-s)^{-1/2})\quad \text{as }s\rightarrow \psi^0(y). $$
In both cases, the integral in (\ref{defu}) is shown to be convergent for any $\psi$, and is therefore bounded. The factor $\sqrt{w}$ then shows that $u$ satisfies the zero boundary condition at $\xi=0$
 and the limit condition at $\xi\rightarrow+\infty$. With the identities in (\ref{chainrule}), we then see that $(u,v,\psi)$ solve the Prandtl equation in a classical way.
\end{proof}

The next sections are devoted to the proof of Proposition \ref{prop:exvm}. We first recall some notation and useful results. 
In the rest of the article, we will call ``left-hand boundary'' the boundary $\{\psi=0\}$, and ``right-hand boundary'' the boundary $\{\psi=\psi^0(y)\}$.
 We will strongly rely on the fact that the functional map in equation (\ref{vm}),
$$ \Lc: f \mapsto \lambda_0 \nu(y)\derp{y}f - \nu(y)^2\sqrt{f} \derp{\psi}^2 f, $$
has maximum and comparison principles that we state below, providing $f$ does not vanish.
 We send the reader to Ole\u{i}nik \& Samokhin for the proof (it uses a standard contradiction method on the assumption of an interior extremum).

\begin{lemma}[\cite{OSbook}, Lemmata 2.1.2 and 2.1.3]
Let $\Dc$ be an open subset of $\Rr^2$ of the form
$$ \Dc = \{(\psi,y)~|~ 0<y<Y ~~\text{and}~~ L(y)<\psi<R(y)\}, $$
and $\Gamma$ be the parabolic boundary of $\Dc$,
$$ \Gamma = \{(\psi,y)\in\overline{\Dc}~|~\psi=L(y) ~~\text{or}~~ y=0 ~~ \text{or}~~ \psi = R(y)\}. $$
Consider a differential operator of the form
$$
\Ac(w)=\p_y w + b(y,w)\p_\psi w
 + c(\psi,y)w - d(y,w) \p_{\psi}^2 w,$$
in which $d(y,w), b(y,w), c(\psi,y)$ are bounded for $(\psi,y)\in \overline{\Dc}$, and $w$ bounded, $d$ and $b$ have bounded partial derivatives with respect to $w$,
 and $d$ takes strictly positive values. Let $f$ and $g$ be continuous functions in $\overline{\Dc}$ such that their derivatives occurring in $\Ac$ are continuous in $\overline{\Dc}\setminus \Gamma.$

Assume that
$$ \Ac(f)\geq \Ac(g) ~\text{in}~ \Dc ~~\text{and}~~ f\geq g ~\text{on}~ \Gamma. $$

Then $f\geq g$ in $\overline{\Dc}$.
\label{comparl}
\end{lemma}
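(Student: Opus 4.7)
The plan is the classical nonlinear weak maximum principle argument: set $h := f - g$ and argue by contradiction that a negative interior minimum of $h$ leads to a violation of the differential inequality derived from $\Ac(f) \geq \Ac(g)$.

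First I would linearise the assumption $\Ac(f) \geq \Ac(g)$ in the unknown $h$. Applying the mean value theorem to the maps $w \mapsto b(y,w)$ and $w \mapsto d(y,w)$, there exist intermediate values $\xi_1, \xi_2$ between $f$ and $g$ for which
\begin{equation*}
\partial_y h + \tilde b\,\partial_\psi h + \tilde c\, h - \tilde d\,\partial_\psi^2 h \geq 0 \quad \text{on } \Dc \setminus \Gamma,
\end{equation*}
with $\tilde b := b(y,f)$ and $\tilde d := d(y,f) > 0$ bounded on $\overline{\Dc}$ by the hypotheses, and
$$
\tilde c := c(\psi,y) + b_w(y,\xi_1)\,\partial_\psi g - d_w(y,\xi_2)\,\partial_\psi^2 g
$$
continuous on $\overline{\Dc} \setminus \Gamma$. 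To control the sign of the zeroth-order coefficient I would perform the exponential substitution $H := e^{-\lambda y} h$ for a parameter $\lambda > 0$ to be chosen, which produces an inequality of the same form with the zeroth-order coefficient replaced by $\tilde c + \lambda$; the boundary condition $H \geq 0$ on $\Gamma$ is preserved.

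Now assume for contradiction that $\inf_{\overline{\Dc}} H < 0$. Since $\overline{\Dc}$ is compact and $H$ continuous, this infimum is attained at some point $(\psi_*, y_*) \in \overline{\Dc}$; the boundary condition $H|_\Gamma \geq 0$ forces $(\psi_*, y_*) \in \overline{\Dc} \setminus \Gamma$, so in particular $\tilde c(\psi_*, y_*)$ is finite and the derivatives of $H$ are well-defined there. At a global minimum one has $\partial_\psi H = 0$, $\partial_\psi^2 H \geq 0$, and $\partial_y H \leq 0$ (vanishing in the interior $y_* \in (0,Y)$, one-sided at $y_* = Y$). Substituting into the inequality collapses it to
$$
\bigl(\tilde c(\psi_*, y_*) + \lambda\bigr)\,H(\psi_*, y_*) \geq 0,
$$
which contradicts $H(\psi_*, y_*) < 0$ as soon as $\tilde c(\psi_*, y_*) + \lambda > 0$.

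The main obstacle is the choice of $\lambda$: the coefficient $\tilde c$ may blow up near $\Gamma$ through the derivatives of $g$, so it cannot be dominated globally a priori. I would circumvent this by a localisation step: when $\inf h < 0$, the closed sublevel set $\{h \leq \tfrac{1}{2}\inf h\}$ is compact, non-empty, and disjoint from $\Gamma$ (since $h|_\Gamma \geq 0 > \tfrac{1}{2}\inf h$), hence is a compact subset $K \subset \overline{\Dc}\setminus \Gamma$ on which $\tilde c$ is bounded by some $C_0$. Setting $\lambda := C_0 + 1$ and verifying, via a direct comparison of $H(\psi_*, y_*)$ with $H(\psi_0, y_0)$ at a minimiser $(\psi_0, y_0)$ of $h$, that the minimum of $H$ remains inside $K$ (or alternatively using a local barrier near a minimiser of $h$) yields the contradiction. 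The proof in \cite{OSbook} carries out this localisation in detail; the uniqueness statement then follows by applying the comparison to the pairs $(f,g)$ and $(g,f)$.
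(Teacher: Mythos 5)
Your skeleton is exactly the one the paper has in mind: the paper does not prove this lemma at all, but refers to \cite{OSbook} and describes the proof as ``a standard contradiction method on the assumption of an interior extremum'', which is what you set up. The linearisation of $\Ac(f)-\Ac(g)$ by the mean value theorem, the exponential tilt $H=e^{-\lambda y}h$, the exclusion of $\Gamma$ by the boundary hypothesis, and the one-sided treatment of $y_*=Y$ are all correct and standard.

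The gap is in the only nontrivial step, which you correctly identify but do not actually close. The coefficient $\tilde c = c + b_w(y,\xi_1)\p_\psi g - d_w(y,\xi_2)\p_\psi^2 g$ is only locally bounded on $\overline{\Dc}\setminus\Gamma$, and your localisation requires the minimiser $(\psi_*,y_*)$ of $H$ to lie in $K=\{h\le \tfrac12 \inf h\}$, where $C_0$ (hence $\lambda=C_0+1$) was computed. But the ``direct comparison'' only gives $h(\psi_*,y_*)\le e^{\lambda(y_*-y_0)}\inf h$, which fails to place $(\psi_*,y_*)$ in $K$ whenever $y_*<y_0-\ln 2/\lambda$; and precisely because $\lambda$ is large, the weight $e^{-\lambda y}$ can move the minimiser of $H$ towards small $y$, where $h$ is only slightly negative and $\tilde c$ is no longer controlled by $C_0$. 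So the contradiction $(\tilde c+\lambda)H(\psi_*,y_*)\ge 0$ does not follow as written, and the circular dependence ($K\to C_0\to\lambda\to$ location of the minimiser of $H$) is exactly where the argument breaks; appealing to \cite{OSbook} for ``the localisation in detail'' leaves this step unproven. Two remarks that would let you repair or sidestep it: first, you are free to distribute the mean-value remainders so that $\tilde c$ involves the derivatives of whichever of the two functions is better behaved (write $d(y,f)\p_\psi^2 f - d(y,g)\p_\psi^2 g = d(y,g)\p_\psi^2 h + (d(y,f)-d(y,g))\p_\psi^2 f$, and similarly for $b$); second, in every application of the lemma in this paper one of the two compared functions is an explicit barrier, or a solution of the nondegenerate approximate problem, whose first and second $\psi$-derivatives are bounded up to the boundary, so that $\tilde c$ is in fact globally bounded and a single choice $\lambda>\sup|\tilde c|$ finishes the proof without any localisation. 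As stated, with derivatives merely continuous off $\Gamma$, your argument is incomplete at this point.
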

\begin{rmk}
Actually, in the version of the Lemma stated in \cite{OSbook}, the function $b$ does not depend on $w$ and the function $d$ does not depend on $y$. 
 However, it can be easily checked that the proof of \cite{OSbook} can be immediately extended to the setting of Lemma \ref{comparl}.
\end{rmk}

We will construct a solution to problem (\ref{vm}) by considering a sequence of approximate solutions with strictly positive boundary data.

In the unbounded domain $\rplus\times(0,Y)$, like in the classical Prandtl equation, one would simply shift the initial data slightly to the left.
 In our case, we must also account for the right-hand border. What we do is we truncate the domain.
\gap
 
From now on, we assume that the condition $\lambda_0 (\psi^0)'\nu^{-1/3}>\sqrt[3]{27/4}$ is satisfied, so that the function $a(y)$, the first positive root of the polynomial $P_y$, is well-defined.
 Let $\eps>0$ be small. We consider the approximate domain
 $$ D^\eps = \{(\psi,y)\in\Rr^2 ~|~ 0 < y < Y \mathand{and} 0 < \psi < \psi^0(y)-2\eps \}, $$
 its  parabolic boundary 
 $$ \Gamma^\eps = \{(\psi,y)\in \overline{D^\eps}~|~ \psi = 0 \mathand{or} y=0 \mathand{or} \psi = \psi^0(y)-2\eps\} $$
  and the approximate boundary-value problem
\begin{equation} \left\{ \begin{array}{rcl} \lambda_0 \nu(y)\derp{y} w^\eps -\nu(y)^2\sqrt{w^\eps}\derp{\psi}^2 w^\eps & = & 2(\psi^0(y)-\psi-\eps) \\
w^\eps|_{y=0} & = & w_0(\eps+\psi) \\
w^\eps|_{\psi=0} & = & w_0(\eps)\exp\left(\frac{\mu(\eps)y}{w_0(\eps)}\right) \\
w^\eps|_{\psi=\psi^0(y)-2\eps} & = & w_0\left(\psi^0(0) - \eps\right) \frac{a(y)^2}{a(0)^2} , \end{array} \right. \label{vma} \end{equation}
where $\mu(s) = \nu(0)^2\sqrt{w_0(s)}w_0''(s)+2(\psi^0(0)-s)$, which is $\Oc (s)$ when $s\rightarrow 0$ by (\ref{compvml}).
 The right-hand boundary condition replicates our expectation, which stems from the study of ODE (\ref{ode}), that $w^\eps(\psi,y)$ will resemble $a(y)^2 (\psi^0(y)-\psi-\eps)^2$.
  Let us denote the approximate boundary data
\begin{eqnarray*}
 w^\eps_l(y) & := & w_0(\eps)\exp\left(\frac{\mu(\eps)y}{w_0(\eps)}\right),\\
\text{and}~~ w^\eps_r(y) & := & w_0\left(\psi^0(0) - \eps\right) \frac{a(y)^2}{a(0)^2} .\end{eqnarray*}

The goal of the next section will be to prove the following uniform bounds on solutions of the approximate equations.

\begin{propo}
There exist constants $\delta, ~ C_0>0$ such that, for every $0\leq y\leq Y$:
\begin{itemize}
\item near the left-hand boundary, i.e. in the zone $0<\psi<\delta$, there exist $A^\pm >0$ and $B^+>B^->0$, such that for $\eps>0$ small enough,
\begin{eqnarray*}
 w_l^\eps(y) + A^- \psi^{4/3} + B^-\psi \leq & w^\eps(\psi,y) &  w_l^\eps(y) +  + B^+\psi-A^+ \psi^{4/3} , \\
 C_0^{-1} \leq & \derp{\psi}w^\eps (\psi,y) & \leq C_0 , \\
  & \left| \p_y w^\eps(\psi,y)\right| & \leq C_0 ; \end{eqnarray*}

\item near the right-hand boundary, i.e. in the zone $ \psi^0(y) -\delta <\psi <\psi^0(y)-2\eps$, there exist $\Cc^1$ functions $C^\pm(y)$ with $0<C^-(y)<a(y)<C^+(y)$, $E^\pm(y)>0$, such that for $\eps>0$ small enough,
\begin{multline}\label{est:rhb}
 C^-(y)^2 (\psi^0(y) -\psi-\eps)^2+ w_r^\eps(y) - C^-(y)^2 \eps^2  \leq w^\eps(\psi,y)\\\leq  C^+(y)^2 (\psi^0(y) -\psi-\eps)^2 + w_r^\eps(y) - C^+(y)^2 \eps^2
\end{multline}
and
\begin{eqnarray*}
 - E^-(y)\left( \psi^0(y) -\psi-\eps\right) \leq & \p_\psi w^\eps(\psi,y) & \leq - E^+(y)\left( \psi^0(y) -\psi-\eps\right) ,\\
  -C_0\left( \psi^0(y) -\psi-\eps\right) \leq & \p_y w^\eps(\psi,y) & \leq C_0\left( \psi^0(y) -\psi-\eps\right); \end{eqnarray*}

\item and in the middle zone $\delta \leq \psi \leq \psi^0(y) -\delta $,
$$
\ba 
C_0^{-1}\leq w^\eps(\psi,y)\leq C_0,\\
|\p_y w^\eps(\psi,y)|,\ |\p_\psi w^\eps(\psi,y)|\leq C_0.
\ea
$$

\end{itemize}

All the parameters defined in the proposition depend on $Y$.
\end{propo}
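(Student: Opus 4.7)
I work throughout with the operator
$\Lc(\phi) := \lambda_0 \nu(y) \p_y \phi - \nu(y)^2 \sqrt{\phi}\, \p_\psi^2 \phi$
and apply \lemref{comparl}, whose hypotheses hold on $D^\eps$ because the approximate data $w_l^\eps, w_r^\eps$ are strictly positive, keeping $\sqrt{w^\eps}$ bounded away from $0$ near $\Gamma^\eps$. In each of the three zones I construct explicit sub- and super-solutions $\phi_\pm$ of $\Lc$ that match $w^\eps$ on the lateral parts of $\Gamma^\eps$ (i.e.\ $\psi = 0$ or $\psi = \psi^0(y) - 2\eps$); Hopf-type comparison of slopes on those boundaries then yields the $\p_\psi w^\eps$ estimates, while the $\p_y w^\eps$ estimates are obtained by differentiating \eqref{vma} in $y$ and reapplying \lemref{comparl} to the resulting linear parabolic equation.

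\textbf{Left-hand strip $0 \leq \psi \leq \delta$.} Following Ole\u{\i}nik--Samokhin \cite{OSbook}, I take
$$\phi_\pm(\psi, y) \;=\; w_l^\eps(y) + B^\pm \psi \mp A^\pm \psi^{4/3}, \qquad A^\pm, B^\pm > 0,$$
so that $\phi_\pm|_{\psi=0} = w_l^\eps(y)$ exactly. A direct computation gives
$$\Lc(\phi_\pm) \;=\; \lambda_0 \nu(y)\,(w_l^\eps)'(y) \;\pm\; \tfrac{4}{9}\,\nu(y)^2 A^\pm\, \psi^{-2/3}\sqrt{\phi_\pm}.$$
For $A^\pm$ sufficiently large, the singular second term dominates the bounded right-hand side $2(\psi^0 - \psi - \eps)$ in the correct direction, making $\phi_+$ a supersolution and $\phi_-$ a subsolution; the constants $B^\pm$ bracket a preliminary rough Lipschitz estimate on $\p_\psi w^\eps$. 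The exponent $4/3$ is chosen so that $\p_\psi \phi_\pm = B^\pm \mp \tfrac{4}{3} A^\pm \psi^{1/3}$ extends continuously to $\psi = 0$ with value $B^\pm$, giving $B^- \leq \p_\psi w^\eps|_{\psi = 0} \leq B^+$ by Hopf comparison.

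\textbf{Right-hand strip.} This is the principal novelty and the main obstacle. The expected behaviour $w \sim a(y)^2(\psi^0-\psi)^2$ suggests
$$\phi_\pm(\psi, y) \;=\; w_r^\eps(y) + C^\pm(y)^2\bigl[(\psi^0(y) - \psi - \eps)^2 - \eps^2\bigr],$$
which ensures $\phi_\pm = w_r^\eps(y)$ at $\psi = \psi^0(y) - 2\eps$. Setting $s := \psi^0(y) - \psi - \eps \in [\eps, \delta]$ and Taylor-expanding $\sqrt{\phi_\pm} = C^\pm s + \Oc(\eps^2/s)$ (bounded precisely because the truncation enforces $s \geq \eps$), a direct computation yields
$$\Lc(\phi_\pm) - 2s \;=\; -2\, P_y\bigl(C^\pm(y)\bigr)\, s \;+\; R_\pm,$$
with $R_\pm = \Oc(s^2) + \Oc(\eps)$ collecting contributions from $(C^\pm)'$, $(\psi^0)'$, $\p_y w_r^\eps = \Oc(\eps^2)$, and the $\sqrt{\phi_\pm}$-expansion. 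The hypothesis $\lambda_0 (\psi^0)'(y)/\nu(y)^{1/3} > 2 + \eta$ ensures that $a(y)$ is a simple root of $P_y$ with $P_y'(a(y)) < 0$ uniformly in $y$, so $\Cc^1$ functions $C^-(y) < a(y) < C^+(y)$ taken sufficiently close to $a$ give $-P_y(C^-) < 0 < -P_y(C^+)$ with a uniform gap. Shrinking $\delta$ then absorbs $R_\pm$, comparison yields \eqref{est:rhb}, and since $\p_\psi \phi_\pm|_{\psi = \psi^0(y) - 2\eps} = -2 (C^\pm(y))^2 \eps$, Hopf comparison at this boundary provides the $\p_\psi w^\eps$ bounds with $E^\pm(y) = 2(C^\mp(y))^2$. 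The main difficulty is the bookkeeping of $R_\pm$: without the truncation $s \geq \eps$, the expansion of $\sqrt{\phi_\pm}$ would contain a $1/s$ singularity that could not be absorbed by the leading-order term.

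\textbf{Middle zone and $y$-derivatives.} In $\delta \leq \psi \leq \psi^0(y) - \delta$, the two boundary-strip estimates propagate a uniform positive lower bound on $w^\eps$ independent of $\eps$, so \eqref{vma} is uniformly parabolic there; the supersolution $\|w_0\|_\infty + K y$ for $K$ large delivers the $L^\infty$ bound, and standard interior parabolic regularity yields the uniform Lipschitz bounds. For the linear-in-$s$ control of $\p_y w^\eps$ near the right-hand boundary, I differentiate \eqref{vma} in $y$ to obtain a linear parabolic equation for $\p_y w^\eps$ with source $-2(\psi^0)'(y)$ and bounded coefficients (the potentially singular coefficient $\nu^2 \p_\psi^2 w^\eps /(2\sqrt{w^\eps})$ is controlled because $w^\eps > 0$ in $D^\eps$), and compare against the explicit barrier $\pm C_0 (\psi^0(y) - \psi - \eps)$; the compatibility condition \eqref{compvmr} guarantees this linear behaviour at $y = 0$. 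The analogous scheme on the left delivers the bound $|\p_y w^\eps| \leq C_0$.
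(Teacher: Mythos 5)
Your overall philosophy (comparison barriers $\sim\psi^{4/3}$ on the left, $\sim(\psi^0-\psi-\eps)^2$ on the right, with the sign of $P_y(C^\pm)$ driving the right-hand construction) matches the paper, but three essential steps are missing or would fail as written. First, your zone-by-zone comparison is not closed: if you compare on a strip such as $\{\psi^0(y)-\delta<\psi<\psi^0(y)-2\eps\}$, its parabolic boundary contains the interior curve $\{\psi=\psi^0(y)-\delta\}$, where neither $\phi_\pm$ nor $w^\eps$ is a priori ordered; and your claim that the middle-zone lower bound on $w^\eps$ is ``propagated'' from the strip estimates is circular, since that lower bound is exactly what is needed to order the barriers on the inner lateral boundaries. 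The paper resolves this with a single global construction glued by a partition of unity (the ``blanket'' sub-/super-solutions of Proposition \ref{prop:blankets}), where the transition terms $T$ are absorbed by a large factor $\alpha_0$; for the refined right-hand bounds this absorption only works on $y$-intervals of length $y_1=\ln 2/\alpha$, which forces the induction over $[Y_k,Y_{k+1}]$ using the blanket bounds as a priori control. This local-in-$y$ obstruction and its iteration are precisely the new difficulty of the problem, and they are absent from your proposal.

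Second, the derivative bounds near the right boundary cannot be obtained the way you suggest. Hopf-type comparison of slopes at $\psi=\psi^0(y)-2\eps$ only controls $\p_\psi w^\eps$ \emph{at} that boundary, not the two-sided bound $-E^-(y)(\psi^0-\psi-\eps)\leq \p_\psi w^\eps\leq -E^+(y)(\psi^0-\psi-\eps)$ throughout the strip; the paper instead builds barriers $\pi^\pm$ for the equation satisfied by $p^\eps=\p_\psi w^\eps$, and the admissibility of $E^-$ hinges on the polynomial $Q_y(E)=\frac{\nu^2}{2a}E^2-\lambda_0\nu(\psi^0)'E+2$ having $2/(a\nu^2)>2a^2$, i.e.\ on the strengthened hypothesis $\lambda_0(\psi^0)'\nu^{-1/3}>2$ — a condition your argument never uses, although it is the very reason it appears in Theorem \ref{exist}. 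Third, for $q^\eps=\p_y w^\eps$ near the right boundary, differentiating \eqref{vma} in $y$ does \emph{not} give a linear equation with bounded coefficients: the $q^\eps$-equation contains the quadratic term $-\frac{\lambda_0\nu}{2w^\eps}(q^\eps)^2$ and the factor $1/w^\eps\sim (\psi^0-\psi-\eps)^{-2}$, which is not uniformly bounded in $\eps$; positivity of $w^\eps$ for fixed $\eps$ does not save the comparison, and the sign of the quadratic term defeats the maximum principle at a positive maximum (the paper points this out explicitly). The paper's way around this is the change of unknown $r^\eps=-q^\eps/p^\eps$, whose equation does enjoy maximum and minimum principles in zone $III$ and whose boundary values can be computed from the compatibility conditions and the boundary data; some substitute for this idea is needed, and your barrier $\pm C_0(\psi^0-\psi-\eps)$ applied directly to the $q^\eps$-equation does not provide one.
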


\begin{rmk}
In the above proposition, the functions $C^\pm(y)$ can be chosen as close to $a(y)$ as desired. Choosing $C^\pm$ close to $a$ will simply reduce the size of the zone in which \eqref{est:rhb} is valid, $\delta$.
 In a similar way, the functions $E^\pm$ can be chosen as close to $2 a^2$ as desired.
\end{rmk}

In the case of the $L^\infty$ bounds, we will create functions $\Phi^-_\eps$ (resp. $\Phi^+_\eps$) showing the above behaviour, such that $\Phi^-_\eps\leq w^\eps$ (resp. $\Phi^+_\eps\geq w^\eps$) on the parabolic boundary of $D^\eps$,
 and $\Lc \Phi^-_\eps \leq \Lc w^\eps$ (resp. $\Lc \Phi^+_\eps \geq \Lc w^\eps$) in the interior of $D^\eps$. \lemref{comparl} (b) then gets us the lower (resp. upper) bounds on $w^\eps$.
 
\begin{nota}
From here, to condense the writing, we will use the symbols $\pm$, $\gtrless$ and $\lessgtr$. In equalities or estimates involving these combined symbols, we mean that the property with just the upper symbols is true,
 and that, respectively, the property with just the lower symbols is also true.

For example, the above discussion on $\Phi^-_\eps$ and $\Phi^+_\eps$ can be abridged as follows: we construct $\Phi^\pm_\eps$ such that $\Phi^\pm_\eps\gtrless w^\eps$ on the parabolic boundary of $D^\eps$,
 and $\Lc \Phi^\pm_\eps \gtrless \Lc w^\eps$ on $D^\eps$. \lemref{comparl} (b) implies that $\Phi^\pm_\eps \gtrless w^\eps$ on $D^\eps$.
\end{nota}
 
 \section{Derivation of a priori bounds on the approximate problem}
 
\subsection{$L^\infty$ bounds}

The derivation of $L^\infty$ bounds is in two steps: 
\begin{itemize}
\item First, we construct sub- and super-solutions which will not have the precise desired behaviour near the boundaries, but that are \textbf{global} on the interval $[0,Y]$.
 We will refer to these functions as ``blanket sub-/super-solutions''.

\item Then we construct refined sub- and super-solutions close to the right-hand boundary. This construction is \textbf{local} in $y$, but the blankets will cover the shortcomings:
 thanks to the \textit{a priori} global bounds derived in the first step, we are able to iterate our construction and to get a control of $w^\eps$ close to the right-hand boundary over the whole interval $[0,Y]$.
\end{itemize}

We start with a result which we will use throughout this paragraph, and which highlights the behaviour of $w^\eps$ close to the right-hand boundary.

\begin{propo}
Assume that $\inf_y \lambda_0 (\psi^0)'(y)\nu(y)^{-1/3}>\sqrt[3]{27/4}$. Consider $\Cc^1$ functions $C^\pm(y)$ such that for all $y\in [0,Y]$,
$$
0<C^-(y)<a(y)<C^+(y)< \frac{2 \lambda_0 (\psi^0)'(y)}{3\nu(y)},
$$
and define
$$
W^\pm_\eps:=C^\pm(y)^2(\psi^0(y)-\psi-\eps)^2 + (w_r^\eps(y)- C^\pm(y)^2 \eps^2).
$$

Then there exist $\gamma^\pm \gtrless 0$ and $\delta^\pm>0$ such that for all $\eps>0$ sufficiently small, on the domain
$$ \{(\psi, y) ~|~ 0\leq y \leq Y,~ \psi^0(y)-\delta^\pm \leq \psi \leq \psi^0(y)-2\eps\}, $$
there holds
$$
\Lc W^\pm_\eps \gtrless (2+\gamma^\pm) (\psi^0(y)-\psi-\eps).
$$

Furthermore, if $C^-(y)$ is chosen in a neighbourhood of zero, then $\gamma^-<-\gamma_0$ and $\delta>\delta_0$, for some universal positive constants $\gamma_0$, $\delta_0$.
\label{prop:rhb}
\end{propo}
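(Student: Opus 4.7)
The plan is to perform a direct computation of $\Lc W^\pm_\eps$ and then do a sign analysis of its leading coefficient. Writing $Z:=\psi^0(y)-\psi-\eps$ for brevity, differentiation gives
\be
\p_\psi W^\pm_\eps=-2(C^\pm)^2 Z,\qquad \p_\psi^2 W^\pm_\eps=2(C^\pm)^2,
\ee
\be
\p_y W^\pm_\eps=2(C^\pm)^2(\psi^0)'(y)\,Z + 2C^\pm (C^\pm)'(Z^2-\eps^2)+(w_r^\eps)'(y).
\ee
The compatibility condition \eqref{compvmr} yields $w_0(\psi^0(0)-\eps)=a(0)^2\eps^2+o(\eps^2)$, whence $w_r^\eps(y)-(C^\pm)^2\eps^2=(a(y)^2-(C^\pm)^2)\eps^2+o(\eps^2)$. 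Since $C^+>a(y)>C^-$, for $\eps$ small this quantity is negative for $W^+_\eps$ and positive for $W^-_\eps$, so on the domain $\{Z\geq\eps\}$ we have $W^+_\eps\leq (C^+)^2 Z^2$ and $W^-_\eps\geq (C^-)^2 Z^2$, hence $\sqrt{W^+_\eps}\leq C^+ Z$ and $\sqrt{W^-_\eps}\geq C^- Z$. Plugging these bounds into $\Lc W^\pm_\eps=\lambda_0\nu(y)\,\p_y W^\pm_\eps-\nu(y)^2\sqrt{W^\pm_\eps}\,\p_\psi^2 W^\pm_\eps$ produces
\be
\Lc W^\pm_\eps\gtrless g_y(C^\pm(y))\,Z+2\lambda_0\nu(y) C^\pm (C^\pm)'(Z^2-\eps^2)+\lambda_0\nu(y)(w_r^\eps)'(y),
\ee
where $g_y(C):=2\lambda_0\nu(y)(\psi^0)'(y)C^2-2\nu(y)^2 C^3$.

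The key observation will be that $a(y)$ being a root of $P_y$ forces $g_y(a(y))=2$, while $g_y'(C)=2C\bigl(2\lambda_0\nu(y)(\psi^0)'(y)-3\nu(y)^2 C\bigr)$ is strictly positive for $0<C<\frac{2\lambda_0(\psi^0)'(y)}{3\nu(y)}$. Since $P_y$ attains its unique positive local minimum at $\frac{2\lambda_0(\psi^0)'(y)}{3\nu(y)}$ and $a(y)$ is the smaller positive root, $a(y)$ lies to the left of this minimum, and so do all admissible $C^\pm(y)$ by hypothesis; thus $g_y$ is strictly increasing throughout the range in which the $C^\pm$ take their values. Writing $g_y(C^\pm(y))=2+2\eta^\pm(y)$, I then obtain $\eta^+>0$ and $\eta^-<0$ pointwise, and compactness of $[0,Y]$ yields uniform constants $\eta^+\geq\eta^+_0>0$ and $\eta^-\leq\eta^-_0<0$.

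It will remain to absorb the two remainder terms on the strip $\{\eps\leq Z\leq\delta\}$. The elementary bound $|Z^2-\eps^2|=(Z-\eps)(Z+\eps)\leq 2\delta Z$ gives $|2\lambda_0\nu C^\pm (C^\pm)'(Z^2-\eps^2)|\leq K_1\delta\,Z$ for some constant $K_1$, while $(w_r^\eps)'(y)=O(\eps^2)$ uniformly in $y$ (from the explicit form of $w_r^\eps$) gives $|\lambda_0\nu(w_r^\eps)'|\leq K_2\eps^2$. Fixing $\delta^\pm$ so that $K_1\delta^\pm\leq |\eta^\pm_0|$, and then using $Z\geq\eps$ to rewrite $K_2\eps^2\leq K_2\eps\,Z$, I conclude for $\eps$ small enough (i.e.\ $K_2\eps\leq |\eta^\pm_0|$) the desired inequalities $\Lc W^\pm_\eps\gtrless(2+\gamma^\pm)Z$ with $\gamma^\pm:=\eta^\pm_0$. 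The remark about $C^-$ near zero follows immediately, since $g_y(C^-)\to 0$ uniformly in $y$ as $C^-\to 0$, forcing $|\eta^-|$ close to $1$ and yielding a universal lower bound on both $|\gamma^-|$ and $\delta^-$. The main technical subtlety I expect is the $O(\eps^2)$ term from $(w_r^\eps)'$: because $Z$ degenerates to $\eps$ on the right edge of the approximate domain, this term would look dangerous at first glance, but the constraint $Z\geq \eps$ exactly converts it into an $\eps\,Z$ correction that can be absorbed into the spare part of $\eta^\pm_0$.
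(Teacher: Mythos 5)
Your proposal is correct and follows essentially the same route as the paper: compute $\Lc W^\pm_\eps$ explicitly, recognise the leading coefficient as $2-2P_y(C^\pm)$ and use the monotonicity of $P_y$ on $\bigl(0,\tfrac{2\lambda_0(\psi^0)'}{3\nu}\bigr)$ together with $P_y(a)=0$ to get $\gamma^\pm\gtrless 0$, then absorb the $(C^\pm)'$-term by shrinking $\delta^\pm$ and the $\Oc(\eps^2)$ term via $\psi^0(y)-\psi-\eps\geq\eps$. Your explicit one-sided bounds $\sqrt{W^\pm_\eps}\lessgtr C^\pm(\psi^0(y)-\psi-\eps)$, based on the sign of $w_r^\eps-(C^\pm)^2\eps^2$, are in fact a slightly more careful treatment of the diffusion term than the paper's formula \eqref{lphi3}; just note that for the final claim on $\gamma^-,\delta^-$ the ``neighbourhood of zero'' for $C^-$ should be taken in $\Cc^1$, so that the coefficient multiplying $\delta^-$ is also small.
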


\begin{proof}
We have
 \begin{equation} \begin{array}{rcl} \Lc W^\pm_\eps &=& 2\Big(\lambda_0 \nu(y) (\psi^0)'(y) (C^\pm)^2 - \nu(y)^2(C^\pm)^3\Big)(\psi^0(y)-\psi-\eps) \\
 & &+ 2\lambda_0\nu(C^\pm)'(y)C^\pm(y)(\psi^0(y)-\psi-\eps)^2+\lambda_0\nu[\p_y w_r^\eps(y)-2(C^\pm)'(y)C^\pm(y) \eps^2], \end{array} \label{lphi3}
  \end{equation}
 and the main property we want is, taking the top line of (\ref{lphi3}),
 $$ \lambda_0 \nu(y)(\psi^0)'(y) (C^\pm)^2 - \nu(y)^2(C^\pm)^3 - 1 = -P_y(C^\pm) \gtrless 0. $$
 As $\lambda_0 (\psi^0)'(y)/\nu^{-1/3}$ is assumed to be larger than $\sqrt[3]{27/4}$, $P_y$ has two positive roots, the first of which is $a(y)$, and $-P_y$ is increasing on $[0, 2 \lambda_0 (\psi^0)'(y)/(3\nu(y))]$.
 As a result, providing $C^+$ is taken strictly between the functions $a(y)$ and $2 \lambda_0 (\psi^0)'(y)/(3\nu(y))$, and that we choose $0<C^-(y)<a(y)$, we get, for some $\gamma^+>0$ and $\gamma^-<0$,
 $$
2\Big(\lambda_0 \nu(y) (\psi^0)'(y) (C^\pm)^2 - \nu(y)^2(C^\pm)^3\Big) \gtrless 2+2\gamma^\pm.
 $$
 Notice that the functions $C^\pm$ can be chosen to be as regular as desired, and that if $C^-$ is in a neighbourhood of zero, then $\gamma^-<-1/2$.
 
We now consider the lower order terms, i.e. the ones involving the derivatives of $C^\pm$ and of $w_r^\eps$.
 First, notice that $2\lambda_0 \nu(y)(C^\pm)'(y)C^\pm(y)(\psi^0(y)-\psi-\eps)^2$ vanishes at $\psi^0(y)-\eps$ at a higher order than the first line of (\ref{lphi3}).
 We therefore choose $\delta^\pm$ so that
$$
\delta^\pm\leq \inf_{y\in Y} \frac{|\gamma^\pm|}{4 \lambda_0 \nu(y) |(C^\pm)'(y)| C^\pm(y)}.
$$
Once again, if $C^-$ is in a sufficiently small neighbourhood of zero in $\mathcal C^1([0,Y])$, one can take $\delta^-=1$.

Then, we recall that $\p_y w_r^\eps(y)= 2 a(y) a'(y) w_0(\psi^0(0)-\eps)/a(0)^2=\Oc(\eps^2)$, so
$$
\left| \lambda_0\nu(y)\p_y w_r^\eps(y) - 2\lambda_0 \nu(y) (C^\pm)'(y)C^\pm(y) \eps^2\right|\leq C \eps^2\leq \frac{|\gamma^\pm|}{2}(\psi^0(y)-\psi - \eps)
$$
for all $\psi\in [\psi^0(y)-\delta^\pm, \psi^0(y)-2\eps]$, provided $\eps$ is sufficiently small.

In total, if $\psi \in [\psi^0(y) -\delta^\pm, \psi^0(y)-2 \eps]$,
$$\Lc W^\pm_\eps \gtrless (2+\gamma^\pm)(\psi^0(y)-\psi-\eps).$$
\end{proof}

\subsubsection{``Blanket'' sub- and super-solutions}

Let $\delta_0>0$ to be chosen later, and we divide $D^\eps$ into three zones:
 $$ \begin{array}{rcl} I & = & \{ 0< \psi < \delta_0 \} \\
II & = & \{ \delta_0 \leq \psi \leq \psi^0(y)-\delta_0 \} \\
III & = & \{ \psi^0(y)-\delta_0 < \psi < \psi^0(y)-2\eps \}, \end{array} $$
and $(h_I, h_{II}, h_{III})$ a partition of unity such that $h_I$ is nonincreasing, $h_{III}$ is nondecreasing, $h_I\equiv 1$ in zone $I$, $h_{III} \equiv 1$ in zone $III$,
 and $h_{II}\equiv 1$ in the set
 $$ \{(\psi,y)\in II ~|~ 2\delta_0 \leq \psi \leq \psi^0(y)-2\delta_0 \} . $$
 We shall see that we will be able to use Ole\u{i}nik and Samokhin's constructs on the first two zones, while in the third zone, the blanket sub-solution will be given by $W^-_\eps$ with $C^-$ small.
 This smallness is required to control the transition between zones $II$ and $III$.

\begin{propo}\label{prop:blankets}
Assume that a solution to problem (\ref{vma}) exists. 

\begin{itemize}
\item There exists a constant $\overline{M}$, depending only on $Y$, $\|w_0\|_\infty$, $\psi^0$ and $\nu$, such that
$$
w^\eps(\psi, y)\leq \overline{M},\quad \forall (\psi,y)\in D^\eps.
$$

\item Define a function $\wme$ by
\begin{equation} \begin{array}{rcl} \wme (\psi,y) & = & h_I(\psi)\left[ w^\eps_l(y)+\left(\underline{A} \psi^{4/3} + \underline{B} \psi\right)e^{-\alpha_0 y} \right] \\
& & + h_{II}(\psi,y) \underline{M} e^{-\alpha_0 y} \\
& & + h_{III}(\psi,y) W^-_\eps, \end{array} \label{asolw} \end{equation}
with the positive parameters $\underline{A}$, $\underline{B}$, $\underline{M}$ and $\alpha_0$. 

Then one can choose the numbers $\underline{A}$, $\underline{B}$, $\underline{M}$, $\delta_0$ and $C^-$ (entering the definition of $W^-_\eps$; this can actually be chosen constant in $y$)
 small enough, and $\alpha_0$ large enough, so that
$$
\Lc \wme \leq 2 (\psi^0-\psi-\eps)~\text{in}~D^\eps ~~\text{and}~~\wme \leq w^\eps ~\text{on}~ \Gamma^\eps.
$$
As a consequence, with this choice of parameters, we have $\wme\leq w^\eps$ on $D^\eps$.
\end{itemize}
\end{propo}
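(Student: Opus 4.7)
The plan is to apply Lemma \ref{comparl} twice, after rewriting \eqref{vma} in the form $\Ac(w):=\p_y w-(\nu/\lambda_0)\sqrt{w}\,\p_\psi^2 w = 2(\psi^0-\psi-\eps)/(\lambda_0\nu)$ required by the lemma. For the upper bound I take a global supersolution affine in $y$: $\overline{M}(y)=M_0+C_1 y$. Since $\p_\psi^2\overline{M}=0$, the inequality $\Ac(\overline{M})\geq \Ac(w^\eps)$ reduces to $\lambda_0\nu C_1\geq 2(\psi^0-\psi-\eps)$, which holds globally once $C_1\geq 2\sup_{[0,Y]}\psi^0/(\lambda_0\inf\nu)$. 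I then pick $M_0$ larger than $\|w_0\|_\infty$ and than $\sup_\eps\sup_{[0,Y]}(w_l^\eps+w_r^\eps)$. The boundary data are uniformly bounded in $\eps$: the compatibility condition \eqref{compvml} together with $w_0(\psi)\sim 2 v_0'(0)\psi$ near $0$ forces $\mu(\eps)/w_0(\eps)=\Oc(1)$, while $w_0(\eps)\to 0$, so the exponential defining $w_l^\eps$ stays controlled; for $w_r^\eps$, boundedness is immediate. Lemma \ref{comparl} then yields $w^\eps\leq\overline M(Y)$.

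For the lower bound I verify separately the parabolic boundary inequality $\wme\leq w^\eps$ on $\Gamma^\eps$ and the operator inequality $\Lc\wme\leq 2(\psi^0-\psi-\eps)$ in $D^\eps$. On the lateral sides of $\Gamma^\eps$ the cutoff structure is designed so that exactly one $h_X$ equals $1$: at $\psi=0$, $\wme(0,y)=w_l^\eps(y)$; at $\psi=\psi^0(y)-2\eps$, $W^-_\eps$ evaluates exactly to $w_r^\eps(y)$ because $\psi^0-\psi-\eps=\eps$ there. At $y=0$, in zone I one has $\wme(\psi,0)=w_0(\eps)+\underline A\psi^{4/3}+\underline B\psi$, to be dominated by $w_0(\eps+\psi)\geq w_0(\eps)+2v_0'(0)\psi+\Oc(\psi^2)$ (which holds since $w_0'(0^+)=2v_0'(0)>0$) provided $\underline A$ and $\underline B$ are small; in zones II and III, $w_0(\psi+\eps)$ is bounded below by a fixed positive constant on any compact subinterval of $(0,\psi^0(0))$ and satisfies $w_0(\psi)\sim a(0)^2(\psi^0(0)-\psi)^2$ near the right endpoint, so choosing $\underline M$ and $C^-$ small enough produces the required domination.

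The differential inequality is then verified zone by zone in the interior of $D^\eps$. In zone I ($h_I\equiv 1$), direct computation gives
\[
\Lc\wme = \lambda_0\nu\,\p_y w_l^\eps - \lambda_0\nu\alpha_0(\underline A\psi^{4/3}+\underline B\psi)e^{-\alpha_0 y} - \tfrac{4}{9}\nu^2\sqrt{\wme}\,\underline A\,\psi^{-2/3}e^{-\alpha_0 y}.
\]
Here $\p_y w_l^\eps=\mu(\eps)e^{\mu(\eps)y/w_0(\eps)}=\Oc(\eps)$ by \eqref{compvml}, while the last two terms are non-positive; since $\psi<\delta_0<\psi^0(0)$ the right-hand side $2(\psi^0-\psi-\eps)$ is bounded below by a positive constant, so the inequality holds for $\eps$ sufficiently small. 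In zone II ($h_{II}\equiv 1$), $\wme=\underline M e^{-\alpha_0 y}$ yields $\Lc\wme=-\lambda_0\nu\alpha_0\underline M e^{-\alpha_0 y}\leq 0\leq 2(\psi^0-\psi-\eps)$. In zone III ($h_{III}\equiv 1$), Proposition \ref{prop:rhb} applied with $C^-$ taken in a neighbourhood of zero gives $\Lc W^-_\eps\leq(2+\gamma^-)(\psi^0-\psi-\eps)\leq 2(\psi^0-\psi-\eps)$ since $\gamma^-<0$.

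The main obstacle is the two transition strips, where two cutoffs are simultaneously nontrivial and the nonlinearity of $\Lc$ (via the factor $\sqrt{\wme}$ in front of $\p_\psi^2\wme$) forbids a naive linear splitting; expanding $\p_y$ and $\p_\psi^2$ on a product $h_X f_X+h_Y f_Y$ produces cross-terms containing $\p_\psi h_X$, $\p_\psi^2 h_X$ and $\p_y h_X$ that have to be absorbed. My strategy will be to first fix $\delta_0$, then take $\alpha_0$ large so that the negative contribution produced by $\p_y(e^{-\alpha_0 y}\cdot\cdots)$ dominates, and finally to choose $\underline M$, $\underline A$, $\underline B$ and $C^-$ small enough to ensure both continuity across the transitions (in the II-III transition this amounts to $\underline M e^{-\alpha_0 y}\leq W^-_\eps$ at the edge $\psi=\psi^0(y)-2\delta_0$, an ordering guaranteed by smallness of $\underline M$ and of $C^-$ relative to $w_r^\eps$) and sign control of the cross-terms, exploiting uniform bounds on $\wme$ and $\p_\psi^2\wme$ away from $\psi=0$. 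Once both the boundary and the interior inequalities are in hand, Lemma \ref{comparl} applied to $\Ac$ (whose diffusion $d=\nu\sqrt{w}/\lambda_0$ is strictly positive since $\wme$ and $w^\eps$ remain bounded below on $\overline{D^\eps}$ by their positive boundary data) concludes $\wme\leq w^\eps$ on $\overline{D^\eps}$.
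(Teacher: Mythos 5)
Your upper bound, your boundary verification, and the zone-interior computations coincide with the paper's proof (affine-in-$y$ supersolution; $\underline{B}<w_0'(0)$ near $\psi=0$; $\underline{M}\leq \min w_0$ in the middle; Proposition \ref{prop:rhb} with $C^-$ near zero in zone III). The genuine gap is in the transition strips, which is precisely where the paper does the real work. Your plan --- fix $\delta_0$, then take $\alpha_0$ large so that the term coming from $\p_y(e^{-\alpha_0 y}\cdots)$ dominates the cutoff cross terms --- fails near the inner edges of the strips: for $\psi$ close to $\delta_0$ (resp. to $\psi^0(y)-\delta_0$) the factor $1-h_I$ (resp. $1-h_{III}$) multiplying $-\lambda_0\nu\alpha_0\underline{M}e^{-\alpha_0 y}$ is arbitrarily small, so no choice of $\alpha_0$ absorbs the terms in $\p_\psi h$, $\p^2_\psi h$, $\p_y h$ there. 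The paper splits each strip in two: in a thin sub-strip where the cutoff derivatives are small by continuity (they vanish at $\psi=\delta_0$, resp. $\psi=\psi^0(y)-\delta_0$), the cross term is absorbed by the margin native to the adjacent zone --- the singular term of order $-\underline{A}\sqrt{\tilde{B}}\,\psi^{-1/6}$ inherited from zone I, and the strict gain $-\gamma h_{III}(\psi^0-\psi-\eps)$ with $\gamma\geq 1/2$ furnished by Proposition \ref{prop:rhb} (you use $\gamma^-<0$ only to conclude $\Lc W^-_\eps\leq 2(\psi^0-\psi-\eps)$ inside zone III, whereas this margin is exactly what is needed in $\omega_{III}$); only on the remaining part, where $1-h\geq k>0$, is $\alpha_0$ taken large.

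Second, your prescription for the II--III transition is backwards. You require $\underline{M}e^{-\alpha_0 y}\leq W^-_\eps$ at $\psi=\psi^0(y)-2\delta_0$, ``guaranteed by smallness of $\underline{M}$ and of $C^-$ relative to $w_r^\eps$'': this is unachievable ($w_r^\eps=\Oc(\eps^2)$, and shrinking $C^-$ makes $W^-_\eps$ smaller, not larger), and for a partition-of-unity sum no such ordering is needed. What is needed is the reverse bound $W^-_\eps,\ |\p_\psi W^-_\eps|\leq \underline{M}e^{-\alpha_0 y}$ throughout $\omega_{III}$, obtained by choosing $C^-$ small depending on $\underline{M}$, $\alpha_0$ and $Y$ \emph{after} $\alpha_0$ --- an order of choices that is legitimate only because Proposition \ref{prop:rhb} guarantees that shrinking $C^-$ does not degrade $\gamma$ or $\delta_0$ (your order, with $\underline{M}$, $\underline{A}$, $\underline{B}$ also adjusted after $\alpha_0$, is not safe, since the constants bounding $T$ depend on them). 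This is what makes the cross term scale like $e^{-\alpha_0 y}$, so that $-k\lambda_0\nu\alpha_0\underline{M}e^{-\alpha_0 y}$ beats it uniformly on $[0,Y]$; with cross terms of fixed size, the exponentially small $\alpha_0$-term near $y=Y$ can never dominate, however large $\alpha_0$ is --- this is exactly the obstruction that forces the refined construction of Proposition \ref{altersols}, where $C^-$ must stay close to $a$, to be only local in $y$. A minor further point: at $y=0$, $\wme$ and $w^\eps$ coincide at the corner $\psi=\psi^0(0)-2\eps$, so ``choose $C^-$ small'' must be backed by the first-order Taylor comparison using $w_0''(\psi^0(0))=2a(0)^2$, i.e. $C^-(0)<a(0)$, as in the paper.
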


\begin{proof}
\textbf{Uniform super-solution.} Let us start with the upper bound $\overline{M}$. Consider the function
$$
\varphi (\psi, y):= 1+\|w_0\|_\infty + \zeta_0 y,
$$
for some $\zeta_0>0$. Then $\varphi\geq w^\eps$ on $\Gamma^\eps$ provided $\eps$ is small enough, and 
$$
\Lc \varphi= \lambda_0 \nu \zeta_0.
$$
Hence $\Lc \varphi\geq 2(\psi^0-\psi-\eps)$ as soon as $\zeta_0\geq \sup_{y\in (0,Y)} {2 \psi^0}/({\lambda_0 \nu})$. Now, setting 
\be\label{def:barM}
\overline{M}:= 1+\|w_0\|_\infty +  Y\sup_{y\in (0,Y)} \frac{2 \psi^0}{\lambda_0 \nu},
\ee
we infer that $w^\eps\leq \overline{M}$ in $D^\eps$.
\gap

We now address the computations on the sub-solution $\wme$.

\textbf{Boundary constraints.} 
By construction, $\wme = w^\eps$ on the left- and right-hand boundaries, $\{\psi=0\}$ and $\{\psi = \psi^0(y)\}$. On $\{y=0\}$, close to $\psi=0$, we have
$$ w^\eps(\psi,0)=w_0(\eps+\psi)\simeq w_0(\eps) + w_0'(\eps) \psi + \Oc(\psi^2)= w_l^\eps(0) + w_0'(0)\psi + \Oc(\eps \psi+ \psi^2), $$
and therefore, for any choice of $\underline{A} > 0$, $0<\underline{B}<w_0'(0)$, there exists $\delta_0>0$ depending on $\underline{A},~ \underline{B}$ such that
 $\underline{A}\psi^{4/3}+\underline{B}\psi \leq w^\eps$ in $\{y=0~,~ 0<\psi<2\delta_0\}$
 for $\eps$ sufficiently small (we take the interval $[0,2\delta_0]$ to ensure a good overlap when transitioning into zone $II$).
 We now consider the boundary condition for $y=0$ and $\psi$ close to $\psi^0(0)$. There, we have
 $$ \wme (\psi,0)= C^-(0)^2(\psi^0(0)-\psi-\eps)^2 + (w_0(\psi^0-\eps)- C^-(0)^2 \eps^2), $$
 and there exists $\bar \psi \in [\psi+\eps, \psi^0(0)-\eps]\subset [\psi^0(0)-2\delta_0, \psi^0(0)]$ such that
\begin{eqnarray*}
\wme(\psi,0) - w^\eps(\psi,0)&=& C^-(0)^2 \left( \psi^0(0) - \psi -\eps\right)^2- C^-(0)^2\eps^2 \\&&+ w_0(\psi^0(0)-\eps) - w_0(\eps+\psi)\\
&=&\left(C^-(0)^2 -\frac{1}{2} w_0''(\bar \psi)\right) \left( \psi^0(0) - \psi -2\eps\right)^2 \\&&+ \left(2 \eps C^-(0)^2+ w_0'(\psi^0(0)-\eps)\right)  \left( \psi^0(0) - \psi -2\eps\right).
\end{eqnarray*}
The compatibility condition (\ref{compvmr}) entails that $w_0''(\psi^0(0))= 2 a(0)^2$, and therefore $w_0'(\psi)\sim -2a(0)^2 (\psi^0(0)-\psi)$ for $|\psi-\psi^0(0)|\ll1$.
 As a consequence, for $\eps$ and $\delta_0$ small enough, we have, if $C^-(0)<a(0)$ and $\psi>\psi^0(y)-2\delta_0$,
$$
C^-(0)^2 -\frac{1}{2} w_0''(\bar \psi)< 0,\quad 2 \eps C^-(0)^2+ w_0'(\psi^0(0)-\eps)< 0.
$$
Thus, $\wme(\psi,0) \leq  w^\eps(\psi,0)$ on $\{y=0, \psi^0(0)-\delta_0<\psi<\psi^0(0)-2\eps\}$.
 In the middle zone $\{y=0, \delta_0<\psi<\psi^0-\delta_0\}$, we have $\wme \leq w_\eps$ if we have, for example,
 $$ \underline{M} \leq \min_{\psi \in \left[\frac{\delta_0}{2},\psi^0(0)-\frac{\delta_0}{2}\right]} w_0(\psi), $$
 and $\eps$ small enough. This ensures that the total construction $\wme$ is smaller than $w^\eps_0$ everywhere on $\Gamma^\eps$.
\gap
 
In what follows, we choose $C^-$ small enough so that $\delta_0$ does not need to be changed, as per Proposition \ref{prop:rhb}. However, we are yet to set $\alpha_0$.

\textbf{Interior, zones $I$ and $II$.} We now need to choose the parameters so that
\begin{equation} \Lc\wme \leq  \Lc w^\eps = 2(\psi^0(y)-\psi-\eps). \label{goal} \end{equation}
in $D^\eps$. We have
\begin{equation} \begin{array}{rcl} \Lc\wme(\psi,y) & = &  h_I\left[\lambda_0\nu\derp{y}w_l^\eps(y)-\lambda_0\nu \alpha_0 (\underline{A}\psi^{4/3}+\underline{B}\psi)e^{-\alpha_0 y} -\frac{4}{9}\underline{A} \nu^2\sqrt{\wme}\psi^{-2/3} e^{-\alpha_0 y}\right] \\
& &-  h_{II} \lambda_0 \nu \alpha \underline{M} e^{-\alpha_0 y} \\
& & + h_{III} \Lc W^-_\eps + T \end{array} \label{lwme} \end{equation}
in which $T$, the ``transition term'', contains derivatives of the partition of unity, which are nonzero only in $\{\delta_0<\psi<2\delta_0\}$ and $\{\psi^0(y)-2\delta_0<\psi<\psi^0(y)-\delta_0\}$.

In zone $I$, we just have the first line of (\ref{lwme}), and the key term is the last one. 
More precisely, using the fact that
\begin{equation} \sqrt{\wme}\geq \sqrt{\frac{\tilde{B}}{2}}\psi^{1/2}e^{-\alpha_0 y/2} , \label{btilde} \end{equation}
for some $\tilde{B}\geq \underline{B}$,
 we get that for $0<\psi <2\delta_0$,
 \begin{multline}\label{est-zone1-1}
\lambda_0\nu(y)\derp{y}w_l^\eps(y)-\lambda_0 \nu(y)\alpha_0(\underline{A}\psi^{4/3}+\underline{B}\psi)e^{-\alpha_0 y} -\frac{4}{9}\underline{A} \nu(y)^2\sqrt{\wme}\psi^{-2/3} e^{-\alpha_0 y}\\
<- \nu(y)^2\frac{2\sqrt{2}}{9}\underline{A}\sqrt{\tilde{B}} e^{-3\alpha_0 y/2} \psi^{-1/6} -\frac{1}{2}\alpha_0 \nu(y)\lambda_0 \underline{B} \psi e^{- \alpha_0 y} + \Oc(\eps). 
 \end{multline}
The two first terms are negative, and therefore, for $\eps$ sufficiently small, the right-hand side of the inequality is negative on $[0, 2\delta_0]$.
\gap

Zone $II$ can be broken down into three parts: the first transition zone
$$ \omega_I=\{\delta_0<\psi<2\delta_0\} , $$
the main middle part where $h_{II}=1$, and the second transition zone,
$$ \omega_{III}=\{\psi^0(y)-2\delta_0<\psi<\psi^0(y)-\delta_0\} .$$

In the middle part, we just have $\Lc\wme =  -\lambda_0 \alpha_0 \nu(y) \underline{M} e^{-\alpha_0 y}\leq 0$, hence we can choose any $\alpha_0\geq 0$ for the moment,
 but this parameter will play an important role in the transition mechanism.
 \gap

\textbf{Transition area $\omega_I$.} In the area $\omega_I$, we have $h_I \neq 0$, so we should consider the first two lines of (\ref{lwme}) and the transition term $T$.
 Since the partition only depends on $\psi$ in $\omega_I$, we have that 
\begin{eqnarray*}
 T(\psi, y)&=&-\nu(y)^2 \sqrt{\wme} \p_\psi^2 h_I \left(w_l^\eps +\left(\underline{A} \psi^{4/3} + \underline{B} \psi - \underline{M} \right) e^{-\alpha_0 y}  \right)\\
 && \hspace{20pt} -\nu(y)^2 \sqrt{\wme} \left( 2 \p_\psi h_I \left(\frac{4}{3} \underline{A} \psi^{1/3} + \underline{B}\right)\right) e^{-\alpha_0 y}.
\end{eqnarray*}
As a consequence, we infer that there exists a constant $K_1= K_1(\underline{M}, \underline{A}, \underline{B})$ such that for $\eps$ sufficiently small,
$$
T(\psi, y)\leq K_1 e^{-3\alpha_0 y/2} \left(|\p_\psi h_I| + |\p_\psi^2 h_I|\right).
$$
Now, since $\p_\psi h_I$ and $\p_\psi^2 h_I$ vanish for $\psi=\delta_0$, by continuity and using (\ref{btilde}), there exist $\delta'>0$ such that for $\psi\in [\delta_0, \delta']$
 $$
 K_1\left(|\p_\psi h_I| + |\p_\psi^2 h_I|\right)\leq \left(\inf_{y\in Y}\nu(y)^2\right)\frac{1}{9\sqrt{2}}\underline{A}\sqrt{\tilde{B}} \psi^{-1/6}~~ \text{and}~~ h_I\geq 1/2.
 $$
 As a consequence, for $\psi\in [\delta_0, \delta_0 + \delta']$, we have,
$$
\Lc\wme<-\frac{\sqrt{2}}{9} \nu(y)^2 \underline{A}\sqrt{\tilde{B}} e^{ 3\alpha_0 y/2} \psi^{-1/6} - \frac{1}{2}\nu(y)\lambda_0 \alpha_0 \underline{B} \psi e^{ \alpha_0 y} + \Oc(\eps) <0.
 $$
 We now consider $\psi\in [\delta_0 + \delta', 2 \delta_0]$. On this set, there exists  positive constants $k_{\delta_0, \delta'},~K_{\delta_0, \delta'}$ such that $1-h_I\geq k_{\delta_0, \delta'}$, and 
 $$
 |\p_\psi h_I| + |\p_\psi^2 h_I|\leq K_{\delta_0, \delta'}.
 $$
 Hence, on this set, we have
 $$
 \Lc\wme<- k_{\delta_0, \delta'}  \lambda_0 \alpha_0 \nu(y) \underline{M} e^{-\alpha_0 y} + K_1 K_{\delta_0, \delta'}  e^{-3\alpha_0 y/2} .
 $$
 Thus we can choose $\alpha_0$ large (depending on all the other parameters), such that the right-hand side of the above inequality is negative for all $y\in [0,Y]$. 

So far, this is essentially the same construction and method as in \cite{OSbook}. It remains to look at zone $III$ and its transition zone $\omega_{III}$.
\gap

\textbf{Interior, zone $III$.} In zone $III$, choosing $C^-(y)$ small enough, we have, according to Proposition \ref{prop:rhb},
\be
\Lc \wme= \Lc W^-_\eps\leq  (2-\gamma) (\psi^0-\psi-\eps)
 \label{rightzone} \end{equation}
for some $\gamma\geq 1/2$.
\gap

\textbf{Transition area $\omega_{III}$.} Here, we observe a transition in the mechanism of being a sub-solution, going from one where the behaviour of $\Lc\wme$ is driven by
 the derivative of $e^{-\alpha_0 y}$, to one which is governed algebraically by the polynomial $P_y$.

Using the previous estimates, we have, since $h_{II}+ h_{III}=1$ in this area,
\begin{equation}
\begin{array}{rcl} \Lc\wme - 2 (\psi^0(y)-\psi - \eps) &\leq  & (1-h_{III}) \left( - \lambda_0 \nu(y)\alpha_0 \underline{M} e^{ -\alpha_0  y} - 2 \left(\psi^0-\psi-\eps\right)\right) \\
 & &-\gamma h_{III} (\psi^0 - \psi-\eps) + T, \end{array} \label{transition2-1}
\end{equation}
and the transition term $T$ is given in $\omega_{III}$ by
\begin{equation}
\begin{array}{rcl} T&=&\left( \nu(y) \lambda_0 \p_y h_{III} - \nu(y)^2 \sqrt{\wme} \p_\psi^2 h_{III} \right) ( W_\eps^- - \underline{M} e^{-\alpha_0 y})\\
&&- 2\nu(y)^2\sqrt{\wme} \p_\psi h_{III} \p_\psi W_\eps^-. \end{array} \label{transition}
 \end{equation}
Notice that in $\omega_{III}$,
$$
0\leq W_\eps^-\leq 4C^-(y)^2\delta_0^2 + \Oc(\eps^2),\quad |\p_\psi W_\eps^-|\leq 4C^-(y)^2 \delta_0,
$$
and therefore, choosing $C^-$ sufficiently small (depending on $\underline{M}$, $\alpha_0$ and $Y$), we can always require that
$$
W_\eps^-,~|\p_\psi W_\eps^-|\leq \underline{M} e^{-\alpha_0 y} ~~ \text{in}~~ \omega_{III}.
$$
Once again, we recall that choosing such a function $C^-$ has no impact on $\gamma$ or $\delta_0$. Thus there exists a constant $K_2$, depending only on $\lambda_0$, $\nu$ and $\underline{M}$, such that
\begin{equation}
|T(\psi, y)| \leq K_2  \left(| \p_y h_{III}| + | \p_\psi h_{III}| + | \p_\psi^2 h_{III}|\right) e^{-\alpha_0 y}.
\label{transition3} \end{equation}
 By continuity of $\p_y h_{III},~\p_\psi h_{III}$ and $\p_{\psi}^2 h_{III}$, we can pick $\delta''>0$ independently of $y$ so that, on the set
$$ \omega'' = \{ \psi^0(y)-\delta_0-\delta'' \leq \psi \leq \psi^0(y)-\delta_0 \} , $$
we have $h_{III}\geq 1/2$ and 
\begin{equation}
|T| \leq \frac{\gamma}{2} h_{III} (\psi^0 - \psi-\eps).
\label{tgamma} \end{equation}
Therefore, on the set $\omega''$, we have
 \begin{equation} \Lc\wme - 2(\psi^0(y)-\psi-\eps) \leq  0 . \label{32} \end{equation}
On the other hand, there exist $k_{\delta_0,\delta''},~K_{\delta_0, \delta''}>0$ such that for all $(\psi,y)\in \omega_{III}\backslash \omega''$, we have $1-h_{III}(\psi,y)\geq k_{\delta_0,\delta''}$ and
$$
|T| \leq K_{\delta_0, \delta''} e^{-\alpha_0 y}.
$$
So we have
$$
\Lc\wme - 2(\psi^0(y)-\psi-\eps) \leq \left[- \lambda_0 \nu \alpha_0 k_{\delta_0, \delta''} + K_{\delta_0, \delta''}\right] e^{-\alpha_0 y},
$$
and, choosing $\alpha_0>0$ large enough, depending on $\delta_0$ and $\delta''$, the right-hand side becomes negative. Estimate (\ref{32}) extends to all of $\omega_{III}$, thus it is proved on $D^\eps$.

%
%
%
%
%
\gap

\textbf{Conclusion.} The function $\wme$ satisfies the hypotheses of \lemref{comparl} (b) relative to $w^\eps$. As a result, the proposition is proved. 
\end{proof}

\subsubsection{Precise behaviour}

In this part, we aim to prove that $w^\eps(\psi,y)\sim a(y)^2(\psi^0(y)-\psi-\eps)^2$ near the right-hand boundary.
 To do so, we will take $\delta \leq \delta_0$, so that Proposition \ref{prop:blankets} holds.
Thanks to this property, we know that there exist positive parameters $M^-$, $M^+$, $A^-$ and $B^-$ (depending on $\delta$) such that
 $$ M^- \leq \wme(\psi,y) \leq w^\eps(\psi,y) \leq M^+ $$
 in zone $II$, and
 $$ w_l^\eps(y) + A^-\psi^{4/3} + B^-\psi \leq \wme(\psi,y) \leq w^\eps(\psi,y) \leq M^+ $$
 in zone $I$. These parameters depend on those in Proposition \ref{prop:blankets}, for instance
 $$ M^-  \leq \underline{M}e^{-\alpha_0 Y}. $$
 Some, such as $B^-\leq \underline{B}e^{-\alpha_0 Y}$ and $M^+=\overline{M}$, do not degrade with $\delta$. The goal now is to prove the precise behaviour in zone $III$.

\begin{propo} For any $\gamma>0$, there exist
\begin{itemize}
 \item functions $C^\pm(y) \in \Cc^1([0,Y])$ as in Proposition \ref{prop:blankets} with $\gamma\pm=\pm 2 \gamma$, and $\|a-C^\pm\|_{W^{1,\infty}} \leq K \gamma$ for some universal constant $K$;
 \item a positive number $\delta$ satisfying $\delta\leq \min(\delta_0,\delta^-,\delta^+)$, where $\delta_0$ is from Proposition \ref{prop:blankets} and $\delta^\pm$ are from Proposition \ref{prop:rhb},
 \item a positive parameter $\alpha$,
 \item and a positive number $y_1$ which depends on all of the above but not on $\eps$,
\end{itemize}
such that the functions $\Phi^\pm_\eps$ defined hereafter satisfy, for any $y_0\in [0, Y]$,
\begin{equation} \Lc \Phi^\pm_\eps \gtrless (2\pm\gamma)(\psi^0(y)-\psi-\eps) \label{goal2} \end{equation}
on the set
$$ D^\eps_{y_0,y_1} = \{(\psi,y)~|~y_0\leq y \leq \min(Y,y_0+y_1)~,~ 0 \leq \psi \leq \psi^0(y)-2\eps\} . $$

The functions are:
\begin{equation} \label{asdef}
\begin{array}{rcl} \Phi^+_\eps(\psi,y) & = & (h_I+h_{II})(\psi,y)M^+ e^{\alpha (y-y_0)}+ h_{III}(\psi,y) W^+_\eps \\
\Phi^-_\eps(\psi,y)  &= &h_I(\psi)\left[ w^\eps_l(y)+\left( A^-\psi^{4/3} + B^- \psi\right)e^{-\alpha (y-y_0)}\right]  \\
& & + h_{II}(\psi,y) M^- e^{-\alpha (y-y_0)}+ h_{III}(\psi,y) W^-_\eps , \end{array}\end{equation}
where the partition of unity $(h_I,h_{II},h_{III})$ is now adapted to the width $\delta$.

\label{altersols}
\end{propo}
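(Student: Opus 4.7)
\emph{Overview.} The proof follows the same five-zone decomposition as Proposition \ref{prop:blankets}, with two substantive modifications: the functions $C^\pm$ are chosen close to $a(y)$, so that the margin $\gamma^\pm$ from Proposition \ref{prop:rhb} is only $\pm 2\gamma$ instead of order one; and the reference time of the exponential factors is shifted from $0$ to $y_0$, which is the source of the restriction $y-y_0\leq y_1$. For the choice of $C^\pm$, note that $P_y'(a(y))=\nu(y)\,a(y)\bigl(3\nu(y)\,a(y)-2\lambda_0(\psi^0)'(y)\bigr)<0$ uniformly on $[0,Y]$, so the implicit function theorem applied to $P_y(t)=\mp 2\gamma$ produces $\mathcal{C}^1$ perturbations $C^\pm(y)=a(y)\pm 2\gamma/|P_y'(a(y))|+O(\gamma^2)$, which satisfy the assumptions of Proposition \ref{prop:rhb} with $\gamma^\pm=\pm 2\gamma$ and $\|a-C^\pm\|_{W^{1,\infty}}\leq K\gamma$ for $\gamma$ small. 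I then set $\delta:=\min(\delta_0,\delta^+,\delta^-)$ and adapt the partition $(h_I,h_{II},h_{III})$ to this width.

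\emph{Pure zones.} On zone I (where $h_I=1$), $\Phi^+_\eps=M^+e^{\alpha(y-y_0)}$ is constant in $\psi$, so $\Lc\Phi^+_\eps=\lambda_0\nu\alpha M^+e^{\alpha(y-y_0)}$, which dominates $(2+\gamma)\|\psi^0\|_\infty$ once $\alpha$ is large; for $\Phi^-_\eps$, estimate \eqref{est-zone1-1} applies verbatim (with $e^{-\alpha(y-y_0)}$ in place of $e^{-\alpha_0 y}$) and yields $\Lc\Phi^-_\eps<0\leq (2-\gamma)(\psi^0-\psi-\eps)$ for $\eps$ small. On zone II ($h_{II}=1$), both $\Phi^\pm_\eps$ are constant in $\psi$ and $\Lc\Phi^\pm_\eps=\pm\lambda_0\nu\alpha M^\pm e^{\pm\alpha(y-y_0)}$ readily yields \eqref{goal2} for $\alpha$ large. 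On zone III ($h_{III}=1$), $\Phi^\pm_\eps=W^\pm_\eps$ and Proposition \ref{prop:rhb} gives $\Lc W^\pm_\eps\gtrless(2\pm 2\gamma)(\psi^0-\psi-\eps)$, leaving a margin of $\gamma(\psi^0-\psi-\eps)$ in each direction.

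\emph{Transitions.} The transition $\omega_I$ for $\Phi^-_\eps$ is handled exactly as in the proof of Proposition \ref{prop:blankets}, splitting into a sub-strip adjacent to zone I (controlled by the $\psi^{-1/6}$ term) and its complement (controlled by $\alpha$ large); $\Phi^+_\eps$ is constant on $\omega_I$ so there is nothing to check. The new work lies in $\omega_{III}$. With $f=M^+e^{\alpha(y-y_0)}$ and $g=W^+_\eps$, the decomposition reads
\begin{equation}
\Lc\Phi^+_\eps = h_{II}\,\Lc f + h_{III}\,\Lc g + T^+_{III},
\end{equation}
where $T^+_{III}$ collects the partition-derivative terms together with the $\sqrt{\Phi^+_\eps}-\sqrt{g}$ discrepancy, and satisfies $|T^+_{III}|\leq K_2(|\p_y h_{III}|+|\p_\psi h_{III}|+|\p_\psi^2 h_{III}|)(f+g)$ for a constant $K_2$ depending only on $\lambda_0$ and $\nu$. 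Splitting $\omega_{III}$ into a strip $\omega''$ of width $\delta''$ adjacent to zone III, on which $h_{III}\geq 1/2$ and the partition derivatives are small by continuity, and its complement, on which $1-h_{III}\geq k_{\delta,\delta''}>0$: on $\omega''$, the margin $\gamma h_{III}(\psi^0-\psi-\eps)$ from $\Lc g$ absorbs $T^+_{III}$ provided $\delta''$ is small; on the complement, $h_{II}\Lc f=h_{II}\lambda_0\nu\alpha f$ absorbs it provided $\alpha$ is large. A mirror argument, using the damping $-\lambda_0\nu\alpha M^- e^{-\alpha(y-y_0)}$ in zone II and the margin from $\Lc W^-_\eps$ in zone III, handles $\Phi^-_\eps$ in $\omega_{III}$.

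\emph{Main obstacle.} The only delicate step is the absorption of $T^+_{III}$ on $\omega''$: this term scales like $f=M^+e^{\alpha(y-y_0)}$, whereas the absorbing margin $\gamma h_{III}(\psi^0-\psi-\eps)$ is of order $\gamma\delta$ and does not grow with $y-y_0$. The two balance uniformly only if $e^{\alpha(y-y_0)}$ stays bounded, which forces $y_1=y_1(\alpha,M^+,\delta'')$ to be small. The parameter selection therefore proceeds in the order $\gamma\to C^\pm\to\delta\to\alpha\to\delta''\to y_1$, each choice depending on the previous ones but all independent of $\eps$ and $y_0$. This uniformity is crucial, since $\Phi^\pm_\eps$ must subsequently be iterated over consecutive intervals of length $y_1$ to cover the full range $[0,Y]$.
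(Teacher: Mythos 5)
Your proposal is correct and follows essentially the same route as the paper's proof: pick $C^\pm$ close to $a$ using $P_y'(a)\neq 0$ so that Proposition \ref{prop:rhb} delivers the margin $\pm 2\gamma$, reuse the zone $I$/$II$ and $\omega_I$ computations of Proposition \ref{prop:blankets} with the shifted exponentials, and obtain the locality in $y$ (hence $y_1$, uniform in $y_0$ and $\eps$) by capping $e^{\alpha(y-y_0)}$, which is exactly the paper's mechanism. Two minor touch-ups: the transition bound for $\Phi^+_\eps$ actually carries a factor $e^{3\alpha(y-y_0)/2}$ coming from $\sqrt{\Phi^+_\eps}$ (harmless once the exponential is capped), and the parameters should be fixed in the order ``cap $e^{\alpha(y-y_0)}\leq 2$, then $\delta''$, then $\alpha$, then $y_1=\ln 2/\alpha$'' rather than $\alpha\to\delta''$, since the required size of $\alpha$ on $\omega_{III}\setminus\omega''$ depends on $k_{\delta,\delta''}$, and the same cap is also what saves the sub-solution in that region, where the absorbing term $-\lambda_0\nu\alpha M^- e^{-\alpha(y-y_0)}$ decays in $y$ while the transition term does not.
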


\begin{proof} Propositions \ref{prop:blankets} and \ref{prop:rhb} prove that $\Lc \Phi^\pm_\eps$ satisfy the desired inequalities on zones $I$,
 $III$ and $II\backslash(\omega_{III})$, and that the functions $C^\pm$ can be chosen closer to $a$ as $\gamma$ becomes small (continuity of the polynomial $P_y$, combined with the fact that $P_y'(a)\neq 0$),
 so we only need to concentrate on the transition zone $\omega_{III}$. 
 \gap

\textbf{Sub-solutions.} The difference with the previous proof is that we can no longer use the smallness of $C^-$ to control the transition term $T^-$, given by (\ref{transition}).
 Instead of (\ref{transition3}), we have
 $$ |T| \leq K_2 (|\derp{y}h_{III}|+|\derp{\psi}h_{III}|+|\derp{\psi}^2 h_{III}|) (1+ e^{-\alpha (y-y_0)}) $$
 for a constant $K_2$ which also depends on $\delta$. While we can continue to use the smallness of the derivatives of $h_{III}$ in a $\delta''$-sized neighbourhood of the set $\{\psi = \psi^0(y)-\delta\}$,
 we are unable to control the sign of the quantity
 $$ -\lambda_0 \nu(y) \alpha M^- e^{-\alpha (y-y_0)}k_{\delta,\delta''} + K_{\delta,\delta''}K_2(1+e^{-\alpha (y-y_0)}) $$
 on $\{\psi^0-2\delta\leq \psi \leq \psi^0-\delta-\delta''\}$ by simply taking $\alpha$ large. Here, we have re-used the notations $k_{\delta,\delta''}$ and $K_{\delta,\delta''}$ from the previous proof.
 We need to assume that $e^{-\alpha (y-y_0)}$ is bounded from below, say, by $1/2$. Then, we have
 $$ -\lambda_0 \nu \alpha M^- e^{-\alpha (y-y_0)} h_{II} + |T| \leq -\frac{1}{2} \lambda_0 \nu \alpha M^- k_{\delta,\delta''} + 2 K_{\delta,\delta''}K_2, $$
 which is negative for $\alpha$ large enough. As a result, we see that this estimate is only valid locally in $y$, as long as $e^{-\alpha (y-y_0)}\geq 1/2$, hence on the set $D^\eps_{y_0,y_1}$ with $y_1 = \ln(2)/\alpha$.
 We stress, however, that the constants $K_2, K_{\delta,\delta''}, k_{\delta,\delta''}, M^-$ are global, i.e. do not depend on $y_0$. As a consequence, $\alpha$ and $y_1$ are uniform over the whole interval $[0,Y]$ and do not depend on $y_0$ either.
 \gap
 
\textbf{Super-solutions.} This time, in the transition zone $\omega_{III}$, we have the estimate
$$ |T| \leq K_2 (|\derp{y}h_{III}| + |\derp{\psi}h_{III}| + |\derp{\psi}^2 h_{III}|) (1+e^{\alpha (y-y_0)} + e^{3\alpha (y-y_0)/2}), $$
in which the power $3/2$ of the exponential comes from the nonlinearity (it was negligeable for subsolutions). This poses difficulties for all the estimates in the transition zone.
 On one hand, we need to be able to obtain smallness of $T$ in a $\delta''$-sized region where the derivatives of $h_{III}$ are small.
 We quickly see that $\delta''$ is going to depend on $\alpha$ if we try to work globally. On the other hand, we will want to prove that, outside of these small areas,
\begin{eqnarray*}
\lambda_0 \nu \alpha M^+ e^{\alpha (y-y_0)}h_{II} - |T|& \geq &\lambda_0 \nu \alpha M^+ e^{\alpha (y-y_0)}k_{\delta,\delta''} - K_{\delta,\delta''}K_2 (1+e^{\alpha (y-y_0)}+e^{3\alpha (y-y_0)/2}) \\&\geq& 2(\psi^0-\psi-\eps).
\end{eqnarray*}  
The negative term in the middle member is dominant for $\alpha$ large.

In both cases, bounding $T$ involves assuming that $e^{\alpha (y-y_0)}$ is bounded, for example by $2$. Then we can prove (\ref{tgamma}) again for a neighbourhood of the curve $\{\psi=\psi^0(y)-\delta\}$,
 and choose $\alpha$ large enough so that
 $$ \lambda_0 \nu \alpha M^+ e^{\alpha (y-y_0)}h_{II} - |T| \geq \lambda_0 \nu \alpha M^+ k_{\delta,\delta''} - 7 K_{\delta,\delta''} K_2 \geq 2\psi^0(y) . $$
 Once again, these estimates are valid for $y\in [y_0,y_0+y_1]$, with $y_1 = \ln(2)/\alpha$, with $\alpha$ independent of $y_0$.\end{proof}

\begin{coro}
Assume that a solution to problem \eqref{vma} exists. Then for any $\gamma>0$, there exist  a positive number $\delta>0$ and  functions $C^\pm(y) \in \Cc^1([0,Y])$ such that $C^+(y)>a(y)>C^-(y)$ and $\|a-C^\pm\|_{W^{1,\infty}} \leq K \gamma$ for some universal constant $K$ such that
\be\label{est:III}
W_\eps^-(\psi, y)\leq w^\eps(\psi, y) \leq W_\eps^+(\psi, y)\ee
for all $(\psi, y)$ in zone $III$.

\end{coro}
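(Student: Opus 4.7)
The plan is to iterate the local-in-$y$ sub- and super-solutions $\Phi^\pm_\eps$ constructed in \propref{altersols}, applying \lemref{comparl} on each strip of width $y_1$. Since $y_1$ is independent of the base point $y_0$ and of $\eps$, one can cover $[0,Y]$ by finitely many strips $D^\eps_{k y_1,y_1}$, $k=0,1,\dots,N-1$, with $N y_1 \geq Y$, and prove by induction on $k$ the bound $\Phi^-_\eps \leq w^\eps \leq \Phi^+_\eps$ on the corresponding strip. The interior differential inequality $\Lc \Phi^-_\eps \leq \Lc w^\eps \leq \Lc \Phi^+_\eps$ is immediate from \eqref{goal2} once we observe that $\gamma > 0$ and $\psi^0(y)-\psi-\eps > 0$ in the interior, so the work reduces to checking the parabolic boundary inequalities on $D^\eps_{k y_1, y_1}$.

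I would first dispose of the lateral boundaries, where the construction \eqref{asdef} builds in (near-)equality: on $\{\psi=0\}$ one has $\Phi^-_\eps = w^\eps_l = w^\eps$ and $\Phi^+_\eps = M^+ e^{\alpha(y-y_0)} \geq M^+ = \overline{M} \geq w^\eps$; on $\{\psi = \psi^0(y)-2\eps\}$ both $\Phi^\pm_\eps$ coincide with $W^\pm_\eps = w^\eps_r = w^\eps$. The initial line $\{y = k y_1\}$ of each strip is where the real work lies: there the exponential factors equal $1$, so in zones $I$ and $II$ the desired bounds reduce to $w^\eps_l(k y_1)+A^-\psi^{4/3}+B^-\psi \leq w^\eps(\psi,k y_1) \leq M^+$ and $M^-\leq w^\eps(\psi,k y_1) \leq M^+$, both of which hold by the global blanket of \propref{prop:blankets}. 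In zone $III$, the required inequality $W^-_\eps(\psi,k y_1) \leq w^\eps(\psi,k y_1) \leq W^+_\eps(\psi,k y_1)$ is handled differently depending on $k$: for $k=0$ it follows from a Taylor expansion of $w_0$ at $\psi^0(0)$, using $w_0''(\psi^0(0)) = 2 a(0)^2$ from \eqref{compvmr} together with the strict inequalities $C^-(0) < a(0) < C^+(0)$, exactly along the lines of the boundary analysis already carried out in the proof of \propref{prop:blankets}; for $k \geq 1$ it is precisely the conclusion of the previous inductive step, restricted to $y = k y_1$. Invoking \lemref{comparl} then closes the induction, and the restriction of the resulting bound to zone $III$, where $h_{III} \equiv 1$ and so $\Phi^\pm_\eps = W^\pm_\eps$, yields \eqref{est:III}.

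The main obstacle is the bookkeeping across successive strips: the exponential factors in \eqref{asdef} are reset to $1$ at each new $y_0$, so the auxiliary functions $\Phi^\pm_\eps$ produced at two consecutive iterations do not agree on the common line $\{y = k y_1\}$. The resolution is that matching of the $\Phi^\pm_\eps$ themselves is never required — only the comparison of $w^\eps$ with the new $\Phi^\pm_\eps$ on that line, and this is supplied in zones $I$ and $II$ by the $y$-uniform blanket of \propref{prop:blankets} and in zone $III$ by the inductive hypothesis. Because $y_1$ depends on none of the free data, the induction terminates after $N \leq \lceil Y/y_1 \rceil$ steps, with $\delta$ and $C^\pm$ uniform over $[0,Y]$.
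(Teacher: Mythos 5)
Your proposal is correct and follows essentially the same route as the paper: an induction over strips of width $y_1$ (uniform in $y_0$ and $\eps$), using the local sub-/super-solutions $\Phi^\pm_\eps$ of Proposition \ref{altersols} together with the comparison Lemma \ref{comparl}, with the parabolic-boundary inequalities on each strip supplied by the blanket bounds of Proposition \ref{prop:blankets} in zones $I$--$II$, by the boundary data of \eqref{vma} on the lateral boundaries, and in zone $III$ by the compatibility condition \eqref{compvmr} at $y=0$ and the induction hypothesis thereafter. Your write-up even spells out the boundary checks (base case via Taylor expansion of $w_0$, resetting of the exponential factors at each new $y_0$) in slightly more detail than the paper does, so nothing further is needed.
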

\begin{proof}
We argue by induction. Let $Y_k:=  k y_1$, $k\in \mathbb N$. Clearly estimate \eqref{est:III} is true for $y=Y_0=0$ and for $\delta$ small enough. As a consequence, choosing $y_0=Y_0=0$ in \eqref{asdef}, we infer that $\Phi^\pm_\eps \gtrless w^\eps$
 on the parabolic boundary of $D^\eps_{0,y_1}$. According to Proposition \ref{altersols} and Lemma \ref{comparl}, $\Phi^\pm_\eps \gtrless w^\eps$ on $D^\eps_{0,y_1}$., $\Phi^\pm_\eps \gtrless w^\eps$ on $D_{0, y_1}^\eps$, and in particular, estimate \eqref{est:III} holds in zone $III$ for all $y\in [Y_0, Y_1]$.

Now, assume that estimate \eqref{est:III} is true in zone $III$ for $y\in [0, Y_k]$ for some $k\geq 0$ such that $Y_k<Y$. Set $y_0=Y_k$ in \eqref{asdef}. By definition of $M^\pm, A^-, B^-$ and using our induction hypothesis, we have $\Phi^\pm_\eps \gtrless w^\eps$ on the parabolic boundary of $D^\eps_{Y_k,Y-Y_k}$. Therefore, according to Proposition \ref{altersols}, $\Phi^\pm_\eps \gtrless w^\eps$ on $D_{Y_k, y_1}^\eps$. It follows that estimate \eqref{est:III} is satisfied in zone $III$ for $y\in [Y_k, Y_{k+1}]$. By induction, we deduce that estimate \eqref{est:III} is true over the whole interval $[0,Y]$.

\end{proof}

\begin{rmk}
 We have obtained that, for every $\mu>0$, there exists $\delta>0$ such that for $\psi^0(y)-\delta \leq \psi \leq \psi^0(y)-2\eps$, we have
 $$ (a(y)^2\pm\mu)(\psi^0(y)-\psi-\eps)^2 \gtrless w^\eps(\psi,y), $$
 which, assuming convergence (which we prove later), proves the right-hand boundary behaviour $w(\psi,y) \sim a(y)^2(\psi^0(y)-\psi)^2$ as $\psi$ approaches $\psi^0(y)$.
\end{rmk}

\begin{lemma}[Existence of solutions] For every $Y>0$, there exists a classical solution to (\ref{vma}) on $D^\eps$, which is positive on $\overline{D^\eps}$.
\end{lemma}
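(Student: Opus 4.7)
The plan is to obtain $w^\eps$ as the limit of solutions to a non-degenerate regularization of (\ref{vma}). For each $\delta>0$, replace $\sqrt{w^\eps}$ by $\sqrt{w^\eps+\delta}$ in the equation, keeping the same initial and lateral data. The resulting problem is a uniformly parabolic quasilinear equation on the bounded cylinder $D^\eps$, whose smooth and strictly positive data match at the two corners $(0,0)$ and $(\psi^0(0)-2\eps,0)$ by the construction of $w^\eps_l$ and $w^\eps_r$. The classical theory of quasilinear parabolic equations (Ladyzhenskaya--Solonnikov--Uraltseva) then yields a unique classical solution $w^{\eps,\delta}\in \Cc^{2+\alpha,1+\alpha/2}(\overline{D^\eps})$, modulo the verification of corner compatibility conditions, which is the main subtlety and is addressed below.

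Next, the a priori bounds of Proposition \ref{prop:blankets} should be transferred to $w^{\eps,\delta}$ uniformly in $\delta$. The constant super-solution $\overline{M}$ from (\ref{def:barM}) still satisfies the super-solution inequality for the regularized operator, since $\sqrt{\cdot+\delta}$ multiplies a second derivative that vanishes on $\overline{M}$. A direct inspection of the proof of Proposition \ref{prop:blankets} shows that $\wme$ also remains a sub-solution of the regularized operator up to an $\mathcal{O}(\sqrt{\delta})$ error, which can be absorbed by shrinking $\underline{A}$, $\underline{B}$, $\underline{M}$ slightly. Lemma \ref{comparl}, whose hypotheses are satisfied by the regularized operator for every $\delta>0$ since $\sqrt{\cdot+\delta}$ is smooth and strictly positive, then yields $\wme-o_{\delta\to 0}(1)\leq w^{\eps,\delta}\leq \overline{M}$ uniformly in $\delta$, and in particular a uniform positive lower bound $w^{\eps,\delta}\geq c_K>0$ on every compact $K\subset D^\eps$.

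With these uniform bounds, the regularized equation is uniformly parabolic on interior compacts with $\delta$-independent coefficients, so interior parabolic Schauder estimates produce uniform $\Cc^{2+\alpha,1+\alpha/2}$ bounds on compact subsets of $D^\eps$. A diagonal Arzel\`a--Ascoli argument then extracts a subsequence $w^{\eps,\delta_n}$ converging locally in $\Cc^{2,1}$ to a function $w^\eps$ that solves the equation of (\ref{vma}) classically in the interior. Continuity up to $\overline{D^\eps}$ and attainment of the prescribed boundary data follow from a standard barrier argument based on the uniform bounds and the smoothness of the data, while positivity on $\overline{D^\eps}$ comes directly from the sub-solution estimate together with the strict positivity of the boundary data. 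The main obstacle I anticipate is the corner compatibility at $y=0$: because the factor $\lambda_0\nu(0)$ is not in general equal to $1$, the first-order compatibility between $w_0(\eps+\cdot)$ and $w^\eps_l$, $w^\eps_r$ holds only up to an error that is $\Oc(\eps)$ thanks to (\ref{compvml}) and (\ref{compvmr}). If this residual incompatibility obstructs a direct application of the classical theorem, it can be cured by a further smoothing of the lateral data in $y$ close to $y=0$, which preserves both the strict positivity and all the a priori estimates and therefore does not affect the limiting procedure.
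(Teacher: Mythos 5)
Your construction is essentially correct, but it follows a different route from the paper. The paper (invoking Lemma 2.1.7 of Ole\u{i}nik--Samokhin) does not introduce a second regularization parameter at all: for fixed $\eps>0$ the sub- and super-solutions of Proposition \ref{prop:blankets} confine any solution to a range $[m_\eps,\overline M]$ with $m_\eps>0$ \emph{globally} on $\overline{D^\eps}$ (the boundary data $w^\eps_l$, $w^\eps_r$, $w_0(\eps+\cdot)$ are strictly positive, and $\wme$ is bounded below by a positive constant depending on $\eps$ up to the boundary), so after truncating $\sqrt{\cdot}$ outside this range the equation is uniformly parabolic with smooth coefficients, classical quasilinear theory applies directly, and the comparison principle shows a posteriori that the solution of the truncated problem lies in the range where the truncation is inactive, hence solves (\ref{vma}). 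Your vanishing-$\delta$ regularization $\sqrt{w+\delta}$ reaches the same conclusion but pays for it with extra machinery: transferring the barriers to the regularized operator (which works, and is even easier than you suggest since $\p_\psi^2\wme\geq 0$ in zones $I$ and $III$, the error being confined to the transition zones and absorbable by taking $\delta$ small), uniform interior Schauder estimates, a diagonal compactness argument, and a separate barrier argument for attainment of the boundary data in the limit. What your route buys is robustness and independence from the a priori positivity of the limit; what it loses is economy, and note that since the positive lower bound is in fact global on $\overline{D^\eps}$ (not merely on interior compacts), the whole limiting procedure is avoidable.

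Two cautions. First, your fallback of ``smoothing the lateral data in $y$ close to $y=0$'' must be formulated as a perturbation vanishing with the approximation parameter, otherwise the limit solves a problem with modified data rather than (\ref{vma}); as written, this step is ambiguous. Second, the first-order corner compatibility you worry about (the factor $\lambda_0\nu(0)$) is only needed if one insists on H\"older regularity of the derivatives up to the closed corner; for a classical solution in the sense used here (equation satisfied in the interior, continuity up to $\Gamma^\eps$ with the prescribed data), zeroth-order compatibility -- which holds exactly at both corners by construction of $w^\eps_l$ and $w^\eps_r$ -- suffices for the classical quasilinear theory, so no modification of the data is required.
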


\begin{proof}
Since we have shown above that a solution, if it exists, is positive, we can view (\ref{vma}) as a quasilinear parabolic equation
 with a strictly positive viscosity (for $\eps>0$ fixed) and apply classical results. This step is identical to \cite{OSbook} Lemma 2.1.7.
\end{proof}

Eventually, we have the following control from above close to the left-hand boundary:
\begin{lemma}
Let $w^\eps$ be the solution of \eqref{vma} on $D^\eps$. Then there  exist $B^+>0, A^+>0$  and $\delta>0$  that
$$
w^\eps(\psi, y)\leq w_l^\eps(y) + B^+ \psi - A^+ \psi^{4/3}
$$
for all $y\in [0,Y]$ and for all $\psi \in [0,\delta]$.\label{lem:sur-sol}
\end{lemma}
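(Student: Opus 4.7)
The plan is to construct an explicit super-solution of \eqref{vma} on the strip $\{0\leq\psi\leq\delta,\ 0\leq y\leq Y\}$ of exactly the form stated in the lemma, and then to invoke \lemref{comparl}. The construction should mirror the zone-$I$ sub-solution piece of \propref{prop:blankets} with the signs reversed: the \emph{concave} correction $-A^+\psi^{4/3}$ produces, through the nonlinear diffusion $-\sqrt{\Phi^+}\,\p_\psi^2\Phi^+$, a \emph{positive} singular term of size $A^+\sqrt{\Phi^+}\,\psi^{-2/3}$, which is precisely what is needed to dominate the right-hand side $2(\psi^0(y)-\psi-\eps)$ of the equation.

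Concretely, set $\Phi^+(\psi,y) := w_l^\eps(y) + B^+\psi - A^+\psi^{4/3}$ with $A^+, B^+, \delta>0$ to be tuned. Since $\p_\psi^2\Phi^+ = -\tfrac{4}{9}A^+\psi^{-2/3}$, a direct computation yields
\[
\Lc\Phi^+ = \lambda_0\nu(y)\,\p_y w_l^\eps(y) + \tfrac{4}{9}A^+\nu(y)^2\sqrt{\Phi^+}\,\psi^{-2/3},
\]
where $\p_y w_l^\eps(y)=\mu(\eps)\,e^{\mu(\eps)y/w_0(\eps)}$ is $\Oc(\eps)$ uniformly in $y$ by \eqref{compvml}, hence a negligible perturbation for $\eps$ small. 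Imposing $A^+\delta^{1/3}\leq B^+/2$ forces $\Phi^+ \geq w_l^\eps + B^+\psi/2 > 0$ throughout the strip, so that $\sqrt{\Phi^+}\geq \sqrt{B^+\psi/2}$; the pointwise PDE inequality $\Lc\Phi^+\geq 2(\psi^0-\psi-\eps)$ then reduces to the single algebraic condition $\tfrac{4}{9}A^+\sqrt{B^+/2}\,\delta^{-1/6}\geq 2\|\psi^0\|_\infty+1$ (say).

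It remains to verify the parabolic-boundary inequalities. On $\{\psi=0\}$, $\Phi^+=w_l^\eps=w^\eps$ by construction. On $\{\psi=\delta\}$, I would use the uniform upper bound $w^\eps\leq\overline{M}$ of \propref{prop:blankets}, reducing the requirement to $\Phi^+(\delta,y)\geq\overline{M}$, which, given $\Phi^+(\delta,y)\geq B^+\delta/2$, is ensured by $B^+\delta\geq 2\overline{M}$. On $\{y=0\}$, the Taylor estimate $w_0(\eps+\psi)\leq w_0(\eps)+\|w_0'\|_\infty\psi$ turns $\Phi^+(\psi,0)\geq w_0(\eps+\psi)$ into $B^+\geq\|w_0'\|_\infty+A^+\delta^{1/3}$. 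These four constraints are compatible: the scaling $B^+\delta\sim\overline{M}$ is essentially forced, and then taking $B^+$ large enough and $A^+$ accordingly (of order $(B^+)^{-2/3}$) satisfies all inequalities with constants depending only on $Y$, $\overline{M}$, $\|\psi^0\|_\infty$ and $\|w_0'\|_\infty$, independently of $\eps$. \lemref{comparl} then delivers $w^\eps\leq\Phi^+$ on the whole strip, which is the announced bound. The only real difficulty in the argument is juggling these four constraints simultaneously, but the scaling $B^+\delta\sim\overline{M}$ makes them compatible.
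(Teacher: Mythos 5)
Your proof is correct and follows essentially the same route as the paper's: an explicit super-solution of the stated shape compared to $w^\eps$ on the strip $\{0\le\psi\le\delta\}$ via \lemref{comparl}, with equality at $\psi=0$, the initial data controlled at $y=0$, and the global bound $\overline{M}$ used on the lateral boundary. The only (harmless) deviation is that the paper multiplies the polynomial part by $e^{\alpha y}$ and takes $\alpha$ large, whereas you drop this factor and instead let the singular term $\tfrac{4}{9}A^+\nu^2\sqrt{\Phi^+}\psi^{-2/3}$ alone dominate $2(\psi^0-\psi-\eps)$ through the constraint $A^+\sqrt{B^+}\,\delta^{-1/6}\gtrsim \|\psi^0\|_\infty$, and your verification that the four constraints are simultaneously satisfiable is sound.
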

\begin{proof}
The proof is identical to part of the proof of Lemma 2.1.8 in \cite{OSbook}, and therefore we only recall the main arguments. We construct super-solutions of the form
$$ \Theta^+_\eps(\psi,y) = w^\eps(0,y) + (B^+\psi - A^+ \psi^{4/3})e^{\alpha y}, $$
with $B^+ > w_0'(0)$ and $A^+>0$, so that $\Theta^+_\eps(\psi,0)$ reaches the value $M^+$ at $\psi=\psi_1<\psi^0(0)/4$ and $\Theta^+_\eps(\psi)\geq w^\eps_0(\psi)$ for all $\psi\in[0,\psi_1]$.
 Then, in the same way as the blanket sub-solution was dealt with in zones $I$ and $II$, it is possible to choose $\alpha>0$ large enough so that $\Lc \Theta^+_\eps(\psi,y) \geq 2\psi^0(y)$,
 and so $\Theta^+_\eps \geq w^\eps$ on the set $\{0\leq y\leq Y~,~0 \leq \psi \leq \psi_1\}$, which yields a bound on the difference quotient and therefore on $p^\eps|_{\psi=0}$. Then, up to a renaming of the coefficients $A^+, B^+$, we have the desired result.
\end{proof}

\subsection{Lipschitz bounds}

We will now look for estimates on $p^\eps:=\derp{\psi}w^\eps$ and $q^\eps := \derp{y}w^\eps$. Bounds for these quantities are obtained in zones $I$ and $II$ in the same way as in \cite{OSbook},
 and by a similar argument to above, we isolate what goes on in zone $III$ and deal with it separately.

We start with the estimates on $p^\eps$:
\begin{propo}\label{lem:p-eps}
There exists a uniform constant $C$, depending only on $\lambda_0, \psi^0$, $\nu$ and $Y$, such that
$$
|\p_\psi w^\eps|\leq C\quad \text{in } \overline{D^\eps}.
$$
Furthermore, there exist $\delta>0$ and functions $E^\pm(y)>0$ such that for all $(\psi,y)$ such that $\psi^0(y)-\delta \leq \psi \leq \psi^0(y)-2\eps$,
\be -E^-(y)(\psi^0(y)-\psi-\eps) \leq  p^\eps(\psi,y)  \leq -E^+(y)(\psi^0(y)-\psi-\eps). \label{alterdpsi} \ee
\end{propo}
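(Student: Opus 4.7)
The starting point is the linear parabolic equation
\be
\label{eq-peps}
\lambda_0 \nu(y) \p_y p^\eps - \nu(y)^2 \sqrt{w^\eps}\, \p_\psi^2 p^\eps - \frac{\nu(y)^2 \p_\psi^2 w^\eps}{2 \sqrt{w^\eps}}\, p^\eps = -2
\ee
satisfied by $p^\eps$, obtained by differentiating the first line of \eqref{vma} with respect to $\psi$. Since $w^\eps>0$ on $\overline{D^\eps}$ by the previous lemmas, this equation fits the framework of \lemref{comparl}. Using \eqref{vma} to substitute $\p_\psi^2 w^\eps$, the coefficient of $p^\eps$ may also be rewritten as $(\lambda_0 \nu \p_y w^\eps - 2(\psi^0-\psi-\eps))/(2 w^\eps)$, a form that will be convenient on zone $III$ where $w^\eps$ is bounded below by $W_\eps^-$.

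For the global $L^\infty$ bound I would follow the strategy of \cite{OSbook} for zones $I$ and $II$, applying \lemref{comparl} to $\pm p^\eps$ against affine-in-$y$ barriers. The parabolic-boundary data for $p^\eps$ are controlled as follows. At $y=0$, $p^\eps(\psi,0)=w_0'(\eps+\psi)$ is bounded by hypothesis. At $\psi=0$, the sandwich $\wme\leq w^\eps\leq\Theta^+_\eps$ provided by \propref{prop:blankets} and \lemref{lem:sur-sol}, combined with the fact that all three functions coincide with $w_l^\eps(y)$ at $\psi=0$ and are linear in $\psi$ to leading order, yields a uniform bound on $p^\eps(0,y)$ for $\eps$ small. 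At $\psi=\psi^0(y)-2\eps$, the refined bounds $W_\eps^-\leq w^\eps\leq W_\eps^+$ from the preceding corollary, whose barriers all meet $w^\eps$ at this boundary, give $|p^\eps(\psi^0(y)-2\eps,y)|\leq 2\sup C^+(y)^2\,\eps=O(\eps)$. The degeneracy of the coefficient $\p_\psi^2 w^\eps/\sqrt{w^\eps}$ as $\psi\to 0$ is absorbed using the lower bound $w^\eps\geq w_l^\eps(y)+A^-\psi^{4/3}+B^-\psi$ of \propref{prop:blankets}, exactly as in \cite{OSbew}.

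The refined estimate \eqref{alterdpsi} on zone $III$ is the new ingredient. I would construct barriers
\be
\Pi^\pm_\eps(\psi,y):=-E^\pm(y)(\psi^0(y)-\psi-\eps)
\ee
where $E^\pm\in\mathcal{C}^1([0,Y])$ are positive functions close to $2a(y)^2$, more precisely close to $2C^\pm(y)^2$ for the same functions $C^\pm$ as in \propref{altersols}. Plugging $\Pi^\pm_\eps$ into \eqref{eq-peps} and using $w^\eps\sim a(y)^2(\psi^0-\psi-\eps)^2$ on zone $III$ (from the preceding corollary), the leading-order contribution reduces to a polynomial expression of the form $-P_y(C^\pm)/C^\pm$ times $(\psi^0-\psi-\eps)$, whose sign can be tuned by adjusting $E^\pm$, precisely as in the proof of \propref{prop:rhb}. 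Checking that $\Pi^\pm_\eps \gtrless p^\eps$ on the parabolic boundary of zone $III$ uses the global $L^\infty$ bound just obtained at $\psi=\psi^0(y)-\delta$, the $O(\eps)$ control at $\psi=\psi^0(y)-2\eps$, and the compatibility condition \eqref{compvmr}, which yields $w_0'(\psi)\sim-2a(0)^2(\psi^0(0)-\psi)$ as $\psi\to\psi^0(0)^-$ at $y=0$. An application of \lemref{comparl} on zone $III$ then gives \eqref{alterdpsi}.

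The main obstacle, analogous to what happens in \propref{altersols}, will be the exponentially growing correction terms produced by the derivatives of $E^\pm$ and of $\psi^0$ in $\lambda_0\nu\p_y\Pi^\pm_\eps$; I would control them by iterating the comparison argument over sub-intervals of length $y_1\sim 1/\alpha$ in $y$, in the same spirit as the bootstrap used to prove the preceding corollary, thereby propagating \eqref{alterdpsi} over the whole interval $[0,Y]$.
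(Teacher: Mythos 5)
Your overall shape (comparison principle with barriers that are linear in $\psi^0-\psi-\eps$ near the right-hand boundary) is the right one, but the key step of the refined bound \eqref{alterdpsi} is missing. Plugging $-E^\pm(y)(\psi^0(y)-\psi-\eps)$ into the equation for $p^\eps$ and using $\sqrt{w^\eps}\approx C^\pm(y)(\psi^0-\psi-\eps)$ (from \eqref{est:III}) does \emph{not} reduce to the polynomial $P_y(C^\pm)$ of \propref{prop:rhb}: it produces a new quadratic in $E$, essentially $Q_y(E)=\frac{\nu^2}{2a}E^2-\lambda_0\nu(\psi^0)'E+2$, whose roots are $2a(y)^2$ and $2/(a(y)\nu(y)^2)$. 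The boundary ordering at $\psi=\psi^0(y)-2\eps$, where $-2C^+(y)^2\eps\leq p^\eps\leq -2C^-(y)^2\eps$, forces $E^+<2C^-(y)^2$ and $E^->2C^+(y)^2>2a(y)^2$, while the interior sub-solution inequality requires $Q_y(E^-)<0$, i.e.\ $E^-$ strictly between the two roots. These two constraints on $E^-$ are compatible only if $2a(y)^2<2/(a(y)\nu(y)^2)$, i.e.\ $a(y)<\nu(y)^{-2/3}$, and this is exactly where the hypothesis $\lambda_0(\psi^0)'\nu^{-1/3}>2$ (strictly stronger than $\sqrt[3]{27/4}$) enters. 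Saying that the sign ``can be tuned by adjusting $E^\pm$, precisely as in \propref{prop:rhb}'' skips this entirely: for $\sqrt[3]{27/4}<\lambda_0(\psi^0)'\nu^{-1/3}\leq 2$ no admissible $E^-$ exists, and your argument never verifies the coexistence of the boundary and interior constraints, which is the main new content of this proposition and the reason for the assumption $>2+\eta$ in Theorem \ref{exist}.

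Two further structural problems. First, you cannot apply \lemref{comparl} on zone $III$ alone with the pure linear barriers $\Pi^\pm_\eps$: on the lateral boundary $\{\psi=\psi^0(y)-\delta\}$ the global bound $|p^\eps|\leq C$ gives neither $p^\eps\leq -E^+(y)\delta<0$ nor $p^\eps\geq -E^-(y)\delta$ (with $E^-$ pinned near $2a^2$), so the required ordering on the parabolic boundary fails. The paper's fix is to glue the linear profile to $\pm(K_0+2Y/\lambda_0)e^{\zeta y}$ through the cutoff $h_{III}$, and the local-in-$y$ iteration is needed to control the resulting transition terms; your iteration remark does not apply to your barrier, which contains no exponential and no cutoff, so no mechanism handles that boundary. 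Second, for the global $L^\infty$ bound, treating the equation for $p^\eps$ as linear with zero-order coefficient $\nu^2\p_\psi^2 w^\eps/(2\sqrt{w^\eps})$ --- or, after substituting \eqref{vma}, a coefficient involving $\p_y w^\eps/w^\eps$ --- is circular: neither $\p_\psi^2 w^\eps$ nor $q^\eps=\p_y w^\eps$ is controlled at this stage (those bounds come later and use the present proposition). The workable form is the quasilinear one, $\lambda_0\nu\p_y p^\eps-\frac{\nu^2}{2\sqrt{w^\eps}}\,p^\eps\p_\psi p^\eps-\nu^2\sqrt{w^\eps}\p_\psi^2 p^\eps=-2$, to which \lemref{comparl} (with $b$ depending on the unknown) applies; constant and affine-in-$y$ barriers then annihilate the problematic term, giving $-K_0-2y/\lambda_0\leq p^\eps\leq K_0$ on all of $D^\eps$ directly, without any zone decomposition.
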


\begin{proof}
 The equation on $p^\eps$ is the following:
\begin{equation} \left\{ \begin{array}{rcl} \lambda_0 \nu(y)\derp{y}p^\eps - \frac{\nu(y)^2}{2\sqrt{w^\eps}}p^\eps\derp{\psi}p^\eps - \nu(y)^2\sqrt{w^\eps}\derp{\psi}^2 p^\eps & = & -2 \\
p^\eps|_{y=0} & = & (w^\eps_0)'(\eps+\psi) . \end{array} \right. \label{dpsiw} \end{equation}

The sub- and super-solutions constructed in Propositions \ref{prop:blankets} and \ref{altersols}  and Lemma \ref{lem:sur-sol} ensure that
$$\ba
 0< B^- \leq p^\eps_{|\psi=0}\leq B^+,\\
-2 C^+(y)^2 \eps\leq  p^\eps_{|\psi=\psi^0(y)-2\eps}\leq -2 C^-(y)^2 \eps.\ea
$$

Therefore there exists a uniform constant $K_0$ (depending on $B^\pm$, $C^\pm$, $w_0$ and $Y$) such that $|p^\eps|\leq K_0$ on $\Gamma^\eps$.
 Then it is easily checked that $K_0$ (resp. $-K_0-2y/\lambda_0$) is a super- (resp. a sub-) solution of \eqref{dpsiw}. Therefore, according to Lemma \ref{comparl}, which does apply to the operator $\Mc$
 associated with the equation, which is
 $$ \Mc f = \lambda_0 \nu(y)\derp{y}f - \frac{\nu(y)^2}{2\sqrt{w^\eps}} f\derp{\psi}f - \nu(y)^2\sqrt{w^\eps} \derp{\psi}^2 f , $$
we have
\begin{equation}
-K_0 - \frac{2 y}{\lambda_0}\leq  p^\eps\leq K_0 ~~ \text{in}~~ D^\eps,
\label{blanketp} \end{equation}
and the uniform bound is proved.
\gap

It only remains to prove the refined bound close to the right-hand boundary. We use the same method as in Proposition \ref{altersols},
 with the only difference that we do not need to distinguish between zones $I$ and $II$. More precisely, we define
 $$ 
  \pi^\pm(\psi,y) := \pm(1-h_{III})\left(K_0+\frac{2Y}{\lambda_0}\right) e^{\zeta y} - h_{III}E^\pm(y)(\psi^0(y)-\psi-\eps) ,
$$
for $\zeta>0$ sufficiently large and for some positive functions $E^\pm$ to be chosen so that 
 $\pi^\pm(\psi,y) \gtrless p^\eps$ on $\Gamma^\eps$, and $\Mc\pi^\pm \gtrless -2$.
The inequalities $\pi^\pm(\psi,y) \gtrless p^\eps$ are satisfied on the right-hand boundary $\{\psi=\psi^0-2\eps\}$ as soon as
$0<E^+(y)<2 C_-(y)^2$ and $E^-(y)>2 C_+(y)^2$. We now compute $\Mc \pi^\pm$ to get the other constraint: setting $C_0:= K_0+{2Y}/{\lambda_0}$, we have
\begin{eqnarray}
\nonumber\Mc \pi^\pm&=& \pm \zeta \lambda_0\nu C_0(1-h_{III})  e^{\zeta y} \\&&+ h_{III}\left[ -E^\pm \lambda_0\nu(\psi^0)' - \frac{\pi^\pm\nu^2}{2\sqrt{w^\eps}} E^\pm - \lambda_0 \nu(E^\pm)' (\psi^0-\psi-\eps)\right] + T'\\
\nonumber&=& \pm \zeta \lambda_0 \nu C_0(1-h_{III}) e^{\zeta y} - h_{III}(1-h_{III})\frac{\pm C_0  e^{\zeta y}\nu^2}{\sqrt{w^\eps}} E^\pm \nonumber\\
&&\nonumber+ \frac{\nu^2}{2\sqrt{w^\eps}} h_{III} (1-h_{III}) E^\pm(y)^2 (\psi^0-\psi-\eps)+ T'\\
\label{mpi}&&+ h_{III}\left[-E^\pm \nu\lambda_0(\psi^0)' + \nu^2\frac{\psi^0-\psi-\eps}{2\sqrt{w^\eps}} (E^\pm )^2- \lambda_0 \nu(E^\pm)' (\psi^0-\psi-\eps) \right] ,
\end{eqnarray}
where the term $T'$ is, as in the proof of Proposition \ref{altersols}, a transition term involving derivatives of $h_{III}$. We first focus on the last line. 
Using Proposition \ref{altersols}, we note that $w_r^\eps(y)\sim a(y)\eps^2 \lessgtr C^\pm(y)^2 \eps^2$, so that
$\sqrt{w^\eps}$ is surrounded by $C^\pm(\psi^0-\psi-\eps)$, and thus $\frac{\psi^0-\psi-\eps}{2\sqrt{w^\eps}}$ is $\Oc(1)$.
 The third term inside the brackets in \eqref{mpi} is of lower order, and can be dealt with by taking some margin on $E^\pm$ and eventually adjusting $\delta$.
 Thus we leave it aside for the time being and we will explain the details later.

According to  Proposition \ref{altersols}, we have
$$
-E^\pm \lambda_0 \nu(\psi^0)' + \nu^2\frac{\psi^0-\psi-\eps}{2\sqrt{w^\eps}} (E^\pm)^2 \gtrless  -E^\pm \lambda_0\nu (\psi^0)' + \frac{\nu^2}{2C^\pm}(E^\pm)^2 ,
$$
hence we seek solutions of the inequations
\begin{equation} -E^\pm \lambda_0 \nu (\psi^0)' + \frac{\nu^2}{2C^\pm}(E^\pm)^2 +2 \gtrless \pm \gamma\label{fpnd} \end{equation} for some $\gamma>0$.
Given that $C^\pm(y)$ are close to $a(y)$, let us focus on the polynomial
$$ Q_y(E) = \frac{\nu(y)^2}{2a(y)}E^2 -\lambda_0 \nu(y)(\psi^0)'(y) E + 2. $$
We would like $Q_y(E^\pm) \gtrless \pm \gamma$. As $Q_y'(0)<0$, we can choose $E^+(y)$ positive and small, for instance. As for $E^-$, we make the following observations.
 First, notice that $2a(y)^2$ is always a root of $Q_y$. Hence the other root of $Q_y$ is $2/(a(y)\nu(y)^2)$.
 We want $E^-(y)$ to belong to the interval whose bounds are $2a(y)^2$ and $2/(a(y)\nu(y)^2)$, so that $Q_y(E^-(y))<0$. 
 This is possible and  coherent with the boundary constraint $E^-(y)>2 a(y)^2$ if and only if $2a(y)^2 < 2/(a(y)\nu(y)^2)$, i.e. $a(y)<\nu(y)^{-2/3}$. Noticing that
$$
P_y(\nu(y)^{-2/3})=2-\nu(y)^{-1/3}\lambda_0 (\psi^0)'(y),
$$
it suffices that $P_y(\nu^{-2/3})>0$ to have the desired inequality between $a$ and $\nu$, given the variations of $P_y$. This yields the condition
$$
\frac{\lambda_0 (\psi^0)'(y)}{\nu(y)^{1/3}}>2,
$$
which is the one we announced in the statement of \thref{exist}. Hence, if this is satisfied, we first choose the continuous function $C^+(y)$ in Proposition \ref{altersols} so that
$$ a(y)^2<C^+(y)^2<1/(a(y)\nu(y)^2) $$
for all $y$, and then, by continuity, we can choose $E^-(y)$ so that $2 (C^+)^2<E^-<2/(a\nu^2)$. Thus \eqref{fpnd} is satisfied for $E^-$, provided $C^-$ is chosen sufficiently close to $a$.
\gap

At this stage, we have proved that
\begin{equation}
\begin{array}{rcl} \Mc \pi^\pm &\gtrless &  \pm \zeta \lambda_0 C_0 \nu (1-h_{III}) e^{\zeta y}  - h_{III}(1-h_{III})\frac{\pm C_0  \nu^2 e^{\zeta y}}{\sqrt{w^\eps}} E^\pm \\&&+ \frac{\nu^2}{2\sqrt{w^\eps}} h_{III} (1-h_{III}) E^\pm(y)^2 (\psi^0-\psi-\eps)\\
&&+ h_{III}\left[-2 \pm \gamma - \lambda_0 \nu (E^\pm)' (\psi^0-\psi-\eps) \right] + T'. \end{array} \label{mpi2}
\end{equation}
 Now, choosing $\delta$ sufficiently small (depending on $\gamma$), we have
 $$
 \left| \lambda_0 \nu (E^\pm)' (\psi^0-\psi-\eps)\right| \leq \frac{\gamma}{2},\quad \forall (\psi,y) \in \{\psi^0(y)-2\delta, \psi^0(y)-2\eps\}.
 $$
 Furthermore, provided $\zeta$ is sufficiently large (depending on $\delta$, $E^\pm$ and $C^\pm$), the first term on the first line of (\ref{mpi2}) can absorb the others (bar $T'$) and still be
 $\gtrless (-2 \pm \gamma/2) (1-h_{III})$. The transition term $T'$ is treated exactly as $T$ was in the zone $\omega_{III}$ in Proposition \ref{altersols}.
 With $\delta$ now fixed, we will obtain a control of $T'$ which is local in $y$, and we notice that we can cover $D^\eps$ with a finite number of these local estimates thanks to the global bounds on $p^\eps$
 in (\ref{blanketp}). Since the arguments are identical to the ones of Proposition \ref{altersols}, we leave them to the reader.
\end{proof}

\begin{rmk}
Notice that 
$$
E^+(y)<2 C^-(y)^2 < 2 a(y)^2 < 2 C^+(y)^2 < E^-(y),
$$
and that one can take for instance $C^\pm(y)= a(y)\pm \mu$, $E^\pm(y)= 2(a(y)\pm 2 \mu)^2$, for any $\mu>0$ small enough. As emphasised before, the constant $\delta$ then depends on $\mu$.

\end{rmk}

We now address the estimates on $q^\eps=\p_y w^\eps$. We first give uniform estimates in zones $I$ and $II$, which are proved in \cite{OSbook}. We briefly recall the main arguments of the proof.

\begin{lemma}[\cite{OSbook}, Lemmata 2.1.8 to 2.1.13]
Let $\delta>0$ be arbitrary.  The function  $q^\eps$ is uniformly bounded in $\eps$ on
 $$ \overline{\Delta_\delta} := \{(\psi,y)~|~ 0 \leq y \leq Y \mathand{and} 0 \leq \psi \leq \psi^0(y)-\delta\},
  $$
  and there exists $\beta\in (0,1/2)$ such that $(w^\eps)^{\beta-1}\p_y w^\eps$ is bounded uniformly in $\eps $ on $\overline{\Delta_\delta}$. 
  
 \label{lipzone12}
\end{lemma}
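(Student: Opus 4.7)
My plan will follow \cite{OSbook}, Lemmata 2.1.8--2.1.13. The first step is to derive a PDE for $q^\eps := \derp{y} w^\eps$ by differentiating \eqref{vma} in $y$ and using \eqref{vma} itself to eliminate $\sqrt{w^\eps}\derp{\psi}^2 w^\eps$. This yields a quasilinear parabolic equation of the schematic form
$$
\lambda_0 \nu(y) \derp{y} q^\eps - \nu(y)^2 \sqrt{w^\eps} \derp{\psi}^2 q^\eps - \frac{\lambda_0 \nu(y)}{2 w^\eps}(q^\eps)^2 + a_1(\psi, y) q^\eps + a_0(\psi, y) = 0,
$$
with $a_0, a_1$ uniformly bounded on $\overline{D^\eps}$ thanks to the $L^\infty$ bounds already established. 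The negatively signed term $-(q^\eps)^2/w^\eps$ is favorable for the maximum principle: at an interior positive maximum of $q^\eps$, combining $\derp{y} q^\eps \geq 0$ and $\derp{\psi}^2 q^\eps \leq 0$ with the uniform upper bound $w^\eps \leq \overline{M}$ yields $(q^\eps)^2 \lesssim w^\eps(1 + q^\eps) \lesssim 1 + q^\eps$, so $q^\eps$ is controlled from above; the minimum is treated symmetrically.

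Combining this interior comparison with uniform bounds for $q^\eps$ on the parabolic boundary of $D^\eps$ --- at $y = 0$ the equation \eqref{vma} itself expresses $q^\eps(\psi, 0)$ in terms of $w_0$, giving $q^\eps(\psi, 0) = \Oc(\eps + \psi)$ near $\psi = 0$ by \eqref{compvml}; at $\psi = 0$ one computes $q^\eps(0, y) = (w^\eps_l)'(y) = \Oc(\eps)$ using $\mu(\eps) = \Oc(\eps)$; at $\psi = \psi^0(y) - 2\eps$, $(w^\eps_r)'(y) = \Oc(\eps^2)$ by \eqref{compvmr} --- the comparison principle of \lemref{comparl}, applied to the equation above with affine super- and sub-solutions of the form $\pm(C_1 + C_2 y)$, yields the claimed uniform $L^\infty$ bound on $q^\eps$, in particular on $\overline{\Delta_\delta}$.

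For the weighted bound I will set $z^\eps := (w^\eps)^{\beta - 1} q^\eps$ and derive its PDE; the chain rule produces additional lower-order terms whose coefficients involve $(w^\eps)^{\beta - 1/2}\derp{\psi} w^\eps$ and $(w^\eps)^{\beta - 3/2}(\derp{\psi} w^\eps)^2$. Using the blanket lower bound $w^\eps \gtrsim B^- \psi$ near $\psi = 0$ from \propref{prop:blankets} and the uniform bound on $\derp{\psi} w^\eps$ from \propref{lem:p-eps}, these coefficients behave respectively like $\psi^{\beta - 1/2}$ and $\psi^{\beta - 3/2}$ as $\psi \to 0$. The hard part will be to absorb these singular coefficients into the degenerate diffusion $\sqrt{w^\eps} \derp{\psi}^2 z^\eps \sim \psi^{1/2}\derp{\psi}^2 z^\eps$ when running the comparison argument; the strict constraint $\beta < 1/2$ is precisely what makes this absorption possible, and should allow an explicit barrier of the form $C \pm K\psi^{1 - \beta}$ near $\psi = 0$ to close the maximum-principle argument. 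The boundary values of $z^\eps$ remain $\Oc(1)$ uniformly in $\eps$: at $\psi = 0$, $z^\eps = \Oc(\eps^\beta)$ since $q^\eps = \Oc(\eps)$ and $w^\eps \gtrsim \eps$; at $y = 0$, the refined estimate $q^\eps|_{y=0} = \Oc(\eps + \psi)$ above gives $z^\eps|_{y=0} = \Oc((\eps + \psi)^\beta) = \Oc(1)$, completing the scheme.
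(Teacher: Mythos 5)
There is a genuine gap at the central step, namely the claimed upper bound on $q^\eps$ by a direct maximum principle. Differentiating \eqref{vma} in $y$ and eliminating $\sqrt{w^\eps}\p_\psi^2 w^\eps$ indeed gives an equation of the form you wrote (it is \eqref{dyw} in the paper), with the quadratic term entering as $-\frac{\lambda_0\nu}{2w^\eps}(q^\eps)^2$ on the same side as $\lambda_0\nu\p_y q^\eps-\nu^2\sqrt{w^\eps}\p_\psi^2 q^\eps$. But at an interior \emph{positive maximum}, where $\p_y q^\eps\geq 0$ and $\p_\psi^2 q^\eps\leq 0$, the equation yields
$$
\frac{\lambda_0\nu}{2w^\eps}\,(q^\eps_{\max})^2 \;=\; \lambda_0\nu\,\p_y q^\eps-\nu^2\sqrt{w^\eps}\,\p_\psi^2 q^\eps + a_1 q^\eps_{\max}+a_0 \;\geq\; a_1 q^\eps_{\max}+a_0,
$$
i.e.\ a \emph{lower} bound on $(q^\eps)^2/w^\eps$, not the upper bound $(q^\eps)^2\lesssim w^\eps(1+q^\eps)$ you assert; the inequality is satisfied by arbitrarily large $q^\eps_{\max}$ and gives no control. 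The sign of the nonlinearity is favourable only at a negative minimum (this is why the lower bound on $q^\eps$ does work), and this is precisely the obstruction emphasised in the paper: ``the nature of the nonlinearity in the equation on $q^\eps$ makes a similar maximum principle impossible to prove.'' The actual proof (Ole\u{\i}nik--Samokhin, Lemmata 2.1.8--2.1.13) circumvents it by a \emph{nonlinear change of unknown}, setting $\p_\psi w^\eps=\vphi(S)$ with $\vphi$ essentially exponential, obtaining a maximum principle for a modified quantity that bounds $\sqrt{w^\eps}\p_\psi^2 w^\eps$, and then recovering the bound on $q^\eps$ from the equation itself; the weighted bound on $(w^\eps)^{\beta-1}q^\eps$ is a further separate argument. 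Without this (or an equivalent) device, your scheme does not close, and the subsequent weighted step inherits the same gap since it presupposes control of $q^\eps$ from above.

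Two secondary points. First, the coefficient you call $a_1$ contains $\frac{\psi^0(y)-\psi-\eps}{w^\eps}$, which is \emph{not} uniformly bounded on $\overline{\Delta_\delta}$: near $\psi=0$ one only has $w^\eps\gtrsim w_l^\eps(y)+B^-\psi$, so $a_1\sim \psi^{-1}$ as $\eps\to 0$; consequently the comparison with affine barriers $\pm(C_1+C_2y)$ via Lemma \ref{comparl} is not justified as stated (and the quadratic term does not fit the hypotheses of that lemma without an a priori bound on $q^\eps$, which is circular). Second, the interior (zone $II$) bound in the paper is obtained from standard parabolic regularity (using that $w^\eps$ is bounded away from zero there and uniformly Lipschitz in $\psi$), not from a global comparison argument; this part of your plan could be replaced by that simpler observation, but it does not repair the main sign problem above.
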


\begin{proof}
First, standard parabolic regularity estimates from \cite{Fabook} yield uniform bounds for $q^\eps$ in zone $II$, since in that zone $w^\eps$ is bounded away from zero uniformly in $\eps$
 and uniformly Lipschitz continuous with respect to $\psi$ according to Proposition \ref{lem:p-eps}. Hence it remains to look at a neighbourhood of the left-hand boundary.
 Using equation \eqref{vm}, we find that the equation on $q^\eps = \derp{y}w^\eps$ is
 \begin{multline} \lambda_0 \nu\derp{y}q^\eps -2 \lambda_0 \chi' \chi'' q^\eps+ \frac{1}{w^\eps}\left((\psi^0(y)-\psi-\eps)q^\eps - \frac{\lambda_0\nu}{2}(q^\eps)^2\right) - \nu^2\sqrt{w^\eps}\derp{\psi}^2 q^\eps \\
 = 2(\psi^0)'(y) - \frac{8\chi'\chi''}{\nu} (\psi^0-\psi-\eps). \label{dyw} \end{multline}
It can be easily checked that this equation has a minimum principle, in the sense that if $q^\eps$ has a negative minimum in $D_\delta$, then it is possible to bound its value from below.
 Furthermore, on the left-hand boundary $\psi=0$, we have
$$
q^\eps_{|\psi=0}=\p_y w_l^\eps(y)= \mu(\eps)e^{\mu(\eps) y/w_0(\eps)}= \Oc(\eps),
$$
and therefore $q^\eps$ is bounded on the left-hand boundary, which  leads to an overall lower bound for $q^\eps$.
 However, the nature of the nonlinearity in the equation on $q^\eps$ makes a similar maximum principle impossible to prove (we will make this remark again shortly).
 Ole\u{i}nik and Samokhin then use a nonlinear change of unknown to make a maximum principle appear on a modified equation, allowing them to bound $\sqrt{w^\eps}\derp{\psi}^2 w^\eps$,
 and therefore $q^\eps$ by simply using the equation. They finally show a bound for $(w^\eps)^{\beta-1}q^\eps$ for $\beta\in (0,1/2)$, which we saw the usefulness of in the proof of \cororef{equivcoro}.
\end{proof} 

We now seek the explicit and precise behaviour of $q^\eps$ in zone $III$. As it was shown in \propref{altersols} that $w^\eps$ would vanish quadratically at $\psi=\psi^0(y)-\eps$,
 we expect $q^\eps$ to decay linearly. We prove the following.
 
\begin{propo}
There exists  a constant $C_0>0$ such that
\be | q^\eps(\psi,y)|  \leq C_0(\psi^0(y)-\psi-\eps), \label{alterdy}
\ee
for all $(\psi,y)\in D^\eps$ such that $\psi\geq \psi^0(y)-\delta$.
 \label{alterd}
\end{propo}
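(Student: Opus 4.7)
I would follow the architecture of Proposition~\ref{lem:p-eps}: construct sub- and super-solutions of the quasilinear equation \eqref{dyw} governing $q^\eps$ by combining the blanket Lipschitz bound of Lemma~\ref{lipzone12} in zones I and II with a refined linear-in-$\xi$ barrier in zone III, using the same partition of unity $(h_I, h_{II}, h_{III})$. Write $\xi := \psi^0(y) - \psi - \eps$ throughout.

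The first step is to verify the target bound $|q^\eps| \leq C_0\xi$ on the parabolic boundary of $D^\eps$ inside zone III. On $\{y=0\}$, evaluating \eqref{vma} at $y=0$ together with the compatibility condition \eqref{compvmr} and $w_0''(\psi^0(0)) = 2a(0)^2$ gives $q^\eps(\psi,0) \sim 2 a(0)^2 (\psi^0)'(0)\,\xi$ as $\xi \to 0$, so $|q^\eps(\psi,0)| = \Oc(\xi)$ in zone III. On $\{\psi = \psi^0(y) - 2\eps\}$, differentiating the Dirichlet datum $w^\eps = w_r^\eps(y)$ along the moving curve gives $q^\eps = (w_r^\eps)'(y) - p^\eps\,(\psi^0)'(y)$; since $(w_r^\eps)'(y) = \Oc(\eps^2)$ and $|p^\eps| = \Oc(\eps)$ by the refined bound of Proposition~\ref{lem:p-eps}, we obtain $|q^\eps| = \Oc(\eps) = \Oc(\xi)$ on this boundary. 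On $\{\psi = \psi^0(y)-\delta\}$, Lemma~\ref{lipzone12} provides a uniform bound on $|q^\eps|$, which translates into $|q^\eps|\leq C_0\xi$ for $C_0$ large enough compared with $1/\delta$.

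The second step is the construction of barriers
\[\tilde\pi^\pm(\psi,y) = \pm(1 - h_{III}(\psi,y))\,\tilde K e^{\tilde\zeta y} + h_{III}(\psi, y)\, F^\pm(\psi, y),\]
in analogy with the $\pi^\pm$ from Proposition~\ref{lem:p-eps}. Applying the operator $\mathcal K$ associated with \eqref{dyw} to a linear ansatz $F = D(y)\xi$ yields
\[\mathcal K(D\xi) - \mathrm{RHS} = (\lambda_0\nu D - 2)(\psi^0)' + \xi\,[\text{lower order}] + \frac{\xi^2}{w^\eps}\,D\Big(1 - \frac{\lambda_0\nu D}{2}\Big),\]
a quadratic in $D$ whose discriminant is the perfect square $(\lambda_0\nu(\psi^0)'-\xi^2/w^\eps)^2$, with critical roots $D_-=2/(\lambda_0\nu)$ and $D_+=2(\psi^0)'\,w^\eps/\xi^2$. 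Exploiting the bounds on $w^\eps/\xi^2$ from the Corollary after Proposition~\ref{altersols}, together with the identity $\lambda_0\nu(\psi^0)' = \nu^2 a + 1/a^2$ coming from $P_y(a) = 0$, I would choose $D(y)$ so that the super- and sub-solution inequalities hold in the interior of zone III, supplementing $F^\pm$ if needed with a higher-order correction $E(y)\xi^2$ to reconcile the PDE constraint with the boundary constraint at $\{\psi=\psi^0(y)-2\eps\}$. In zones I and II and in the transition zone, the analysis of Proposition~\ref{lem:p-eps} applies verbatim: the exponential blanket $\tilde K e^{\tilde\zeta y}$ dominates provided $\tilde K$ is chosen above the blanket Lipschitz bound and $\tilde\zeta$ large enough to absorb the derivatives of the partition functions, at the cost of iterating on sub-intervals of $[0,Y]$ of length $\ln 2 /\tilde\zeta$.

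The main obstacle is twofold. First, equation \eqref{dyw} is genuinely quasilinear, so Lemma~\ref{comparl} does not apply directly; I would pass to the difference $Z = \tilde\pi^\pm - q^\eps$, whose equation is linear in $Z$ with zeroth-order coefficient $\frac{1}{w^\eps}\bigl(\xi - \tfrac{\lambda_0\nu}{2}(\tilde\pi^\pm + q^\eps)\bigr)$ that blows up only as $1/\xi$ and has a definite sign by construction, so that a comparison argument adapted to the degenerate parabolic structure, as used by Ole\u{i}nik and Samokhin in \cite{OSbook}, applies. Second, the strict inequality $C^-(y) < a(y)$ forces the upper critical root of the quadratic to lie below the value $2a^2(\psi^0)'$ expected of $q^\eps/\xi$ at the right-hand boundary; this tension is resolved using the sharper location-dependent bound $\xi^2/w^\eps \leq 1/a^2$ valid near $\psi = \psi^0(y)-2\eps$, together with the quadratic correction $E(y)\xi^2$ in the barrier. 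Applying the resulting comparison yields $\tilde\pi^- \leq q^\eps \leq \tilde\pi^+$ on $D^\eps$ and the bound \eqref{alterdy} in zone III.
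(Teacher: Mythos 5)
Your strategy of building barriers for $q^\eps$ directly on equation \eqref{dyw} runs into a genuine obstruction that your two proposed fixes do not resolve, and it is precisely the obstruction that forces the paper onto a different route. First, the claim that the zeroth-order coefficient of the difference $Z=\tilde\pi^\pm-q^\eps$ ``has a definite sign by construction'' is not justified in the super-solution direction: that coefficient is $\frac{1}{w^\eps}\bigl(\xi-\tfrac{\lambda_0\nu}{2}(\tilde\pi^+ + q^\eps)\bigr)$ and contains $q^\eps$ itself, for which no a priori upper bound is available (that upper bound is exactly what is being proved); this is the reason the paper states that \eqref{dyw} admits a minimum principle but no maximum principle. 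Second, and more fatally, the linear ansatz $F=D(y)\xi$ cannot satisfy simultaneously the interior differential inequality and the ordering on the parabolic boundary of zone III. Writing $s=\xi^2/w^\eps$, your quadratic $g(D)=(\lambda_0\nu D-2)(\psi^0)'+sD\bigl(1-\tfrac{\lambda_0\nu D}{2}\bigr)$ is nonnegative only for $D$ between $2/(\lambda_0\nu)$ and $2(\psi^0)'/s$; since the lower bound on $w^\eps$ from Proposition \ref{altersols} only gives $s\leq 1/C^-(y)^2$ with $C^-<a$ strictly, the admissible $D$ must stay below $2(\psi^0)'(C^-)^2<2(\psi^0)'a^2$. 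But the ordering constraints force $D$ at least as large as $2(\psi^0)'a^2$: at $y=0$ the compatibility condition \eqref{compvmr} gives $q^\eps(\psi,0)\sim 2(\psi^0)'(0)a(0)^2\,\xi$, and on $\{\psi=\psi^0(y)-2\eps\}$ the only available control of $q^\eps$ is through $p^\eps$, namely $q^\eps=-(\psi^0)'p^\eps+\Oc(\eps^2)$ with $-p^\eps$ possibly as large as $2C^+(y)^2\eps$, $C^+>a$. The required value of $D$ thus sits at or above the upper root of $g$, where the inequality fails, with a mismatch of order $\|C^+-C^-\|$ that does not vanish with $\eps$; and your correction $E(y)\xi^2$ only perturbs the $\Oc(\xi)$ terms of $\mathcal K F-\mathrm{RHS}$, not this order-one balance, so it cannot close the gap. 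The construction is at best critically degenerate (the expected profile $2a^2(\psi^0)'\xi$ annihilates $g$ exactly), leaving no margin to absorb transition terms.

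The paper avoids this dead end by a change of unknown rather than a barrier: it sets $r^\eps=-q^\eps/p^\eps$, which is legitimate in zone III because \eqref{alterdpsi} gives $-p^\eps>0$ there and $|p^\eps|\asymp\psi^0(y)-\psi-\eps$. The key point is the identity \eqref{identity-pq}, $\nu^2\sqrt{w^\eps}\p_\psi p^\eps=\lambda_0\nu q^\eps-2(\psi^0-\psi-\eps)$, which makes the dangerous singular bracket cancel, so the resulting equation \eqref{reqn} on $r^\eps$ enjoys \emph{both} a maximum and a minimum principle with $\eps$-uniform bounds at interior extrema. Moreover the boundary data for $r^\eps$ are clean precisely because the ratio divides out the uncertain amplitude of $p^\eps$: on $\{\psi=\psi^0(y)-2\eps\}$ one gets $r^\eps=(\psi^0)'(y)+\Oc(\eps)$, at $y=0$ the compatibility conditions give a bound, and on $\{\psi=\psi^0(y)-\delta\}$ interior Schauder estimates apply. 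The uniform bound on $r^\eps$ then yields $|q^\eps|\leq\|r^\eps\|_\infty|p^\eps|\leq C_0(\psi^0(y)-\psi-\eps)$. If you wish to salvage a barrier-type argument, you would first need a boundary estimate for $q^\eps$ on $\{\psi=\psi^0(y)-2\eps\}$ sharper than what $p^\eps$ provides, which is essentially what the ratio trick delivers for free.
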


\begin{proof}
 As mentioned before, if $q^\eps$ has an interior minimum $q_\text{min}^\eps$, then using \eqref{dyw} we find that
 $$
-2 \lambda_0 \chi' \chi''q_\text{min}^\eps + \frac{1}{w^\eps}\left((\psi^0(y)-\psi-\eps)q^\eps_\text{min} - \frac{\lambda_0\nu}{2}(q^\eps_\text{min})^2\right) \geq -C,
 $$
 for some constant $C$, and therefore $q_\text{min}^\eps\geq -\bar Q$, for some constant $\bar Q$ depending only on the data, and independent of $\eps$. 
 However, it is impossible to do the same for an interior maximum,
 due to the zero-order nonlinearity, in which, when one examines a positive maximum, $q^\eps$ and $-(q^\eps)^2$ have opposite signs.
 One way around this is to make a nonlinear change of unknown. In \cite{OSbook} Lemma 2.1.12, Ole\u{i}nik and Samokhin set $\derp{\psi}w = \vphi(S)$ (the function $\vphi$ is essentially exponential),
 and examine $\sqrt{w}\derp{\psi}S$, which gives them a bound on $\sqrt{w}\derp{\psi}^2w = \derp{y}w$ according to the equation. However, we are looking for more than just a bound on $q^\eps$,
 we are seeking a more precise behaviour in zone $III$.
 Rather, we consider the unknown
 $$ r^\eps = -\frac{q^\eps}{p^\eps} = -\frac{\derp{y}w^\eps}{\derp{\psi}w^\eps}. $$
 We can do this because we have inequality (\ref{alterdpsi}), which implies that $-p^\eps >0$ in zone $III$. We will show that $r^\eps$ is uniformly bounded,
  which will prove that $q^\eps = \Oc (\psi^0(y)-\psi-\eps)$ as $\psi$ goes to $\psi^0(y)-2\eps$.
 
Using equations (\ref{dpsiw}) and (\ref{dyw}), we get
 \begin{eqnarray} \lambda_0 \nu\derp{y}r^\eps &=& \frac{2}{p^\eps}(r^\eps-(\psi^0)'(y)) - \frac{\nu^2\sqrt{w^\eps}}{p^\eps}(\derp{\psi}^2 q^\eps + r^\eps \derp{\psi}^2 p^\eps)\label{dyr}\\
 &&+ 2 \lambda_0 \chi' \chi'' r^\eps + 8 \frac{\chi' \chi''}{\nu p^\eps} \left( \psi^0(y)-\psi-\eps\right)\nonumber\\
 &&+ \frac{r^\eps}{2w^\eps}\left[ -2(\psi^0(y)-\psi-\eps) + {\lambda_0\nu } q^\eps - \nu^2\sqrt{w^\eps} \p_\psi p^\eps \right]\label{dyr2}.
  \end{eqnarray}
   Remember that $p^\eps=\derp{\psi}w^\eps$, so, by using (\ref{vma}),
    \be\label{identity-pq} \nu^2\sqrt{w^\eps}\derp{\psi}p^\eps = \nu^2\sqrt{w^\eps}\derp{\psi}^2 w^\eps = \lambda_0 \nu q^\eps - 2(\psi^0(y)-\psi-\eps). \ee
  Therefore the term in brackets in \eqref{dyr2} is zero.
  
 Then, differentiating $r$ with respect to $\psi$ gives us the following equalities:
 \begin{eqnarray*} \derp{\psi}r^\eps & = & \frac{-1}{p^\eps} (\derp{\psi}q^\eps + r^\eps \derp{\psi}p^\eps) \\
  \sqrt{w} \derp{\psi}^2 r^\eps & = & -\frac{\sqrt{w^\eps}}{p^\eps}(\derp{\psi}^2 q^\eps + r^\eps \derp{\psi}^2 p^\eps) + 2\sqrt{w^\eps}\frac{\derp{\psi}p^\eps}{(p^\eps)^2}(\derp{\psi}q^\eps + r^\eps \derp{\psi} p^\eps) \\
  & = &  -\frac{\sqrt{w^\eps}}{p^\eps}(\derp{\psi}^2 q^\eps + r^\eps \derp{\psi}^2 p^\eps) - 2\sqrt{w^\eps} \frac{\derp{\psi}p^\eps}{p^\eps}\derp{\psi} r^\eps.
 \end{eqnarray*}
 We recognise the first term of $\sqrt{w^\eps}\derp{\psi}^2 r^\eps$ in (\ref{dyr}), and we once again use \eqref{identity-pq} to interpret $\derp{\psi}p^\eps$ in the second term.
   As a result, we have the following equation on $r^\eps$:
  \begin{eqnarray}
  \lambda_0 \nu \derp{y}r^\eps &=& \nu^2\sqrt{w^\eps}\derp{\psi}^2 r^\eps + \frac{2}{p^\eps}\left[r^\eps-(\psi^0)'(y)\right] + 2\lambda_0\nu r^\eps \derp{\psi} r^\eps - \frac{4(\psi^0(y)-\psi-\eps)}{p^\eps} \derp{\psi}r^\eps\nonumber\\ \label{reqn}
  &&+ 2 \lambda_0 \chi' \chi'' r^\eps + 8 \frac{\chi' \chi''}{\nu p^\eps} \left( \psi^0(y)-\psi-\eps\right).
  \end{eqnarray}
 This equation has a maximum and a minimum principle in zone $III$. Indeed, let us consider $(\psi_m,y_m)$ a minimum of $r^\eps$ in zone $III$ deprived from  its parabolic boundary. Then we have $\derp{y}r^\eps\leq 0$, $\derp{\psi}r^\eps = 0$,
  $\sqrt{w^\eps}\derp{\psi}^2 r^\eps \geq 0$, and therefore
 $$ \frac{2}{p^\eps}\left[r^\eps - (\psi^0)'(y_m)\right]+ 2 \lambda_0 \chi' \chi'' r^\eps + 8 \frac{\chi' \chi''}{\nu p^\eps} \left( \psi^0(y_m)-\psi_m-\eps\right) \leq 0 .$$ 
 We can always choose $\delta$ so that $1/|p^\eps|\geq |2 \lambda_0 \chi' \chi'' |$ in zone $III$, and therefore the second term is absorbed by the first one.
 The last term is bounded according to Proposition \ref{lem:p-eps}, and therefore we obtain a lower bound on $r^\eps$ at a point of interior minimum since $p^\eps$ is negative:
 \begin{eqnarray*} r^\eps(\psi_m,y_m) & \geq & 2(\psi^0)'(y_m) - 8\frac{|\chi'(y_m)\chi''(y_m)|}{\nu(y_m)} (\psi^0(y_m)-\psi_m-\eps) \\
   & \geq & \inf_{y\in [0, Y]}\left[ 2(\psi^0)'(y) - 8\frac{|\chi'(y)\chi''(y)|}{\nu(y)} \psi^0(y)\right].
 \end{eqnarray*}
 
 Likewise, there is an upper bound on the values of interior maxima: in the same way, at a maximum point $(\psi_M,y_M)$, we have
 $$ \frac{2}{p^\eps}\left[r^\eps - (\psi^0)'(y_M)\right]+ 2 \lambda_0 \chi' \chi'' r^\eps + 8 \frac{\chi' \chi''}{\nu p^\eps} \left( \psi^0(y_M)-\psi_M-\eps\right) \geq 0, $$
 $$ \mathrm{so}~~ r^\eps(\psi_M,y_M) \leq \max_{y\in[0,Y]} \left(2 (\psi^0)'(y) + 8\delta \frac{|\chi'(y)\chi''(y)|}{1+\chi'(y)^2} \right). $$
 
 It remains to examine the boundary conditions for $r^\eps$ in zone $III$. Thanks to the compatibility conditions \eqref{compvml} and \eqref{compvmr}, equation \eqref{vma} is valid up to the boundary, and we have
 $$ \lambda_0 \nu(0) q^\eps|_{y=0} = \nu(0)^2\sqrt{w^\eps_0}(w^\eps_0)'' + 2(\psi^0(0)-\psi-\eps), $$
 which, given the compatibility condition (\ref{compvmr}), becomes, as $\psi \to \psi^0(0)-2\eps$,
 $$ \lambda_0 \nu(0) q^\eps|_{y=0} \sim 2(\nu(0)^2a(0)^3+1)(\psi^0(0)-\psi-\eps) = 2 \lambda_0 \nu(0) (\psi^0)'(0) a(0)^2 (\psi^0(0)-\psi-\eps),  $$
 since $a(0)$ solves $P_0(a(0))=0$. As a result, because of the initial condition on $p^\eps$, seen in (\ref{dpsiw}), $r^\eps_{|y=0}$ is bounded in zone $III$.

 To get the boundary condition on $\{\psi=\psi^0(y) -2\eps\}$, we differentiate with respect to $y$ the identity
 $$
 w^\eps(\psi^0(y)-2\eps,y)= w_0(\psi^0(0)-\eps) \frac{a(y)^2}{a(0)^2},\quad \forall y\in (0,Y).
 $$
 We get the relation
 $$ 2a'(y)a(y)\frac{w_0(\psi^0(0)-\eps)}{a(0)^2} = [(\psi^0)'(y)p^\eps + q^\eps]|_{\psi = \psi^0(y)-2\eps}. $$
 We proved that $-E_-(y)\eps\leq p^\eps_{\psi=\psi^0-2\eps} \leq -E_+(y)\eps$ on the boundary, while $w_0(\psi^0(0)-\eps)= O(\eps^2)$, so the left-hand side is of lower order. We obtain
 $$ r^\eps|_{\psi = \psi^0(y)-2\eps} = (\psi^0)'(y) + \Oc(\eps). $$
 Finally, we know by the Schauder estimates that $r^\eps$ is uniformly bounded along the curve $\{\psi = \psi^0(y)-\delta\}$.
 
 Combining the interior extremum bounds and the boundary conditions, we get that $r^\eps$ is uniformly bounded in zone $III$, which ends the proof of (\ref{alterdy}) and \propref{alterd}. 
 
\end{proof}

\section{Solution to the transformed equation and uniqueness}
\label{ssec:conclu}

We come to the final stage of the proof of Proposition \ref{prop:exvm}. We first tackle the existence: for every set $K$ that is compact in $D$, which we recall is
$$ D = \{(\psi,y)\in\Rr^2 ~|~ 0<y<Y \mathand{and} 0< \psi < \psi^0(y) \}, $$
 we have uniform Lipschitz bounds for the family of solutions $(w^\eps)_{\eps\leq \eps_0}$,
 so we can extract a subsequence that converges uniformly to $w$ by the Ascoli theorem. Moreover, the parabolic regularity gives us
 convergence of the derivatives on $K$. The behaviour of the sub- and super-solutions $\Phi^\pm_\eps$ yield continuity of $w$ and its
 first derivatives on $\overline{D}$, so $w$ vanishes linearly at $\psi=0$ with a positive derivative there, as per the blanket sub-solution,
 and quadratically along the curve $\{\psi=\psi^0(y)\}$, with the precise behaviour being shown in Proposition \ref{altersols}.
\gap

It remains to show the uniqueness of classical solutions of \eqref{vm} with the properties listed in Proposition \ref{prop:exvm}. The arguments are similar to Theorem 2.1.15 in \cite{OSbook}. In fact, we prove uniqueness in a slightly larger class, namely
\begin{propo}\label{prop:uniqueness-vm}
Assume that $w_1, w_2$ are two classical solutions of \eqref{vm} such that for $i=1,2$
$$
\begin{aligned}
\inf_{y\in [0,Y]}\p_\psi w_i\vert_{\psi=0}>0,\\
w_i(\psi, y)\sim a(y) (\psi^0(y)-\psi)^2 \text{ for }\psi\text{ close to } \psi^0(y),\\
\|\sqrt{w_i} \p_{\psi\psi } w_i\|_\infty\leq C_0
\end{aligned}
$$
for some positive constant $C_0$. Then $w_1=w_2$.

\end{propo}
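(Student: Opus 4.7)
The plan is to follow the approach of Theorem 2.1.15 in \cite{OSbook}: form the difference $W := w_1 - w_2$, derive a linear parabolic equation for $W$ with vanishing parabolic boundary data, and conclude $W \equiv 0$ via a comparison-principle argument. Subtracting the two instances of \eqref{vm} and writing
$$
\sqrt{w_1}\,\p_\psi^2 w_1 - \sqrt{w_2}\,\p_\psi^2 w_2 = \sqrt{w_1}\,\p_\psi^2 W + \frac{\p_\psi^2 w_2}{\sqrt{w_1}+\sqrt{w_2}}\,W,
$$
the function $W$ satisfies
$$
\lambda_0 \nu(y)\,\p_y W - \nu(y)^2 \sqrt{w_1}\,\p_\psi^2 W = c(\psi,y)\,W, \qquad c := \frac{\nu^2 \,\p_\psi^2 w_2}{\sqrt{w_1}+\sqrt{w_2}},
$$
with $W = 0$ on the parabolic boundary $\{y=0\}\cup\{\psi=0\}\cup\{\psi=\psi^0(y)\}$.

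The hypotheses of the proposition yield two competing estimates at the lateral boundaries. The bound $\|\sqrt{w_i}\,\p_{\psi\psi}w_i\|_\infty\leq C_0$, together with the asymptotics $w_i\sim c_i(y)\psi$ at $\psi=0$ (with $c_i(y)>0$ arising from $\inf_y\p_\psi w_i|_{\psi=0}>0$) and $w_i\sim a(y)(\psi^0(y)-\psi)^2$ at $\psi=\psi^0(y)$, yields $|c(\psi,y)| \leq K/\psi$ near $\psi=0$ and $|c(\psi,y)|\leq K/(\psi^0(y)-\psi)^2$ near $\psi=\psi^0(y)$. Simultaneously, the same asymptotics give $|W(\psi,y)|\leq K\psi$ near $\psi=0$ and $|W(\psi,y)|\leq K(\psi^0(y)-\psi)^2$ near $\psi=\psi^0(y)$ (in fact $W = o((\psi^0-\psi)^2)$, since both $w_i$ share the prefactor $a(y)$). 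The product $cW$ is therefore uniformly bounded on $\overline{D}$, and this matching of vanishing rates is the cornerstone of the argument.

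To exploit this compensation, I would introduce the rescaled unknown $V := W/\Omega$ with a positive weight $\Omega(\psi,y):= e^{-\alpha y}\,\psi\,(\psi^0(y)-\psi)^2$, parametrised by $\alpha>0$ to be chosen. Since the vanishing rates of $\Omega$ at the lateral boundaries match those of $W$, the function $V$ extends to a bounded continuous function on $\overline D$ that vanishes on the parabolic boundary. A direct computation turns the equation for $W$ into
$$
\lambda_0\nu\,\p_y V - \nu^2\sqrt{w_1}\,\p_\psi^2 V - 2\nu^2 \sqrt{w_1}\,\frac{\p_\psi\Omega}{\Omega}\,\p_\psi V - B(\psi,y)\,V = 0,
$$
with $B := c + \nu^2\sqrt{w_1}\,\p_\psi^2\Omega/\Omega - \lambda_0\nu\,\p_y\Omega/\Omega$. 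On any sub-domain $D^\eta$ at distance $\eta>0$ from the lateral boundaries, the diffusion coefficient $\nu^2\sqrt{w_1}$ is bounded below and \lemref{comparl} applies; comparing $V$ successively with the trivial super- and sub-solutions $0$ on $D^\eta$, and passing to the limit $\eta\to 0$ using the uniform bound $\sup_{\overline D}|V|<+\infty$, yields $V\equiv 0$ and hence $w_1\equiv w_2$.

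The principal technical obstacle is verifying that $\alpha$ can be chosen (and, if necessary, the exponents in $\Omega$ refined or the argument localised on a left-boundary zone and a right-boundary zone patched together) so that the zeroth-order coefficient $B$ is bounded from above on $\overline D$: the singular contribution of $c$ has indeterminate sign and must be absorbed by the positive constant contribution $\alpha\lambda_0\nu + \Oc(1)$ coming from $-\lambda_0\nu\,\p_y\Omega/\Omega$ together with the singular term $\nu^2\sqrt{w_1}\,\p_\psi^2\Omega/\Omega$, which is of order $1/\sqrt{\psi}$ near $\psi=0$ and of order $1/(\psi^0(y)-\psi)$ near $\psi=\psi^0(y)$. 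The rest of the argument is a careful book-keeping of these asymptotic matchings together with the standard application of \lemref{comparl}.
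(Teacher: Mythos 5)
Your cornerstone observation --- that $c:=\nu^2\p_\psi^2 w_2/(\sqrt{w_1}+\sqrt{w_2})$ is singular at both lateral boundaries but the product $cW$ is bounded because $W=w_1-w_2$ vanishes at exactly the compensating rates --- is correct, and it is also the heart of the paper's proof. But the way you propose to exploit it does not close. Dividing by the weight $\Omega$ does not remove $c$ from the zeroth-order coefficient: $B$ still contains $c$, and the extra terms generated by $\Omega$ are multiplied by the degenerate diffusion $\nu^2\sqrt{w_1}$, which vanishes like $\sqrt{\psi}$ and like $\psi^0(y)-\psi$. Hence, as your own accounting shows, they are only $\Oc(\psi^{-1/2})$ near $\psi=0$ and $\Oc((\psi^0-\psi)^{-1})$ near $\psi=\psi^0(y)$, whereas $c$ can be as large as $+K/\psi$ and $+K/(\psi^0-\psi)^2$ respectively (only $\sqrt{w_i}\p_\psi^2 w_i$ is assumed bounded, so $\p_\psi^2 w_2$ may blow up like $(\psi^0-\psi)^{-1}$). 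So $B$ cannot be made bounded above: near the right-hand boundary no power-type weight can help, since any gain from $\p_\psi^2\Omega/\Omega$ is capped at order $(\psi^0-\psi)^{-1}$ after multiplication by $\sqrt{w_1}$ --- and with the exponent $2$ you chose, $\nu^2\sqrt{w_1}\p_\psi^2\Omega/\Omega\sim +2\nu^2 a(y)/(\psi^0-\psi)$ even has the unfavourable sign. There are two further problems: with the factor $\psi^1$ in $\Omega$, $V=W/\Omega$ does \emph{not} vanish on $\{\psi=0\}$ (it tends to $\p_\psi W(0,y)$ divided by $\psi^0(y)^2e^{-\alpha y}$, which is nonzero in general, since the two solutions need not have equal slopes); and comparing $V$ with $0$ on the truncated domain $D^\eta$ requires a sign for $V$ on the lateral boundaries of $D^\eta$, which is precisely what you do not have, so the limit $\eta\to 0$ does not follow from the uniform bound on $V$ alone.

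The boundedness of $cW$ has to be used as a \emph{source}, not through the coefficient. The paper replaces $c$ by a truncated coefficient $F_\tau$ equal to $c$ for $\tau\le\psi\le\psi^0(y)-\tau$ and to $0$ near the lateral boundaries; then $F_\tau$ is bounded (so the comparison/minimum principle with an $e^{\kappa y}$ rescaling, $\kappa>\max|F_\tau|/(\lambda_0\nu)$, is available), and the equation for $W$ reads $\Nc W=(c-F_\tau)W$, whose right-hand side is $0$ in the middle and bounded by some $m$ near the boundaries, thanks precisely to your observation (phrased in the paper as boundedness of $(\sqrt{w_1}-\sqrt{w_2})\p_\psi^2 w_2$, using $\|\sqrt{w_i}\p_\psi^2 w_i\|_\infty\le C_0$ and the boundedness of $w_1/w_2$). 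One then dominates this bounded source, for each fixed $\mu>0$, by the barrier $\mu\tilde\Phi^+$ built from the profiles $A^+\psi^{4/3}+B^+\psi$ and $(\psi^0-\psi)^{5/6}$, whose image under $\Nc$ blows up like $\mu k\,\psi^{-1/6}$ and $\mu k\,(\psi^0-\psi)^{-1/6}$ near the boundaries; choosing $\tau$ small depending on $\mu$ gives $\Nc(\mu\tilde\Phi^+\pm W)\ge 0$, hence $|W|\le\mu\tilde\Phi^+$, and letting $\mu\to 0$ yields $w_1=w_2$. The mild blow-up $\psi^{-1/6}$, $(\psi^0-\psi)^{-1/6}$ only has to beat a \emph{bounded} quantity, which is why this works where the absorption of $c$ itself in your scheme cannot.
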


\begin{proof}
We  have
\begin{equation} \lambda_0\nu\derp{y}(w_1-w_2) - \nu^2\sqrt{w_1}\derp{\psi}^2(w_1-w_2) - \frac{\nu^2}{\sqrt{w_1}+\sqrt{w_2}}\derp{\psi}^2w_2 (w_1-w_2) = 0 . \label{diffeq} \end{equation}
Let us first derive some bounds on
$$ \frac{1}{\sqrt{w_1}+\sqrt{w_2}} \derp{\psi}^2 w_2 (w_1-w_2)= (\sqrt{w_1}- \sqrt{w_2})\derp{\psi}^2 w_2 $$
near the boundaries. We first notice that for each $i$, $\sqrt{w_i} \p_\psi^2 w_i$ is bounded in $D$ by assumption.
 Furthermore, according to the inequalities satisfied by $w_1$ and $w_2$ near the left-hand and right-hand boundaries, it is easily checked that $w_1/w_2$ is also bounded (and bounded away from zero) in the vicinity of these boundaries.
 The boundedness of $(\sqrt{w_1}-\sqrt{w_2})\derp{\psi}^2 w_2$ follows.
 
However, since $(\sqrt{w_1}+\sqrt{w_2})^{-1}\derp{\psi}^2 w_2$ is not expected to be bounded near the boundaries, we cannot conclude immediately with a Gr\"onwall-type argument.
 To reduce to a case where this coefficient would be bounded, we consider the linear operator
\begin{eqnarray*} \Nc f & = &  \lambda_0\nu\derp{y}f - \nu^2\sqrt{w_1}\derp{\psi}^2 f - F_\tau f, \\
\mathand{where} F_\tau(\psi,y) & = & \left\{ \begin{array}{ll} \frac{\nu^2}{\sqrt{w_1}+\sqrt{w_2}}\derp{\psi}^2 w_2 & \mathand{if} \psi\geq \tau \mathand{and} \psi\leq \psi^0(y)-\tau \\
0 & \mathand{otherwise,} \end{array} \right. \end{eqnarray*}
with $\tau$ to be chosen shortly. Let us define a crude super-solution of the equation $\Nc f=0$, namely
$$ \tilde{\Phi}^+ (\psi,y) = e^{\alpha y}\Big[h_{I} (A^+\psi^{4/3}+B^+\psi) + h_{II} M^+ + h_{III} (\psi^0-\psi)^{5/6}\Big], $$
with $A^+$, $B^+$, $M^+$ and $\delta$ so that the function in large brackets is larger than $w^\eps$.
 This is crude in the sense that its decay at $\psi=\psi^0(y)$ is slower than $\Oc((\psi^0-\psi)^2)$. Using the sub- and super-solutions from \propref{altersols} that surround $w_1$, we see that
$$ \Nc \tilde{\Phi}^+ \geq k\psi^{-1/6} \mathand{and} \Nc \tilde{\Phi}^+ \geq k(\psi^0-\psi)^{-1/6} $$
in zones $I$ and $III$ respectively (adjust $\delta$ if needed), and then choose $\alpha$ large enough to ensure $\Nc \tilde{\Phi}^+> 0$ in zone $II$.
 Because of transition terms, we may have to act locally in $y$, but by choosing the parameters large enough, we can re-use the ``blanket'' strategy to cover all of $D$.
\gap

Let $\mu>0$, and consider the functions
$$ W^\pm = \mu\tilde{\Phi}^+ \pm (w_1 - w_2). $$
We will show that both of these functions are non-negative, hence, for any $\mu>0$,
$$ |w_1-w_2| \leq \mu\tilde{\Phi}^+, $$
proving that $w_1=w_2$.

We have
\begin{eqnarray*} \Nc W^\pm & = & \mu \Nc \tilde{\Phi}^+ \pm \Nc (w_1-w_2) \\
 & = & \left\{ \begin{array}{ll} \mu \Nc \tilde{\Phi}^+ & \mathand{if} \tau \leq \psi \leq \psi^0(y)-\tau \\
 \mu \Nc \tilde{\Phi}^+ \pm \frac{\nu^2}{\sqrt{w_1}+\sqrt{w_2}} \derp{\psi}^2 w_2 (w_1-w_2) & \mathand{otherwise} \end{array} \right. \\
 & \geq & \left\{ \begin{array}{ll} 0 & \mathand{if} \tau\leq \psi \leq \psi^0(y)-\tau \\
 \mu k \psi^{-1/6} - m & \mathand{if} 0\leq \psi \leq \tau \\
 \mu k (\psi^0(y)-\psi)^{-1/6} - m & \mathand{if} \psi^0(y)-\tau \leq \psi \leq \psi^{0}(y), \end{array} \right. \\ & \geq & 0 \end{eqnarray*}
for some positive constant $m$ independent of the parameters, if $\tau$ is small enough ($\tau$ depends on $\mu$).

If $w_1\neq w_2$ at a certain point, then, at that point, either $W^+$ or $W^-$ is negative for $\mu$ small enough. First of all, we note that it must be an interior point of $D$,
 since $w_1=w_2=0$ on the boundary, and $\tilde{\Phi}^+$ is zero on the left- and right-hand boundaries, and is positive at $y=0$.
 Next, $W^\pm$ can be negative if and only if $R^\pm = W^\pm e^{\kappa y}$ can be, for any $\kappa >0$. We quickly compute that
\begin{eqnarray*} \Nc W^\pm & = & e^{-\kappa y} \Big[ \Nc R^\pm - \lambda_0 \nu \kappa R^\pm\Big] \\
 & = & e^{-\kappa y} \Big[ \lambda_0 \nu \derp{y}R^\pm - \nu^2 \sqrt{w_1} \derp{\psi}^2 R^\pm - (F_\tau + \lambda_0 \nu \kappa) R^\pm \Big], \end{eqnarray*}
and recall that this quantity is positive. But, if $R^\pm$ has a negative minimum, and $\lambda_0 \nu \kappa$ is larger than $\max |F_\tau|$, we see that $\Nc W^\pm \leq 0$ at that minimum.
 So $R^\pm$ cannot be negative, and the uniqueness for problem (\ref{vm}) is proved. $\square$

\end{proof}

We then infer uniqueness for the Prandtl equation in an appropriate class.
\begin{coro}\label{cor:uniqueness}
Consider a Lipschitz solution $(u,v)$ of \eqref{pr} such that $\p_{\xi \xi} v$ is bounded in $[0,Y]\times \mathbb R_+$, and such that there exist $\xi_0>0$, $m,M>0$ such that
$$
m\xi\leq v(\xi, y)\leq M \xi\quad \forall \xi \in [0,\xi_0],\ \forall y\in [0,Y]
$$
and
$$
v(\xi,y)\sim a(y) (\psi^0(y)-\xi)\quad\text{as }\xi\to \infty.
$$

Then $(u,v)$ is unique.
\end{coro}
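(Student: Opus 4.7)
The plan is to reduce the uniqueness claim for \eqref{pr} to the uniqueness result already established in von Mises variables (Proposition \ref{prop:uniqueness-vm}), by pulling each candidate Prandtl solution back through the transform \eqref{cgt-var}. Given $(u,v)$ as in the statement, set $\psi(\xi,y):=\int_0^\xi v(s,y)\,ds$. The hypothesis $v\geq m\xi$ near $\xi=0$, the strict positivity of $v$ in the interior (implied by the one-sided bounds and by $v\sim a(y)(\psi^0(y)-\psi)$ at infinity), and the asymptotic at $\xi=+\infty$ together imply that $\xi\mapsto\psi(\xi,y)$ is a smooth strictly increasing bijection of $\Rr_+$ onto $[0,\psi^0(y))$. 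Hence the function $w(\psi,y):=v^2(\xi(\psi,y),y)$ is well-defined on $D$, and the chain rule identities \eqref{chainrule} show that it is a classical solution of \eqref{vm}.

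Given two solutions $(u_1,v_1)$, $(u_2,v_2)$ fulfilling the hypotheses of the corollary, I would form the associated $w_1,w_2$ and then verify the three hypotheses of Proposition \ref{prop:uniqueness-vm} for each $i$. First, $\p_\psi w_i=2\p_\xi v_i$ by \eqref{chainrule}, and evaluating at $\xi=0$ together with the lower bound $v_i(\xi,y)\geq m\xi$ forces $\p_\psi w_i\vert_{\psi=0}\geq 2m>0$ uniformly in $y$. Second, squaring the asymptotic $v_i\sim a(y)(\psi^0(y)-\psi)$ yields $w_i\sim a(y)^2(\psi^0(y)-\psi)^2$ as $\psi\to\psi^0(y)$. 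Third, again by \eqref{chainrule} we have $\sqrt{w_i}\,\p_{\psi\psi}w_i=2\p_{\xi\xi}v_i$, which is bounded by assumption. Proposition \ref{prop:uniqueness-vm} then delivers $w_1=w_2$ on $D$.

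The final step is to push this equality back through the inverse change of variables \eqref{reverse-cgt-var}. Since the formula $\xi=\int_0^\psi w(s,y)^{-1/2}\,ds$ depends only on $w$, the maps $(\xi,y)\mapsto\psi_i(\xi,y)$ coincide for $i=1,2$; consequently $v_i=\p_\xi\psi_i$ and $u_i=-\p_y\psi_i$ coincide as well, which proves the corollary.

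The routine part of the argument is the verification of the three hypotheses above. The main subtlety—and the point one should double-check—is that the limited regularity assumed here (Lipschitz $(u,v)$ with only $\p_{\xi\xi}v$ bounded) does transfer to enough regularity on $w_i$ to legitimately invoke the comparison principle of Lemma \ref{comparl} inside the proof of Proposition \ref{prop:uniqueness-vm}; this uses the fact that on any compact subset of the interior of $D$, $w_i$ is bounded away from zero and the parabolic equation \eqref{vm} is uniformly parabolic, so the only delicate behaviour is at the two lateral boundaries, which is precisely what the three hypotheses of Proposition \ref{prop:uniqueness-vm} are designed to control.
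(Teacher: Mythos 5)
Your proposal is correct and follows essentially the same route as the paper: both arguments pass each candidate Prandtl solution through the von Mises transform, check the hypotheses of Proposition \ref{prop:uniqueness-vm} (positive $\p_\psi w$ at $\psi=0$, quadratic vanishing at $\psi=\psi^0(y)$, boundedness of $\sqrt{w}\,\p_{\psi\psi}w$ from the bound on $\p_{\xi\xi}v$), and conclude via the uniqueness in von Mises variables before inverting the change of variables. Your write-up is in fact slightly more explicit than the paper's, which simply invokes the equivalence between the two formulations.
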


\begin{proof}
We can see in our computations in section \ref{ssec:transfo} that there is an equivalence
 between solutions $v$ of the Prandtl equation (\ref{full-pr}) that are Lipschitz-class, with a positive derivative with respect to $\xi$ at $\xi=0$, and exponentially decaying as $\xi\rightarrow+\infty$ on one hand,
 and solutions $w$ of the transformed equation (\ref{vm}) that are Lipschitz-class, with positive $\psi$-direction derivative at $\psi=0$, and vanishing quadratically at $\psi=\psi^0(y)$ on the other. Additionally, the boundedness assumption on $\p_{\xi \xi} v$ means that $\sqrt{w} \p_{\psi\psi } w$ is bounded.

 Hence, if there are two solutions to the Prandtl equation satisfying the assumptions of Corollary \ref{cor:uniqueness}, they yield two solutions, in the sense of \propref{prop:uniqueness-vm}, of (\ref{vm}) by the von Mises transform, and these have just been shown to be equal.
 This ends the proof of Corollary \ref{cor:uniqueness} and thereby of Theorem \ref{exist}.
\end{proof}

\subsection*{Acknowledgements.}

This project has received funding from the European Research Council (ERC) under the European Union's Horizon 2020 research and innovation program Grant agreement No 637653, project BLOC ``Mathematical Study of Boundary Layers in Oceanic Motion''. The authors have also been partially funded by the ANR project Dyficolti ANR-13-BS01-0003-01.

\begin{small}
\bibliography{prbib}
\bibliographystyle{abbrv}
\end{small}

\end{document}